\documentclass{article}

\usepackage[hidelinks,hypertexnames=false]{hyperref}
\hypersetup{
  pdftitle={Exact oracle complexity for function-value suboptimality under weighted initial conditions},
  pdfauthor={Yoel Drori}
}
\usepackage[margin=1in]{geometry}
\usepackage{amsmath,amsthm,amssymb,mathtools}
\usepackage{mathrsfs}
\usepackage{enumitem}
\usepackage[numbers,sort&compress]{natbib}
\usepackage{microtype}
\usepackage{booktabs}
\usepackage{algorithm}
\usepackage{algpseudocode}
\usepackage{float}

\allowdisplaybreaks
\numberwithin{equation}{section}

\newtheorem{theorem}{Theorem}[section]
\newtheorem{lemma}[theorem]{Lemma}
\newtheorem{proposition}[theorem]{Proposition}
\newtheorem{corollary}[theorem]{Corollary}

\theoremstyle{definition}
\newtheorem{definition}[theorem]{Definition}

\theoremstyle{remark}
\newtheorem{remark}{Remark}[section]

\newcommand{\real}{\mathbb{R}}
\newcommand{\Rx}{R_x}
\newcommand{\Rf}{R_f}
\newcommand{\oracle}{\mathcal{O}}
\newcommand{\risk}[2]{\mathscr{R}_{#1}\bigl(#2\bigr)}
\newcommand{\Pcls}[3]{\mathcal{P}^{\mu,L}_{#1,\,#2}(\real^{#3})}
\newcommand{\Rg}{R_g}
\newcommand{\finstances}[1]{\Pcls{x}{\Rx}{#1}}

\newcommand{\Fclass}{\mathcal{F}_{\mu,L}}
\newcommand{\ip}[2]{\left\langle #1,#2\right\rangle}
\newcommand{\norm}[1]{\left\lVert #1\right\rVert}
\newcommand{\conv}{\operatorname{conv}}
\newcommand{\Span}{\operatorname{span}}
\newcommand{\Proj}{\operatorname{P}}
\newcommand{\dist}{\operatorname{dist}}
\newcommand{\argminop}{\operatorname*{argmin}}
\providecommand{\doi}[1]{\href{https://doi.org/#1}{\nolinkurl{https://doi.org/#1}}}

\begin{document}

\title{Exact oracle complexity for function-value suboptimality
under weighted initial conditions}

\author{Yoel Drori\thanks{Google Research.}}

\date{}

\maketitle

\begin{abstract}
We determine the exact worst-case function-value suboptimality after $N$ first-order oracle calls on $L$-smooth, $\mu$-strongly convex functions under a nonnegative weighted combination of the initial squared distance, function-value suboptimality, and squared gradient norm. Excluding the case where no strict improvement in function-value suboptimality can be guaranteed, the exact deterministic minimax risk in dimension $d\geq2N+1$ is characterized by the unique solution of a single scalar equation involving an $N$-step recurrence. The proof constructs an explicit hard instance for the lower bound and a constant-memory first-order method, ITEM-w, whose worst-case performance matches this lower bound. On the standard initial conditions, the result recovers the exact convex bound attained by the Optimized Gradient Method of Kim and Fessler, and establishes the minimax optimality of the recently introduced ITEM-f method of Kim, Ryu, and Das Gupta for the initial function-value condition.
\end{abstract}

\medskip
\noindent\textbf{Keywords:} first-order methods; oracle complexity; smooth
convex optimization; weighted initial conditions; exact minimax risk; zero-chains

\smallskip
\noindent\textbf{Mathematics Subject Classification:} 68Q25; 90C25; 90C30

\section{Introduction}

Under the oracle-complexity framework of Nemirovski and Yudin, optimization
problems are studied in terms of the information that must be obtained from an
oracle to guarantee a prescribed accuracy~\cite{NemirovskiYudin1983}.  For
quadratic minimization, Krylov-information arguments yield exact finite-horizon
minimax bounds, attained by the conjugate gradient (CG)
method~\cite{Nemirovsky1991Krylov,Nemirovsky1992Operator}.  For the broader
smooth convex and smooth strongly convex classes, classical oracle-complexity
results establish the accelerated rates up to constant
factors~\cite{GuzmanNemirovski2015,ArjevaniShalevShwartzShamir2016}.
Determining the exact finite-horizon complexity in these classes likewise
requires both sides of the problem: a first-order method with a sharp
worst-case guarantee and a matching hard instance showing that no method can
improve it.

For general smooth convex and strongly convex functions, exact
finite-horizon characterizations are known only for selected criteria and
algorithmic classes.  In the smooth convex setting, the Optimized Gradient
Method (OGM)~\cite{KimFessler2016} attains the exact worst-case function-value
guarantee, with the matching oracle-complexity lower bound established
in~\cite{Drori2017Exact}.  In the strongly convex setting, Drori and
Taylor developed refined oracle-complexity lower bounds for both function-value
suboptimality and distance to the solution~\cite{DroriTaylor2022Oracle}; for
the distance criterion, the Information-Theoretic Exact Method (ITEM) attains
the lower bound exactly~\cite{taylor2023optimal}.  The same work also obtained, numerically, the method now called ITEM-f for
the function-value criterion.  Kim, Ryu, and Das Gupta recently gave an
analytic description and convergence proof of this method~\cite[Section~4.2]{KimRyuDasGupta2026}.  Thus, while the
distance criterion admits an exact finite-horizon minimax characterization in
the strongly convex setting, exactness for function-value suboptimality
remained open.

This paper closes that gap and places the result in a broader framework of
weighted initial conditions.  We determine the exact finite-horizon minimax
function-value suboptimality under nonnegative weighted combinations of the
initial squared distance, function-value suboptimality, and squared gradient
norm.  The exact coefficient is characterized by a single scalar matching
equation involving a backward recurrence.  The same recurrence produces both
an explicit hard instance and a matching constant-memory first-order method,
ITEM-w.

On the lower-bound side, the construction refines that of Drori and
Taylor~\cite{DroriTaylor2022Oracle} using the zero-chain formalism of Carmon,
Duchi, Hinder, and Sidford~\cite{CarmonDuchiHinderSidford2020}.  Its additional
freedom decouples the minimizer coordinates from the directions revealed to the
method, which is what permits exact matching for general weighted initial conditions.
On the upper-bound side, the recurrence determines ITEM-w, whose analysis uses a
potential function and smooth strongly convex interpolation.  For the pure initial
function-value condition, the exact coefficient agrees with the ITEM-f contraction
factor, so the ITEM-f upper guarantee is matched by an oracle-complexity lower bound
and its exact finite-horizon minimax optimality follows.  Together, these
constructions yield an exact finite-horizon minimax characterization for the
weighted problem class.

\paragraph{Paper organization.}
Section~\ref{S:problem} formulates the weighted problem and states the main result; Section~\ref{S:generic} develops the generic zero-chain lower-bound framework, and Sections~\ref{S:scalar-risk-coefficient}--\ref{S:lower-realization} connect it to the scalar recurrence and the resulting hard instance. Section~\ref{S:matching-method} uses the same recurrence to construct the ITEM-w method with the matching upper bound, and Section~\ref{S:weighted-exact} combines the two constructions. The appendices collect deferred proofs and specializations of the general bound.

\section{Problem class and main results}\label{S:problem}

For $0\leq\mu<L$, let $\Fclass(\real^d)$ be the class of differentiable
functions $f:\real^d\to\real$ that are $\mu$-strongly convex and have an
$L$-Lipschitz continuous gradient.  A \emph{problem instance} is a pair
$(\oracle_f,x_0)$, where $f\in\Fclass(\real^d)$ attains its minimum,
$f_*:=\min f$, $\oracle_f(x)=(f(x),\nabla f(x))$ is the first-order oracle
of $f$, and $x_0$ is the prescribed initial point.
The three standard problem classes are
the initial-distance class
\[
    \Pcls{x}{R}{d}:=\bigl\{(\oracle_f,x_0):\;
      f\in\Fclass(\real^d),\;
      \norm{x_0-x_*}\leq R
        \text{ for some }x_*\in\argminop f\bigr\},
\]
the initial function-value suboptimality class
\[
    \Pcls{f}{R}{d}:=\bigl\{(\oracle_f,x_0):\;
      f\in\Fclass(\real^d),\;
      f(x_0)-f_*\leq R\bigr\},
\]
and the initial-gradient class
\[
    \Pcls{g}{R}{d}:=\bigl\{(\oracle_f,x_0):\;
      f\in\Fclass(\real^d),\;
      \norm{\nabla f(x_0)}\leq R\bigr\}.
\]
To treat these three initial conditions within a common framework, we introduce
a class based on their nonnegative weighted combination.  For a nonzero
nonnegative weight vector $w=(w_x,w_f,w_g)$ and $R>0$, define
\begin{equation}\label{D:mixed-class}
\begin{aligned}
 \mathcal P^{\mu,L}_{w,R}(\real^d):=\bigl\{(\oracle_f,x_0):\;&
 f\in\Fclass(\real^d),\ \text{and for some }x_*\in\argminop f,\\[-1mm]
 &w_x\norm{x_0-x_*}^2
   +\frac{2w_f}{L}\bigl(f(x_0)-f_*\bigr)
   +\frac{w_g}{L^2}\norm{\nabla f(x_0)}^2\leq R^2\bigr\}.
\end{aligned}
\end{equation}
Multiplying all weights by a scalar $s>0$ is equivalent to replacing the
budget $R$ by $R/\sqrt{s}$, so the weights may be normalized without loss of
generality.  We therefore normalize them to satisfy
\begin{equation}\label{D:weight-simplex}
    w_x,w_f,w_g\geq0,
    \qquad
    w_x+w_f+w_g=1.
\end{equation}
The three vertices of the weight simplex are denoted by
\[
    w^x:=(1,0,0),
    \qquad
    w^f:=(0,1,0),
    \qquad
    w^g:=(0,0,1).
\]
Thus
\[
    \mathcal P^{\mu,L}_{w^x,\Rx}(\real^d)=\Pcls{x}{\Rx}{d},
    \qquad
    \mathcal P^{\mu,L}_{w^f,\sqrt{2\Rf/L}}(\real^d)=\Pcls{f}{\Rf}{d},
    \qquad
    \mathcal P^{\mu,L}_{w^g,\Rg/L}(\real^d)=\Pcls{g}{\Rg}{d}.
\]

Let $\mathcal A_{N,d}$ be the deterministic first-order methods that may
depend on the parameters defining the problem class and on $(N,d)$, query
first at $x_0$, make at most $N$ oracle calls, and return an arbitrary function
of the history of oracle answers.
Finally, the \emph{minimax risk} is defined by
\begin{equation}\label{D:risk}
    \risk{N}{\mathcal P}
    :=\inf_{\mathcal M\in\mathcal A_{N,d}}
      \sup_{(\oracle_f,x_0)\in\mathcal P}
      \bigl(f(\mathcal M(\oracle_f,x_0))-f_*\bigr).
\end{equation}
Only the lower-bound construction requires $d\geq2N+1$; the scalar recurrence
is dimension-free, and the ITEM-w upper guarantee holds in every dimension.

\subsection{Scalar characterization of the minimax risk}
\label{S:main-scalar-coefficients}

This subsection introduces the two definitions needed to state the main
result, Theorem~\ref{T:intro-main}, which expresses the minimax risk of the
weighted class through a single scalar, the risk coefficient.  Both the hard
instance of Section~\ref{S:lower-realization} and the method ITEM-w of
Section~\ref{S:matching-method} are built from the same scalar sequence,
generated backward from the horizon by a two-dimensional recurrence.

\begin{definition}[Paired backward sequence]\label{D:backward-sequence}
Fix $N\geq1$ and $q\in[0,1)$, and write $\bar q=1-q$.  Given a radius
$r\in(0,1)$, set
\begin{equation}\label{D:backward-endpoint}
    a_N:=r,
    \qquad
    b_N:=\sqrt{\bar q}\,r.
\end{equation}
For $k=N,\ldots,2$, whenever the preceding quantities are well defined,
define the recurrence coefficients
\begin{equation}\label{D:backward-coupling}
\begin{aligned}
    c_k&:=\sqrt{(1-a_k)(1+\bar q+qa_k)},\\
    d_k&:=\bar q+qa_k,
\end{aligned}
\end{equation}
and propagate
\begin{equation}\label{D:sequence}
\begin{bmatrix}a_{k-1}\\[1mm]b_{k-1}\end{bmatrix}
=
\begin{bmatrix}
    d_k & c_k\\
    -qc_k & d_k
\end{bmatrix}
\begin{bmatrix}a_k\\[1mm]b_k\end{bmatrix}.
\end{equation}
The sequence
$((a_k,b_k))_{k=1}^N$ is the \emph{paired backward sequence associated
with $r$}.  It is \emph{ordered} if the recurrence is well defined through
$k=1$ and
\begin{equation}\label{E:sequence-order}
    1>a_1>\cdots>a_N>0,
    \qquad
    b_1>0.
\end{equation}
Where the dependence on $r$ is to be made explicit, we write $a_k(r)$ and
$b_k(r)$ for $1\leq k\leq N$.
We denote the set of radii associated with ordered sequences by
\[
    \mathcal O_N(q)
      :=\{r\in(0,1):((a_k(r),b_k(r)))_{k=1}^N\text{ is ordered}\}.
\]
\end{definition}

\begin{definition}[Risk coefficient]\label{D:risk-coefficient}
Fix $N\geq1$ and $q\in[0,1)$, write $\bar q=1-q$, and let
$w=(w_x,w_f,w_g)$ satisfy the simplex conditions~\eqref{D:weight-simplex}.
For $\eta\geq0$, set
\begin{equation}\label{D:risk-coefficient-induced-radius}
    r_{\eta,w}^2:=q+\eta\bigl(\bar q w_x-qw_f\bigr).
\end{equation}
Whenever $r_{\eta,w}^2>0$, let $r_{\eta,w}$ denote its positive square root.
We call $\eta$ a \emph{risk coefficient for $w$} if
$r_{\eta,w}\in\mathcal O_N(q)$ and the \emph{matching equation}
\begin{equation}\label{E:mixed-matching-equation}
    a_1
      +\eta(w_x+w_f)+\sqrt{\eta/\bar q}\,b_1 = 1
\end{equation}
holds, where $a_1,b_1$ are the first pair of the backward sequence associated
with $r_{\eta,w}$.
\end{definition}
At $\eta=0$ the matching equation reads $a_1=1$, which fails whenever
$r_{0,w}\in\mathcal O_N(q)$, since then $a_1<1$; hence every risk coefficient
is positive.

\begin{theorem}[Exact weighted minimax risk]\label{T:intro-main}
Fix $L>0$, $0\leq\mu<L$, $N\geq1$, put $q=\mu/L$, and let $w$ satisfy the simplex conditions~\eqref{D:weight-simplex}.  Assume that either $q>0$ or $w_x>0$.  For every
$R>0$ and every $d\geq2N+1$,
\begin{equation}\label{E:weighted-exact-minimax}
    \risk{N}{\mathcal P^{\mu,L}_{w,R}(\real^d)}
    =\frac{L}{2}\,\eta_N^\star(q,w)R^2,
\end{equation}
where $\eta_N^\star(q,w)$ denotes the unique risk coefficient characterized
above by Definition~\ref{D:risk-coefficient}.  The ITEM-w method associated
with this risk coefficient satisfies the same upper guarantee in every dimension.
\end{theorem}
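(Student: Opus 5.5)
The proof has three parts. First, the risk coefficient $\eta_N^\star(q,w)$ exists and is unique. Second, a hard instance in dimension $2N+1$ forces every method in $\mathcal A_{N,d}$ to incur suboptimality at least $\tfrac{L}{2}\eta R^2$. Third, ITEM-w, built from the same backward sequence, guarantees at most $\tfrac{L}{2}\eta R^2$ in every dimension. By homogeneity (scaling $f$ by $L$ and $x$ by $R$) we may assume $L=1$ and $R=1$.

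\textbf{Existence and uniqueness.} We study the scalar map $\eta\mapsto\Phi(\eta):=a_1+\eta(w_x+w_f)+\sqrt{\eta/\bar q}\,b_1-1$ on the set of $\eta\ge0$ with $r_{\eta,w}\in\mathcal O_N(q)$. The first step is to show that, as $r$ ranges over $\mathcal O_N(q)$ (an interval $(0,r_{\max})$), the maps $r\mapsto a_1(r)$ and $r\mapsto b_1(r)$ are continuous and increasing. We would prove this by induction along the recurrence~\eqref{D:sequence}, differentiating $c_k$ and $d_k$. We then consider $\eta\mapsto r_{\eta,w}$, which is affine in $r^2$.

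If $\bar q w_x-qw_f\ge0$, then $r_{\eta,w}$ increases with $\eta$. All terms of $\Phi$ are increasing, $\Phi(0)=a_1-1<0$, and $\Phi>0$ near the boundary where ordering fails (there $a_1\to1$ or the sum exceeds $1$). So there is exactly one root.

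If $\bar q w_x-qw_f<0$, then $r_{\eta,w}$ decreases with $\eta$. Here the hypothesis ``$q>0$ or $w_x>0$'' is needed to keep $r^2>0$ at the root. Monotonicity of $\Phi$ is then less clear. We would obtain uniqueness indirectly: any root yields both a lower bound and an upper bound equal to $\tfrac{L}{2}\eta$, so two roots would give two different values of the minimax risk, which is impossible. Existence would follow by continuity and the intermediate value theorem.

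\textbf{Lower bound.} We use the zero-chain framework of Section~\ref{S:generic}. The hard function is built as a smooth strongly convex interpolation: a Huber/max-type construction over $N+1$ points whose gradients lie in orthonormal directions $e_1,\dots,e_{N}$, together with extra coordinates for the minimizer. This gives $2N+1$ dimensions, decoupling $x_*$ from the revealed directions. The interpolation data (gradient magnitudes, function values, and the offset of $x_*$) are read off from $(a_k,b_k)$. The ordering conditions~\eqref{E:sequence-order} are exactly what is needed for the smooth strongly convex interpolation inequalities to hold. By a resisting-oracle/rotation argument, after $N$ queries the method only knows the span of $e_1,\dots,e_N$, and any point in $x_0+\Span$ has $f-f_*\ge\tfrac12\eta$. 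The matching equation~\eqref{E:mixed-matching-equation} then normalizes the weighted initial quantity to $R^2=1$: $a_1$ accounts for the gradient/function part and $b_1$ for the distance part.

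\textbf{Upper bound.} We define ITEM-w through step coefficients built from $(a_k,b_k,c_k,d_k)$. We then show that a potential
\[
\phi_k=\alpha_k\bigl(f(y_k)-f_*\bigr)+\beta_k\norm{z_k-x_*}^2+\gamma_k\norm{\nabla f(y_k)}^2
\]
is nonincreasing. The one-step decrease is a weighted sum of the interpolation inequalities for $\Fclass$, and the weights come from the recurrence. Finally, $\phi_0$ is bounded by the weighted initial condition, using the matching equation, and $\phi_N\ge f(y_N)-f_*$ up to the factor $\tfrac{1}{\eta}$.

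\textbf{Main obstacle.} The hardest step is verifying the nonnegativity of the one-step potential certificate, i.e.\ that the residual quadratic form is a negative sum of squares for a general weight vector $w$. We would first guess the certificate from the two endpoint cases (ITEM for $w^x$ and ITEM-f for $w^f$). We would then use the identity $c_k^2=(1-a_k)(1+\bar q+qa_k)$ to collapse the residual to a perfect square.
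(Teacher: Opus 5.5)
Your architecture matches the paper: existence, a lower bound valid for every method, an ITEM-w upper bound, and uniqueness because two risk coefficients would give two values of the same minimax risk. The uniqueness step is exactly the paper's. The genuine gap is the lower bound, which is where the new ingredient lies. A Huber/max-type interpolant of $N+1$ data triples lies in $\Fclass$ once the interpolation inequalities hold. That says nothing about the gradient at the other points of $S_{j-1}$ that a method may query, yet this is precisely the zero-chain property (Definition~\ref{D:zero-chain}) your resisting-oracle step needs. It also gives no bound $f-f_*\geq\tfrac12\eta$ on the whole revealed subspace. The paper fixes the function $F(x)=\frac L2\|x-\widetilde x_*\|^2-\frac{L-\mu}{2}\dist^2(x,K_{\mathcal C})$ on $\real^{N+1}$, which lies in $\Fclass$ for \emph{any} sites, so the ordering conditions are not interpolation conditions. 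It proves the zero-chain property by removing every unrevealed site from the projection with Lemma~\ref{L:face-exchange}. The minimizer site is removed via the replacement point $\widehat s_j=\sum_{\ell<j}(\pi_\ell-\pi_{\ell+1})s_\ell+\pi_js_j$. The freedom $\delta_k\neq D_k$, which permits $\pi_k\neq1$, is what decouples the minimizer from the revealed chain; no extra coordinates are involved. The dimension $2N+1=m+N$ comes from the embedding in Lemma~\ref{L:zero-chain-reduction}. The value on $S_{N-1}$ is bounded below by the support quadratic $Q_N$. The certificate decomposition (Lemma~\ref{L:certificate-decomposition}), applied to the candidate chain of Section~\ref{S:candidate-chain}, then gives certificate value $\eta R^2+2H_w(\eta)\gamma_0(D_0-\gamma_0)$. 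Your remark that $a_1$ and $b_1$ ``account for'' the two parts of the budget has no checkable counterpart.

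The existence argument is also not sound as written. For $q>0$, $\mathcal O_N(q)$ is in general not $(0,r_{\max})$. For $N=2$, $b_1/r\to\bar q^{3/2}-q\sqrt{1+\bar q}$ as $r\to0$, which is negative for $q=\tfrac12$, so small radii are not ordered. Monotonicity of $a_1,b_1$ in $r$ is unproved, and for $q=0$ the point $\eta=0$ gives $r=0\notin\mathcal O_N(0)$. The sign of the residual at the edge of the ordered component is asserted in your first case and not addressed at all when $\bar q w_x<qw_f$, so the intermediate value theorem has nothing to act on. The missing tool is the invariant $qa_k^2+b_k^2=r^2$ (Lemma~\ref{L:recurrence-interface}). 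Whenever $H_w(\eta)\geq0$, it forces $\min\{1-a_1,b_1^2\}\geq\eta(w_x+q^2/\bar q)$ (Lemma~\ref{L:existence-margin}). Together with Lemma~\ref{L:backward-stability}(ii) and the start $\sqrt q\in\mathcal O_N(q)$, this forces a sign change for either sign of $\bar q w_x-qw_f$. The case $q=0$ is handled by Proposition~\ref{P:convex-weighted-risk-coefficient}. For the upper bound you specify neither the method nor a certificate, and your potential (one function value, one distance, one gradient norm) lacks the needed structure. The paper's $V_k$ pairs the slack $\Delta_{k-1,*}$, which contains $-\frac{\mu}{2\bar q}\|x_{k-1}^+-x_*\|^2$, with $\frac{L}{2\bar q}\bigl(q\|X_k\|^2+\|Y_k\|^2\bigr)$. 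Here $X_k=(1-a_k)(x_{k-1}^+-x_*)$ and $Y_k=b_k(v_{k-1}-x_*)$ evolve by the norm-preserving matrix of~\eqref{D:sequence} plus a gradient term. The decrease is therefore an exact identity via Lemma~\ref{L:three-point}, and $H_w(\eta)\leq0$ enters only when $V_1$ is compared with the weighted initial quantity.
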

\begin{remark}[Excluded convex case]
When $q=0$ and $w_x=0$, the simplex conditions give $w_f+w_g=1$, and the
weighted initial condition contains no distance information.  Since every smooth
convex function satisfies
\[
  \|\nabla f(x_0)\|^2\leq 2L\bigl(f(x_0)-f_*\bigr),
\]
any instance with $f(x_0)-f_*\leq LR^2/2$ also satisfies
\[
  \frac{2w_f}{L}\bigl(f(x_0)-f_*\bigr)
  +\frac{w_g}{L^2}\|\nabla f(x_0)\|^2
  \leq \frac{2(w_f+w_g)}{L}\bigl(f(x_0)-f_*\bigr)
  \leq R^2.
\]
Thus the weighted class contains the usual class with bounded initial
function-value suboptimality.  On that class, no finite-horizon first-order
guarantee of any prescribed strict improvement is possible;
see~\cite[Appendix~A.2, Lemma~6]{CarmonDuchiHinderSidford2021}.
\end{remark}

\section{Convex-combination zero-chain lower bound}
\label{S:generic}

The lower-bound argument separates the construction of the hard family from the
scalar recurrence.  Here we construct a family of $L$-smooth, $\mu$-strongly convex
functions in which each oracle call can reveal only one new direction, while
retaining separate freedom over the revealed directions and the minimizer.
Section~\ref{S:lower-realization} later uses the recurrence to select a member
whose initial quantities and lower-bound value have the required relation.

\subsection{Zero-chain reduction}
\label{S:zero-chain-reduction}

A minimax lower bound must hold against every deterministic first-order
method.  The resisting-oracle reduction handles this quantifier by presenting
a zero-chain in orthonormal coordinates revealed one direction at a time.  After
$N$ oracle calls, the method can use only an $N$-dimensional revealed subspace,
so its risk is bounded below by the function's suboptimality over that
subspace.  This is the mechanism behind Nesterov's ``worst function in the
world''~\cite[Section~2.1.2]{Nesterov2004} and the earlier constructions of
Nemirovski and Yudin~\cite{NemirovskiYudin1983}.  We recall the zero-chain
definition of Carmon, Duchi, Hinder, and Sidford~\cite{CarmonDuchiHinderSidford2020}
and state the corresponding reduction for the weighted class.

We use the vectors $e_0,\ldots,e_{m-1}$ to denote the canonical unit vectors of
$\real^m$, indexed from $0$ so that $\Span\{e_0,\ldots,e_{j-1}\}$ is the
$j$-dimensional subspace reached after $j$ queries.
To formulate the zero-chain condition, denote the revealed coordinate
subspaces by
\begin{equation}\label{D:revealed-subspaces}
\begin{aligned}
    S_{-1}&:=\{0\}, \\
    S_j&:=\Span\{e_0,\ldots,e_j\},
        &&0\leq j<N.
\end{aligned}
\end{equation}

\begin{definition}[First-order zero-chain]\label{D:zero-chain}
A differentiable function $f:\real^m\to\real$ is a \emph{first-order
zero-chain of length $N$} if
\[
    x\in S_{j-1}
    \quad\Longrightarrow\quad
    \nabla f(x)\in S_j,
    \qquad 0\leq j<N.
\]
\end{definition}
This is \cite[Definition~3]{CarmonDuchiHinderSidford2020}, the
canonical-coordinate specialization of
\cite[Definition~1]{DroriTaylor2022Oracle}.  The following lemma is a
standard resisting-oracle reduction, stated for
the weighted class and the fixed-horizon risk~\eqref{D:risk}.  It differs
from the original in one respect, forced by the strongly convex setting.  The original
reduction embeds a zero-chain on $\real^m$ into the larger space $\real^d$,
and the embedded function is constant along the orthogonal complement of the
embedded subspace; when $\mu>0$, it is therefore not strongly convex on
$\real^d$.  We therefore add a quadratic along that complement, which
restores strong convexity without affecting the argument. We include the full proof for the sake of completeness.

\begin{lemma}[Weighted zero-chain reduction]\label{L:zero-chain-reduction}
Let $1\leq N\leq m$, let $w$ satisfy the simplex
conditions~\eqref{D:weight-simplex}, and let $f_z:\real^m\to\real$ be a
first-order zero-chain of length $N$ with $f_z\in\Fclass(\real^m)$.  Suppose that, for
some minimizer $x_*$ of $f_z$,
\[
    w_x\norm{x_*}^2
    +\frac{2w_f}{L}\bigl(f_z(0)-f_z^*\bigr)
    +\frac{w_g}{L^2}\norm{\nabla f_z(0)}^2
    \leq R^2.
\]
Then
\begin{equation}\label{E:zero-chain-base-mixed}
    \risk{N}{\mathcal P^{\mu,L}_{w,R}(\real^d)}
    \geq
    \inf_{x\in S_{N-1}}
       \bigl(f_z(x)-f_z^*\bigr),
    \qquad d\geq m+N.
\end{equation}
\end{lemma}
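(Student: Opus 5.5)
Fix an arbitrary method $\mathcal M\in\mathcal A_{N,d}$ with $d\geq m+N$. The plan is to build, by a resisting-oracle argument, an orthogonal embedding $U\in\real^{d\times m}$ (so $U^\top U=I_m$) adapted to $\mathcal M$, and a hard function on $\real^d$:
\[
  F(y):=f_z(U^\top y)+\frac{\mu}{2}\norm{(I-UU^\top)y}^2 ,
\]
with initial point $x_0=0$.

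First we check that $(\oracle_F,0)$ lies in $\mathcal P^{\mu,L}_{w,R}(\real^d)$. Write $y=Uu+v$ with $v\perp\operatorname{range}U$. Then $F$ splits as $f_z(u)+\frac{\mu}{2}\norm{v}^2$, which is a sum of a function in $\Fclass(\real^m)$ and a quadratic of curvature $\mu\in[\mu,L]$ on orthogonal subspaces. Hence $F\in\Fclass(\real^d)$. Its minimizers are the points $Ux_*$ with $x_*\in\argminop f_z$ and $v=0$, and $F_*=f_z^*$. Moreover $F(0)=f_z(0)$ and $\norm{\nabla F(0)}=\norm{U\nabla f_z(0)}=\norm{\nabla f_z(0)}$. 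For the chosen $x_*$ we also have $\norm{0-Ux_*}=\norm{x_*}$, so the hypothesis gives exactly the weighted constraint in~\eqref{D:mixed-class}, independently of $U$. This is precisely why the complementary quadratic has curvature $\mu$ and vanishes at $0$: it restores strong convexity while leaving all three initial quantities untouched.

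Next we construct $U$ adaptively. Denote the columns by $u_0,\dots,u_{m-1}$. Let $y_0=0,y_1,\dots,y_{N-1}$ be the queries of $\mathcal M$. We maintain the invariant that, after answering queries $y_0,\dots,y_{j-1}$, the columns $u_0,\dots,u_{j-1}$ are fixed, and every query $y_i$ with $i<j$ satisfies
\[
  y_i\in\Span\{u_0,\dots,u_{i-1}\}\oplus W_i ,
\]
where $W_i$ is the span of the components of earlier queries orthogonal to the fixed columns.

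Rather than tracking the $W_i$ separately, we use the standard trick. When query $y_j$ arrives, choose $u_j$ a unit vector orthogonal to $u_0,\dots,u_{j-1}$ and to $y_0,\dots,y_j$. This is possible since $d\geq m+N$ leaves enough room, as at most $j+(j+1)\le 2N\le m+N$ vectors are excluded. The remaining columns $u_N,\dots,u_{m-1}$ are chosen at the end, orthonormal and orthogonal to all queries and to the final output.

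Then $U^\top y_j=(\ip{u_0}{y_j},\dots,\ip{u_{j-1}}{y_j},0,\dots,0)\in S_{j-1}$. By the zero-chain property, $\nabla f_z(U^\top y_j)\in S_j$. Consequently the oracle answer
\[
  \bigl(F(y_j),\,U\nabla f_z(U^\top y_j)+\mu(I-UU^\top)y_j\bigr)
\]
depends only on $u_0,\dots,u_j$, and $F(y_j)$ depends on $U^\top y_j$ and $\norm{y_j}^2$ only. Hence the whole history, and therefore every subsequent query, is determined before later columns are chosen, so the adaptive construction is consistent with a single fixed $U$. The main care point here is that the value $f_z(U^\top y_j)$ must not depend on unrevealed columns. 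This holds because $U^\top y_j$ involves only $u_0,\dots,u_{j-1}$, the other inner products being zero by construction.

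Finally we bound the error of the output $\hat y=\mathcal M(\oracle_F,0)$. Choosing the late columns orthogonal to $\hat y$ as well (again possible by dimension count, adjusting to $d\ge m+N$ if the output is counted among the $N$ directions), we get $U^\top\hat y\in S_{N-1}$. Therefore
\[
  F(\hat y)-F_*\geq f_z(U^\top\hat y)-f_z^*\geq\inf_{x\in S_{N-1}}\bigl(f_z(x)-f_z^*\bigr),
\]
using that the quadratic term is nonnegative. Since $\mathcal M$ was arbitrary, taking the supremum over instances and then the infimum over methods yields~\eqref{E:zero-chain-base-mixed}.

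I expect the only delicate step to be the dimension bookkeeping for the orthogonality choices, in particular accommodating the output point within $d\geq m+N$. If that count is tight, one can instead note that only the components of $\hat y$ along $u_0,\dots,u_{m-1}$ matter. It then suffices to choose $u_N,\dots,u_{m-1}$ orthogonal to $\hat y$ and to the $N$ query vectors and first $N$ columns, which spans at most $2N+1$ directions together with the $m-N$ new columns. If needed, the excess can be handled by first projecting $\hat y$ onto the span of the queries, since the method's information lies there.
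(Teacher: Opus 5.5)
Your proposal is correct and is essentially the paper's argument: the same extension $f_z(U^{\top}y)+\frac{\mu}{2}\norm{(I-UU^{\top})y}^2$ with $x_0=0$, the same adaptive choice of each $u_j$ orthogonal to the earlier columns and queries so that $U^{\top}y_j\in S_{j-1}$, and the same completion of $u_N,\dots,u_{m-1}$ orthogonal to the output. Your closing worry about the dimension count is unfounded: since $y_0=0$, the late columns only need to avoid the $2N$ vectors $u_0,\dots,u_{N-1},y_1,\dots,y_{N-1},\hat y$, which leaves a complement of dimension $d-2N\geq m-N$, exactly as in the paper. You should also drop the suggested fallback of projecting $\hat y$ onto the span of the queries, because the output is an arbitrary function of the history and the bound must be proved at $\hat y$ itself.
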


\begin{proof}
For each method we exhibit an instance in the class with initial point
$x_0=0$ on which the method's output is confined to an $N$-dimensional
subspace.  A method that stops before $N$ calls may be padded with arbitrary
additional queries after its output has been determined, without changing
that output, so it is enough to consider methods making exactly $N$ calls.

\emph{Extension to $\real^d$.}
For $U\in\real^{d\times m}$ with $U^{\top}U=I$, define
\[
    f_U(x):=f_z(U^{\top}x)+\frac{\mu}{2}\norm{(I-UU^{\top})x}^2,
    \qquad x\in\real^d.
\]
Thus $f_U$ is the orthogonal direct sum of $f_z\in\Fclass(\real^m)$ and a
$\mu$-quadratic, and therefore $f_U\in\Fclass(\real^d)$.  Moreover $Ux_*$ is
a minimizer, $f_U^*=f_z^*$, and
\[
    \norm{Ux_*}=\norm{x_*},\qquad
    f_U(0)-f_U^*=f_z(0)-f_z^*,\qquad
    \nabla f_U(0)=U\nabla f_z(0).
\]
Hence the weighted initial condition holds with the same $R$, so
$(\oracle_{f_U},0)\in\mathcal P^{\mu,L}_{w,R}(\real^d)$ for every such $U$.

\emph{Resisting oracle.}
Fix $\mathcal M\in\mathcal A_{N,d}$ making exactly $N$ calls.  Since $\mathcal M$ is deterministic,
its queries $\xi_1:=x_0=0,\xi_2,\ldots,\xi_N$ and output $\xi_{N+1}$ are
determined by the oracle answers at $\xi_1,\ldots,\xi_N$.  The columns
$u_0,\ldots,u_{m-1}$ of $U$ are the directions in $\real^d$ along which the
coordinates $e_0,\ldots,e_{m-1}$ of $f_z$ are placed, and $\xi_t$ is the
point at which the method makes its $t$th query.  At the $t$th query, only
$u_0,\ldots,u_{t-2}$ have been fixed.  The remaining columns will be chosen
orthogonal to the query $\xi_t$, so $U^{\top}\xi_t$ has support only on the already
placed coordinates.  The zero-chain property then implies that the $f_z$
component of the gradient can reveal at most the next coordinate, carried by
$u_{t-1}$.  The additional term $\mu(I-UU^{\top})\xi_t$ depends only on
$\xi_t$ and the columns already placed, so it introduces no dependence on the
still-unplaced columns.  The invariant~\eqref{E:resisting-invariant} below
formalizes this; we establish it by induction on $t$, complete $U$, and check
that the constructed answers are those of the true oracle.
Following \cite[Lemma~7 and Proposition~2]{CarmonDuchiHinderSidford2020},
we choose the columns of $U$ alongside the queries so that
\begin{equation}\label{E:resisting-invariant}
    \ip{u_j}{\xi_t}=0,
    \qquad j\geq t-1,
    \quad 1\leq t\leq N+1.
\end{equation}
For the induction, suppose $u_0,\ldots,u_{t-2}$ and $\xi_1,\ldots,\xi_t$
have been determined for some $1\leq t\leq N$.  Choose $u_{t-1}$ as a unit vector orthogonal to
$u_0,\ldots,u_{t-2}$ and $\xi_2,\ldots,\xi_t$; these $2t-2<d$ vectors leave
room.  Under~\eqref{E:resisting-invariant},
$U^{\top}\xi_t=\sum_{j\leq t-2}\ip{u_j}{\xi_t}e_j\in S_{t-2}$,
so the zero-chain property gives
$\nabla f_z(U^{\top}\xi_t)\in S_{t-1}$, and
\begin{align*}
    f_U(\xi_t)
    &=f_z(U^{\top}\xi_t)
      +\frac{\mu}{2}\Bigl\|\xi_t-\sum_{j\leq t-2}\ip{u_j}{\xi_t}u_j\Bigr\|^2,\\
    \nabla f_U(\xi_t)
    &=\sum_{j\leq t-1}\bigl(\nabla f_z(U^{\top}\xi_t)\bigr)_ju_j
      +\mu\Bigl(\xi_t-\sum_{j\leq t-2}\ip{u_j}{\xi_t}u_j\Bigr).
\end{align*}
Both depend only on $u_0,\ldots,u_{t-1}$ and $\xi_t$, so the answer at
$\xi_t$ is determined, and with it $\xi_{t+1}$.  To complete $U$ after
the $N$ steps, note that $u_0,\ldots,u_{N-1}$ and $\xi_1,\ldots,\xi_{N+1}$
are now fixed; for the remaining $m-N$ zero-chain directions choose
$u_N,\ldots,u_{m-1}$ orthonormal and orthogonal to $u_0,\ldots,u_{N-1}$ and
$\xi_2,\ldots,\xi_{N+1}$, which is possible because these $2N$ vectors leave
a complement of dimension at least $d-2N\geq m-N$.  For consistency, observe that $U^{\top}U=I$ and
\eqref{E:resisting-invariant} now holds for every $t$, so the answers used
above are the true values of $\oracle_{f_U}$ at $\xi_1,\ldots,\xi_N$; since
$\mathcal M$ is deterministic, running it on $\oracle_{f_U}$ produces
exactly these queries and the output $\xi_{N+1}$.  Finally, \eqref{E:resisting-invariant} at
$t=N+1$ gives $U^{\top}\xi_{N+1}\in S_{N-1}$.

\emph{Lower bound.}
Since $f_U\geq f_z\circ U^{\top}$ and $(\oracle_{f_U},0)$ belongs to the
class,
\[
    f_U(\xi_{N+1})-f_U^*
    \geq\inf_{x\in S_{N-1}}\bigl(f_z(x)-f_z^*\bigr),
\]
and taking the supremum over instances and the infimum over methods
in~\eqref{D:risk} proves~\eqref{E:zero-chain-base-mixed}.
\end{proof}

The lemma converts a statement about all methods into a statement about one
function: the minimax risk is at least the suboptimality of $f_z$ over the
terminal subspace, provided $f_z$ satisfies the weighted initial condition and
the zero-chain property.

\subsection{Elimination by weight transfer}
\label{S:elimination}

We first introduce the projection-based construction from which every
member of the family will be built.
Fix an integer $m\geq1$, a point $x_*\in\real^m$, and a collection $\{s_i\}_{i\in I}\subset\real^m$ for some finite index set
$I$.  Set
\[
    K:=\conv\{s_i:i\in I\},
\]
and call the points $s_i$ the \emph{sites} of $K$.
The construction uses a function whose gradient is governed by projection
onto $K$, while each site supplies a quadratic lower support.  Define
\begin{equation}\label{E:squared-distance-form}
    F(x):=\frac{L}{2}\norm{x-x_*}^2
      -\frac{L-\mu}{2}\dist^2\bigl(x,K\bigr).
\end{equation}
Using $\dist^2(x,K)=\min_{z\in K}\norm{x-z}^2$ and recalling that $\bar q=1-q$, we can rewrite $F$ as
\begin{equation}\label{D:smoothed-interpolant}
    F(x)=\frac{L}{2}\left[
      q\norm{x-x_*}^2
      +\bar q\max_{z\in K}
       \left\{\norm{x-x_*}^2-\norm{x-z}^2\right\}
    \right].
\end{equation}
The two representations expose complementary properties.  The
squared-distance form yields a smooth gradient through the projection onto
$K$, whereas the maximization form separates the $\mu$-strongly convex
quadratic term and exhibits a quadratic lower support for every site.
The maximizer in~\eqref{D:smoothed-interpolant} is $\Proj_K(x)$, where
$\Proj_K$ denotes the Euclidean projection onto $K$.  Hence the normalized
gradient is the convex combination
\begin{equation}\label{E:projection-gradient}
    \frac1L\nabla F(x)
    =q(x-x_*)+\bar q\bigl(\Proj_K(x)-x_*\bigr).
\end{equation}
Taking $z=s_i$ in~\eqref{D:smoothed-interpolant} yields a quadratic support for
each site.  For $i\in I$, write
\begin{equation}\label{D:support-quadratics}
    Q_i(x):=\frac{L}{2}\norm{x-x_*}^2
      -\frac{L-\mu}{2}\norm{x-s_i}^2.
\end{equation}
Thus
\begin{equation}\label{E:support-ineq}
    F(x)\geq Q_i(x),
    \qquad x\in\real^m,\quad i\in I.
\end{equation}
The next lemma establishes the class membership and the minimizer condition.

\begin{lemma}
\label{L:smoothed-interpolant}
Let $K\subset\real^m$ be the convex hull of a finite family of sites
$(s_i)_{i\in I}$, let $x_*\in\real^m$, and let $F$ be
given by~\eqref{E:squared-distance-form}.  Then:
\begin{enumerate}[label=(\roman*),leftmargin=2.2em]
    \item $F\in\Fclass(\real^m)$, with gradient given
    by~\eqref{E:projection-gradient};
    \item if $x_*\in K$, then $x_*$ is a minimizer of
    $F$, with $F(x_*)=0$.
\end{enumerate}
\end{lemma}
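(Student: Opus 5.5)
The plan is to derive everything from the squared-distance form~\eqref{E:squared-distance-form} and the standard properties of the squared distance to a closed convex set. Put $h(x):=\frac12\dist^2(x,K)$. Since $K$ is a nonempty compact convex set, $h$ is convex and differentiable with $\nabla h(x)=x-\Proj_K(x)$. Moreover $\nabla h$ is firmly nonexpansive, because $I-\Proj_K$ is firmly nonexpansive whenever $\Proj_K$ is. Substituting into~\eqref{E:squared-distance-form},
\[
\nabla F(x)=L(x-x_*)-(L-\mu)\bigl(x-\Proj_K(x)\bigr),
\]
and dividing by $L$ with $\bar q=(L-\mu)/L$ gives exactly~\eqref{E:projection-gradient}. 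Differentiability is therefore immediate.

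For class membership in (i), I would write $F=\frac{\mu}{2}\norm{x-x_*}^2+(L-\mu)g(x)$, where
\[
g(x):=\frac12\norm{x-x_*}^2-h(x)=\max_{z\in K}\Bigl\{\ip{x}{z-x_*}+\tfrac12\norm{x_*}^2-\tfrac12\norm{z}^2\Bigr\}.
\]
This is the maximization form~\eqref{D:smoothed-interpolant}: expanding $\norm{x-x_*}^2-\norm{x-z}^2$ shows it is affine in $x$ for each fixed $z$. So $g$ is a pointwise maximum of affine functions and hence convex, which makes $F$ $\mu$-strongly convex.

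For the $L$-Lipschitz gradient I would not bound $\norm{\nabla F(x)-\nabla F(y)}$ directly. Instead I would check that $\psi:=\frac L2\norm{\cdot}^2-F$ is convex, which together with convexity of $F$ gives $L$-smoothness. Up to affine terms, $\psi=(L-\mu)h$, and $h$ is convex, so this step is immediate.

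The main subtlety is that convexity of $F$ and of $\frac L2\norm{\cdot}^2-F$ yields the gradient Lipschitz bound. I plan to use the standard fact that for a convex differentiable $F$, convexity of $\frac L2\norm{\cdot}^2-F$ is equivalent to $L$-smoothness. A direct alternative is to note that $\nabla g=\Proj_K-x_*$ is $1$-Lipschitz, so $\nabla F$ is the sum of a $\mu$-Lipschitz map and an $(L-\mu)$-Lipschitz map, and is therefore $L$-Lipschitz. I would present this direct argument because it is self-contained.

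For (ii), if $x_*\in K$ then $\Proj_K(x_*)=x_*$, so~\eqref{E:projection-gradient} gives $\nabla F(x_*)=0$. By convexity, $x_*$ is a minimizer. Finally, $F(x_*)=0-\frac{L-\mu}{2}\dist^2(x_*,K)=0$.
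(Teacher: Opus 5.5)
Your proposal is correct and follows essentially the same route as the paper: $\mu$-strong convexity from the maximization form, the gradient formula from $\nabla\frac12\dist^2(\cdot,K)=I-\Proj_K$, and the $L$-Lipschitz bound from nonexpansiveness of $\Proj_K$. The only cosmetic difference is in (ii): the paper uses $\dist(x,K)\leq\norm{x-x_*}$ to get $F(x)\geq\frac{\mu}{2}\norm{x-x_*}^2\geq0=F(x_*)$ directly, whereas you use $\nabla F(x_*)=0$ together with convexity, and both arguments are immediate.
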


\begin{proof}
For part~(i), $\mu$-strong convexity follows from~\eqref{D:smoothed-interpolant}.
The Moreau-envelope identity
$\nabla\frac12\dist^2(\cdot,K)=I-\Proj_K$~\cite{Moreau1965} gives
\eqref{E:projection-gradient}; nonexpansiveness of $\Proj_K$ then gives the
$L$-Lipschitz gradient bound.  Part~(ii) follows from
$\dist(x,K)\leq\norm{x-x_*}$ when $x_*\in K$.
\end{proof}

In the family constructed next, $K$ will contain the stage points
$s_0,\ldots,s_N$ together with a distinguished point $s_*$ corresponding to
the minimizer.

To establish that the constructed function is a zero-chain, it suffices to show
that, for every $x$ attainable after $j$ queries,
$\Proj_K(x)\in\conv\{s_0,\ldots,s_j\}$.  We prove the stronger property
that the sites $s_{j+1},\ldots,s_N$ and $s_*$ have zero coefficient in every
convex representation of $\Proj_K(x)$ by the sites.
The next lemma gives a criterion for excluding one site $s_k$: it suffices to find a point $z\in K$ such that, if a convex representation of $\Proj_K(x)$ assigned positive weight to $s_k$, transferring a small amount of that weight from $s_k$ to $z$ would decrease the distance to $x$. The stated inner-product condition guarantees precisely this, contradicting optimality of the projection.

\begin{lemma}[Elimination by weight transfer]\label{L:face-exchange}
Let $K=\conv\{s_i:i\in I\}\subset\real^m$ for a finite family of sites,
let $k\in I$, and let $x\in\real^m$.  Suppose there exists $z\in K$ with
$z\neq s_k$ such that
\begin{equation}\label{E:transfer-condition}
    \ip{s_k-z}{s_i-x}\geq0
       \quad\text{for every }i\in I.
\end{equation}
Then every convex representation of $\Proj_K(x)$ by the sites has zero
coefficient at $k$: i.e., if
\[
    \Proj_K(x)=\sum_{i\in I}\alpha_i s_i,
    \qquad
    \alpha_i\geq0,\quad \sum_{i\in I}\alpha_i=1,
\]
then $\alpha_k=0$.
\end{lemma}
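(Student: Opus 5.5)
We argue by contradiction, using the variational characterization of the projection onto a closed convex set: $p:=\Proj_K(x)$ is the unique point of $K$ with $\ip{x-p}{y-p}\leq0$ for every $y\in K$. Suppose $p=\sum_i\alpha_is_i$ is a convex representation with $\alpha_k>0$. The plan is to move a small amount $t\alpha_k$ of weight from $s_k$ to $z$ and show that the resulting point of $K$ is strictly closer to $x$ than $p$, which contradicts the minimality of $p$.

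For $t\in(0,1]$, set $p_t:=p+t\alpha_k(z-s_k)$. Since $z\in K$, we can write $z=\sum_i\beta_is_i$, so $p_t$ is a convex combination of the sites with coefficients $\alpha_i-t\alpha_k\delta_{ik}+t\alpha_k\beta_i\geq0$. Hence $p_t\in K$. Expanding the square gives
\[
  \norm{x-p_t}^2=\norm{x-p}^2-2t\alpha_k\ip{x-p}{z-s_k}+t^2\alpha_k^2\norm{z-s_k}^2 .
\]
Next we sign the linear term. Write $x-p=\sum_i\alpha_i(x-s_i)$ and apply \eqref{E:transfer-condition}, which says $\ip{s_k-z}{s_i-x}\geq0$, or equivalently $\ip{z-s_k}{x-s_i}\geq0$, for every $i$. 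Therefore
\[
  \ip{x-p}{z-s_k}=\sum_i\alpha_i\ip{x-s_i}{z-s_k}\geq0 .
\]
So the linear term does not increase the distance.

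The main obstacle is that this only gives a nonpositive first-order term, while the quadratic term $t^2\alpha_k^2\norm{z-s_k}^2$ is strictly positive, so small $t$ does not immediately produce a strict decrease. I would instead use the inequality directly in the projection characterization. Take $y=p+\alpha_k(z-s_k)\in K$ (the case $t=1$). The variational inequality gives $\ip{x-p}{\alpha_k(z-s_k)}\leq0$, and together with the inequality above this forces $\ip{x-p}{z-s_k}=0$.

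Then I would redo the computation with the other sign split, writing $p-s_k$ in place of $x-s_k$ and $\ip{x-p}{\cdot}=0$ for the remaining part. Expanding $0=\ip{x-p}{z-s_k}=\sum_i\alpha_i\ip{x-s_i}{z-s_k}$ as a sum of nonnegative terms shows that each term with $\alpha_i>0$ vanishes; in particular $\ip{x-s_k}{z-s_k}=0$. Combining this with $\ip{x-p}{z-s_k}=0$ gives $\ip{p-s_k}{z-s_k}=0$.

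I expect this last step, extracting strictness from $z\neq s_k$, to be the delicate point. The idea is to write $p-s_k=\sum_i\alpha_i(s_i-s_k)$ and use $z-s_k$ being nonzero together with the degenerate equalities to derive $\alpha_k=0$. If that route fails, I would fall back on perturbing $z$ toward $s_k$ and checking whether the hypothesis is in fact intended in strict form for the later applications.
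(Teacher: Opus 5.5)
\paragraph{Gap in the final step.} Your steps up to $\ip{x-s_k}{z-s_k}=0$ are all correct. The proof stops there, however, and the finish you propose cannot work. If you expand $p-s_k=\sum_i\alpha_i(s_i-s_k)$, then for every $i$ with $\alpha_i>0$ the term
\[
\ip{s_i-s_k}{z-s_k}=\ip{x-s_k}{z-s_k}-\ip{x-s_i}{z-s_k}
\]
already vanishes by the equalities you derived. So $\ip{p-s_k}{z-s_k}=0$ is automatic and yields no contradiction. The fallback of suspecting that the hypothesis should be strict is also unnecessary, because the lemma is true as stated. The underlying issue is that you use $z\in K$ only to make $p_t$ feasible. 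You never feed the convex representation of $z$ back into the transfer condition, and that is where the contradiction comes from.

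\paragraph{Missing idea.} Write $z=\sum_i\beta_is_i$ with convex weights $\beta_i$, and average \eqref{E:transfer-condition} against them. This gives $\ip{s_k-z}{z-x}\geq0$, and hence
\[
\ip{s_k-z}{s_k-x}=\norm{s_k-z}^2+\ip{s_k-z}{z-x}\geq\norm{s_k-z}^2>0 .
\]
The left-hand side is exactly the inner product $\ip{x-s_k}{z-s_k}$ that you showed to be zero, so your route closes in one line.

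\paragraph{How the paper proceeds.} The same averaging also removes your ``main obstacle'' directly, with no variational-inequality detour. Since every term is nonnegative,
\[
\ip{s_k-z}{p-x}=\sum_i\alpha_i\ip{s_k-z}{s_i-x}\geq\alpha_k\ip{s_k-z}{s_k-x}\geq\alpha_k\norm{s_k-z}^2 .
\]
So the linear term in your expansion is at least $2t\alpha_k^2\norm{z-s_k}^2$, and
\[
\norm{x-p_t}^2\leq\norm{x-p}^2-t(2-t)\alpha_k^2\norm{z-s_k}^2<\norm{x-p}^2 \quad\text{for every } t\in(0,1].
\]
This contradicts the minimality of $p$, which is exactly the paper's argument.
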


\begin{proof}
Set $p:=\Proj_K(x)$ and fix a convex representation
$p=\sum_{i\in I}\alpha_i s_i$.

Since $z$ is a convex combination of the sites, \eqref{E:transfer-condition}
gives
$\ip{s_k-z}{z-x}\geq0$, hence
$\ip{s_k-z}{s_k-x}\geq\norm{s_k-z}^2$.
We get
\begin{equation}\label{E:fe-lower}
  \ip{s_k-z}{p-x}=\sum_{i\in I}\alpha_i\ip{s_k-z}{s_i-x}
  \;\geq\;\alpha_k\ip{s_k-z}{s_k-x}\;\geq\;\alpha_k\norm{s_k-z}^2.
\end{equation}

Suppose $\alpha_k>0$; we exhibit a point of $K$ closer to $x$ than $p$.
For $\varepsilon\in(0,1]$, the point
\[
    p_\varepsilon
    :=\sum_{i\neq k}\alpha_is_i+(1-\varepsilon)\alpha_ks_k+\varepsilon\alpha_kz
    =p-\varepsilon\alpha_k(s_k-z)
\]
is a convex combination of points of $K$, hence lies in $K$.
By~\eqref{E:fe-lower},
\[
    \norm{p_\varepsilon-x}^2
    =\norm{p-x}^2-2\varepsilon\alpha_k\ip{s_k-z}{p-x}
      +\varepsilon^2\alpha_k^2\norm{s_k-z}^2
    \leq\norm{p-x}^2-\varepsilon(2-\varepsilon)\alpha_k^2\norm{s_k-z}^2
    <\norm{p-x}^2,
\]
using $z\neq s_k$; this contradicts $p=\Proj_K(x)$.  Hence $\alpha_k=0$.
\end{proof}

\subsection{A family of hard functions}
\label{S:hard-family}

We now specialize this construction to a scalar-parametrized family.  The scalars
$D_k$, $\gamma_k$, and $\delta_k$ specify, respectively, the minimizer
coordinates, gradients, and chain points, and each scalar tuple $\mathcal C$
determines a candidate member $F_{\mathcal C}$.  The admissibility conditions
introduced below identify the hard subfamily: for an admissible $\mathcal C$,
every unrevealed site can be excluded from the projection at the corresponding
stage, so that $F_{\mathcal C}$ reveals at most one new coordinate at a time.

\begin{definition}[Chain-parametrized candidate function]
\label{D:chain-function}
Fix $N\geq1$ and $q\in[0,1)$, and let
\[
  \mathcal C
  :=\bigl((D_k)_{k=0}^N,(\gamma_k)_{k=0}^N,
          (\delta_k)_{k=0}^{N-1}\bigr)
\]
be a tuple of scalars.  Work in $\real^{N+1}$ with canonical basis
$e_0,\ldots,e_N$, and define
\begin{equation}\label{D:generic-vertices}
\begin{aligned}
    \widetilde x_*&:=-\sum_{\ell=0}^ND_\ell e_\ell,\\
    \widetilde g_i&:=L\gamma_ie_i,
       &&0\leq i\leq N,\\
    \widetilde x_i&:=-\sum_{\ell=0}^{i-1}\delta_\ell e_\ell,
       &&0\leq i\leq N.
\end{aligned}
\end{equation}
For $I:=\{0,\ldots,N,*\}$, define the site offsets
\begin{equation}\label{D:generic-transformed-sites}
\begin{aligned}
    t_i&:=L^{-1}(\widetilde g_i-\mu\widetilde x_i)
       =q\sum_{\ell=0}^{i-1}\delta_\ell e_\ell+\gamma_ie_i,
       &&0\leq i\leq N,\\
    t_*&:=-q\widetilde x_*=q\sum_{\ell=0}^ND_\ell e_\ell,
\end{aligned}
\end{equation}
and put
\begin{equation}\label{D:generic-sites}
    s_i:=\frac{\widetilde x_*+t_i}{\bar q},
    \qquad i\in I,
    \qquad
    K_{\mathcal C}:=\conv\{s_i:i\in I\}.
\end{equation}
Let $F_{\mathcal C}$ denote the instance of~\eqref{E:squared-distance-form}
determined by $\widetilde x_*$ and $K_{\mathcal C}$.
\end{definition}

Thus every site is obtained from its offset by the same affine map,
\begin{equation}\label{E:generic-site-expansion}
    \bar q s_i-\widetilde x_*=t_i,
    \qquad i\in I.
\end{equation}
For $0\leq i\leq N$, the choice of $t_i$ is precisely the requirement that
the support quadratic~\eqref{D:support-quadratics} attain the prescribed
gradient at the corresponding chain point: since
$\nabla Q_i(x)=\mu x+L(\bar qs_i-\widetilde x_*)$, we have
$\nabla Q_i(\widetilde x_i)=\widetilde g_i$.  At the distinguished index,
$t_*=-q\widetilde x_*$ gives $s_*=\widetilde x_*$, so the corresponding
support quadratic is stationary at the minimizer.

The following definition collects conditions that will be sufficient
for establishing that $F_{\mathcal C}$ is a zero-chain.

\begin{definition}[Admissible chain]\label{D:admissible-chain}
An \emph{admissible chain} is a tuple
\[
  \mathcal C
  =\bigl((D_k)_{k=0}^N,(\gamma_k)_{k=0}^N,
          (\delta_k)_{k=0}^{N-1}\bigr)
\]
whose scalars and associated coefficients satisfy the following conditions.
The scalars are required to satisfy
\begin{equation}\label{E:generic-signs}
\begin{aligned}
  D_k&>0, &&0\leq k\leq N,\\
  \gamma_k&>q\delta_k,\qquad \delta_k\geq0,
  &&0\leq k<N,\\
  0<\gamma_N&<D_N,
\end{aligned}
\end{equation}
and
\begin{equation}\label{E:generic-balance}
  (\gamma_k-q\delta_k)(D_k-\gamma_k)
    \geq\gamma_{k+1}D_{k+1},
  \qquad 0\leq k<N,
\end{equation}
whereas the associated coefficients defined by $\pi_0:=1$ and recursively
\begin{equation}\label{E:generic-pi-recurrence}
  \pi_{k+1}:=\frac{\pi_k\gamma_k-qD_k}
                  {\gamma_k-q\delta_k},
  \qquad 0\leq k<N-1,
\end{equation}
are required to satisfy
\begin{align}
  \pi_0\geq \pi_1\geq\cdots\geq \pi_{N-1}&>0,
  \label{E:generic-pi-monotone}\\
  \bigl(\pi_{N-1}\gamma_{N-1}-qD_{N-1}\bigr)
   \bigl(D_{N-1}-\gamma_{N-1}\bigr)
   &\geq qD_N^2.
  \label{E:generic-terminal-admissibility}
\end{align}
\end{definition}

Establishing the zero-chain condition requires two applications of the
weight-transfer criterion in Lemma~\ref{L:face-exchange}.  At stage $j$, each
unrevealed chain site can be eliminated by transferring any positive weight on
that site to the revealed site $s_j$, using the chain
conditions.  The coefficients
$\pi_0,\ldots,\pi_j$ determine a replacement point in
$\conv\{s_0,\ldots,s_j\}$ for the minimizer site $s_*$; transferring weight
from $s_*$ to this point again decreases the distance to the query point.
The admissibility conditions in the last definition
ensure that the hypotheses of Lemma~\ref{L:face-exchange} hold for both
transfers.

\begin{proposition}[Zero-chain property]\label{P:revealed-face}
Let $\mathcal C$ be an admissible chain. For every $0\leq j<N$,
$x\in S_{j-1}$ implies
\begin{equation}\label{E:revealed-face-projection}
\begin{aligned}
    \Proj_{K_{\mathcal C}}(x)&\in\conv\{s_0,\ldots,s_j\},
    \\
    \nabla F_{\mathcal C}(x)&\in S_j.
\end{aligned}
\end{equation}
Thus $F_{\mathcal C}$ is a first-order zero-chain of length $N$.
\end{proposition}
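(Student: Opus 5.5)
The proof goes through Lemma~\ref{L:face-exchange}. I would first show the projection statement in~\eqref{E:revealed-face-projection} and then deduce the gradient statement from it.

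\emph{Gradient step.} By~\eqref{E:projection-gradient} and~\eqref{E:generic-site-expansion}, if $\Proj_{K_{\mathcal C}}(x)=\sum_{i}\alpha_is_i$ is a convex representation, then
\[
\tfrac1L\nabla F_{\mathcal C}(x)=qx+\bar q\Proj_{K_{\mathcal C}}(x)-\widetilde x_*=qx+\sum_i\alpha_i t_i .
\]
The $q\widetilde x_*$ terms cancel here, which is why the offset formulation is convenient. Suppose only $\alpha_0,\dots,\alpha_j$ can be nonzero. Since $t_i\in S_i$ for $i\le N$ and $x\in S_{j-1}$, this gives $\nabla F_{\mathcal C}(x)\in S_j$. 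The zero-chain property of Definition~\ref{D:zero-chain} then follows for all $0\le j<N$.

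\emph{Projection step: reformulation.} Fix $0\le j<N$ and $x\in S_{j-1}$. I must show that every convex representation assigns zero weight to $s_{j+1},\dots,s_N$ and to $s_*$. Multiplying~\eqref{E:transfer-condition} by $\bar q^2$ and putting $y:=\bar qx-\widetilde x_*$, the transfer condition becomes
\[
\ip{t_k-t_z}{t_i-y}\ge0\quad\text{for all } i\in I,
\]
where $t_z$ is the matching convex combination of offsets. The key structural fact is that $y=\bar qx+\sum_\ell D_\ell e_\ell$ agrees with $\sum_\ell D_\ell e_\ell$ in every coordinate $\ell\ge j$, and only its first $j$ coordinates depend on $x$. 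The plan is to choose transfer points $z$ for which $t_k-t_z$ is supported on coordinates $\ge j$. Then the $x$-dependence drops out and the condition becomes a finite family of scalar inequalities in $D,\gamma,\delta$.

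\emph{Projection step: unrevealed chain sites.} For a chain site $s_k$ with $j<k\le N$, I take $z=s_j$. Then
\[
t_k-t_j=(q\delta_j-\gamma_j)e_j+q\sum_{j<\ell<k}\delta_\ell e_\ell+\gamma_ke_k,
\]
which is supported on coordinates $\ge j$ as required. I would compute $t_i-y$ on coordinates $\ell\ge j$ separately for $i<j$, $i=j$, $j<i<k$, $i=k$, $i>k$ and $i=*$. Each inner product is then an explicit expression, and I expect nonnegativity to follow from the sign conditions~\eqref{E:generic-signs}. The dominant negative contribution comes from the $e_j$ coordinate, of the form $(\gamma_j-q\delta_j)(D_j-\gamma_j)$ against later terms $\gamma_\ell D_\ell$ or $q\delta_\ell D_\ell$. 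This should be handled by iterating the balance inequality~\eqref{E:generic-balance}, which controls $\gamma_{\ell+1}D_{\ell+1}$ by the previous stage, so the chain of terms telescopes.

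\emph{Projection step: the minimizer site (main obstacle).} For $s_*$ I would take $z=\sum_{i\le j}\beta_is_i$, with weights built from $\pi_0,\dots,\pi_j$. The weights are chosen so that $t_*-t_z$ vanishes on coordinates $<j$ and its $e_j$ component equals $\pi_j\gamma_j-qD_j$ up to a positive factor; the recurrence~\eqref{E:generic-pi-recurrence} is exactly what makes the earlier coordinates cancel successively. Monotonicity and positivity~\eqref{E:generic-pi-monotone} should ensure the $\beta_i$ are nonnegative and sum to one, so that $z\in K_{\mathcal C}$ and $z\neq s_*$. Checking the inner-product condition against all sites then reuses the balance inequalities for $j<N-1$. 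The final stage $j=N-1$ needs the terminal condition~\eqref{E:generic-terminal-admissibility}, where the tail coordinate carries $qD_N^2$.

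I expect the main obstacle to be pinning down the weights $\beta_i$ and verifying the case $i=*$, where the quadratic $qD_N^2$-type terms appear. I would first work out $j=0$ and $j=N-1$ explicitly to confirm the form of $z$, and then induct on $j$.
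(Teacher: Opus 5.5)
Your proposal is correct and is essentially the paper's own proof. Both apply Lemma~\ref{L:face-exchange} with $z=s_j$ for each unrevealed chain site and with $z=\widehat s_j:=\sum_{\ell<j}(\pi_\ell-\pi_{\ell+1})s_\ell+\pi_js_j$ for $s_*$, both differences being orthogonal to $S_{j-1}$, and then read off the gradient from \eqref{E:projection-gradient}. The only difference is organizational: the paper first proves $\langle s_{k+1}-s_k,s_i\rangle\geq0$ and writes $\bar q(\widehat s_{k+1}-\widehat s_k)=\pi_{k+1}(t_{k+1}-t_k)$, whereas you telescope the scalar balance inequalities directly.

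Two small corrections. First, the $e_j$-component of $t_*-t_z$ is $-(\pi_j\gamma_j-qD_j)$, and you must prove that $\pi_j\gamma_j-qD_j>0$: this follows from \eqref{E:generic-pi-recurrence} for $j<N-1$, and from \eqref{E:generic-terminal-admissibility} (when $q>0$) for $j=N-1$. Second, $t_*-t_z$ carries the component $qD_Ne_N$ for every $j$, so the terminal condition \eqref{E:generic-terminal-admissibility} is needed to close the telescoped bound at every stage, not only at $j=N-1$.
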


The proof, given in Appendix~\ref{A:revealed-face-proof}, applies
Lemma~\ref{L:face-exchange} once per excluded site.

The lower bound corresponding to an admissible chain now follows directly from
the zero-chain reduction.  It is summarized by four scalar quantities: $\Phi$
measures the terminal suboptimality forced on the final revealed subspace
$S_{N-1}$, while $B_x,B_f,B_g$ are the three initial quantities of the associated function.  We
first define these quantities and record their geometric interpretation; the
theorem below then assembles the construction.

\begin{definition}[Certificate and initial-condition data]\label{D:chain-certificate-data}
For a tuple
\[
  \mathcal C
  =\bigl((D_k)_{k=0}^N,(\gamma_k)_{k=0}^N,
          (\delta_k)_{k=0}^{N-1}\bigr),
\]
define its \emph{certificate value} $\Phi$ and \emph{initial-condition data} $(B_x,B_f,B_g)$ by
\begin{equation}\label{D:chain-quantities}
    \Phi:=D_N^2-
    \frac{(D_N-\gamma_N)^2
          +q\sum_{\ell=0}^{N-1}(D_\ell-\delta_\ell)^2}{\bar q},
\end{equation}
and
\begin{equation}\label{E:chain-quantities-explicit}
\begin{aligned}
    B_x&:=\sum_{\ell=0}^ND_\ell^2,\\
    B_f&:=D_0^2-\frac{(D_0-\gamma_0)^2+q\sum_{\ell=1}^ND_\ell^2}{\bar q},\\
    B_g&:=\gamma_0^2.
\end{aligned}
\end{equation}
\end{definition}

\begin{lemma}[Certificate and initial-condition data]
\label{L:certificate-data}
Let $\mathcal C$ be an admissible chain, let $F_{\mathcal C}$ be its associated
function from Definition~\ref{D:chain-function}, and let
$(\Phi,B_x,B_f,B_g)$ be its certificate and initial-condition data.  Then:
\begin{enumerate}[label=(\roman*),leftmargin=2.2em]
    \item on the final revealed subspace $S_{N-1}$,
    \begin{equation}\label{E:generic-restricted-value}
        \inf_{x\in S_{N-1}}
        F_{\mathcal C}(x)
        \geq Q_N(\widetilde x_N)
        =\frac{L}{2}\Phi;
    \end{equation}
    \item at the starting point $x_0=0$,
    \begin{equation}\label{E:generic-initial-quantities}
    \begin{aligned}
        B_x&=\norm{\widetilde x_*}^2,\\
        B_f&=\frac{2}{L}F_{\mathcal C}(0),\\
        B_g&=\frac{1}{L^2}\norm{\nabla F_{\mathcal C}(0)}^2,
    \end{aligned}
    \end{equation}
    and these three quantities are positive.
\end{enumerate}
\end{lemma}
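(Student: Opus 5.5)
The plan for part (i) is to use the support inequality \eqref{E:support-ineq} with $i=N$, which gives $F_{\mathcal C}\geq Q_N$ on all of $\real^{N+1}$. It then suffices to show that $\inf_{x\in S_{N-1}}Q_N(x)=Q_N(\widetilde x_N)$ and to evaluate this value.

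Since $\nabla Q_N(x)=\mu x+L(\bar q s_N-\widetilde x_*)=\mu x+Lt_N$ and $t_N=q\sum_{\ell<N}\delta_\ell e_\ell+\gamma_Ne_N$, the components of $\nabla Q_N(x)$ along $e_\ell$ for $\ell<N$ equal $\mu(x_\ell+\delta_\ell)$. These vanish at $\widetilde x_N\in S_{N-1}$.
\begin{itemize}
\item If $q>0$, the Hessian $\mu I$ makes the restriction of $Q_N$ to $S_{N-1}$ strongly convex, so $\widetilde x_N$ is its minimizer there.
\item If $q=0$, the restricted gradient is identically zero, because $\nabla Q_N=L\gamma_Ne_N\perp S_{N-1}$. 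The restriction is therefore constant, and again the infimum equals $Q_N(\widetilde x_N)$.
\end{itemize}

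For the value, note that $\widetilde x_N-\widetilde x_*$ has coordinates $D_\ell-\delta_\ell$ for $\ell<N$ and $D_N$ at $\ell=N$. Using \eqref{E:generic-site-expansion},
\[
\bar q(\widetilde x_N-s_N)=(\widetilde x_N-\widetilde x_*)-q\widetilde x_N-t_N .
\]
Its coordinates are $D_\ell-\delta_\ell$ for $\ell<N$, since the $q\delta_\ell$ terms cancel, and $D_N-\gamma_N$ at $\ell=N$. Substituting into \eqref{D:support-quadratics} gives
\[
Q_N(\widetilde x_N)=\frac L2\Bigl[\sum_{\ell<N}(D_\ell-\delta_\ell)^2+D_N^2-\frac{(D_N-\gamma_N)^2+\sum_{\ell<N}(D_\ell-\delta_\ell)^2}{\bar q}\Bigr].
\]
Since $1-1/\bar q=-q/\bar q$, this equals $\frac L2\Phi$.

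For part (ii), the identity $B_x=\norm{\widetilde x_*}^2$ is immediate from \eqref{D:generic-vertices}. The key input for the other two quantities is Proposition~\ref{P:revealed-face} with $j=0$ and $x=0\in S_{-1}$, which gives $\Proj_{K_{\mathcal C}}(0)=s_0$.
\begin{itemize}
\item Gradient: by \eqref{E:projection-gradient},
\[
\frac1L\nabla F_{\mathcal C}(0)=-q\widetilde x_*+\bar q s_0-\bar q\widetilde x_*=\bar q s_0-\widetilde x_*=t_0=\gamma_0e_0,
\]
because $\widetilde x_0=0$. Hence $B_g=\gamma_0^2$.
\item Function value: the maximizer in \eqref{D:smoothed-interpolant} is the projection $s_0$, so $F_{\mathcal C}(0)=Q_0(0)=\frac L2[\norm{\widetilde x_*}^2-\bar q\norm{s_0}^2]$. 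Here $\bar q s_0=(\gamma_0-D_0)e_0-\sum_{\ell\geq1}D_\ell e_\ell$, so
\[
\frac2L F_{\mathcal C}(0)=\sum_\ell D_\ell^2-\frac{(D_0-\gamma_0)^2+\sum_{\ell\geq1}D_\ell^2}{\bar q},
\]
which rearranges to $B_f$.
\end{itemize}
These expressions match \eqref{E:generic-initial-quantities} with $F_{\mathcal C}^*=0$, because $s_*=\widetilde x_*\in K_{\mathcal C}$ and Lemma~\ref{L:smoothed-interpolant}(ii) applies.

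Positivity follows from admissibility \eqref{E:generic-signs}.
\begin{itemize}
\item $B_x>0$ because every $D_\ell>0$.
\item $B_g>0$ because $\gamma_0>q\delta_0\geq0$ when $N\geq1$.
\item $B_f>0$ because $\nabla F_{\mathcal C}(0)\neq0$, so $0$ is not a minimizer and $F_{\mathcal C}(0)>F_{\mathcal C}^*=0$.
\end{itemize}

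The only nonroutine step is the identification $\Proj_{K_{\mathcal C}}(0)=s_0$, and that is supplied by Proposition~\ref{P:revealed-face}. The main care needed elsewhere is the degenerate case $q=0$ in part (i).
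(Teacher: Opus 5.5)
Your proof is correct and follows the paper's argument essentially step for step. For (i) you use the support inequality $F_{\mathcal C}\geq Q_N$, first-order optimality of $\widetilde x_N$ on $S_{N-1}$, and a direct completion of the square. For (ii) you use $\Proj_{K_{\mathcal C}}(0)=s_0$ from Proposition~\ref{P:revealed-face} with $j=0$, and you get $B_f>0$ from $\nabla F_{\mathcal C}(0)\neq0$, exactly as the paper does. Your case split into $q>0$ and $q=0$ is harmless but unnecessary: $Q_N$ has Hessian $\mu I\succeq0$, so it is convex for every $q$, and convexity is all the paper uses.
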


The proof is a direct computation from Definition~\ref{D:chain-function}
and Proposition~\ref{P:revealed-face}; it is given in
Appendix~\ref{A:certificate-data-proof}.

\begin{proposition}[Lower bound from an admissible chain]
\label{P:generic-hard-function}
Let $\mathcal C$ be an admissible chain, let $F_{\mathcal C}$ be its associated
function from Definition~\ref{D:chain-function}, and let
$(\Phi,B_x,B_f,B_g)$ denote its certificate and initial-condition data.  Then:
\begin{enumerate}[label=(\roman*),leftmargin=2.2em]
    \item $F_{\mathcal C}\in\Fclass(\real^{N+1})$, with minimizer $\widetilde x_*$ and minimum value $0$;
    \item $F_{\mathcal C}$ is a first-order zero-chain of length $N$ in the
    sense of Definition~\ref{D:zero-chain}.
\end{enumerate}
Consequently, for every $w$ satisfying the simplex
conditions~\eqref{D:weight-simplex} and every $d\geq2N+1$,
\begin{equation}\label{E:generic-risk-mixed}
    \risk{N}{\mathcal P^{\mu,L}_{w,R}(\real^d)}
    \geq\frac{L}{2}\Phi,
\end{equation}
where $R>0$ is defined by
\begin{equation}\label{D:generic-mixed-budget}
    R^2=w_xB_x+w_fB_f+w_gB_g.
\end{equation}
\end{proposition}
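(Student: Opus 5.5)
This proposition is an assembly step: all the work has been done in Lemma~\ref{L:smoothed-interpolant}, Proposition~\ref{P:revealed-face}, Lemma~\ref{L:certificate-data} and Lemma~\ref{L:zero-chain-reduction}. The plan is to verify their hypotheses in order and then chain the inequalities.

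\emph{Part (i).} Apply Lemma~\ref{L:smoothed-interpolant} with sites $(s_i)_{i\in I}$ and center $\widetilde x_*$. Part~(i) of that lemma gives $F_{\mathcal C}\in\Fclass(\real^{N+1})$. For the minimizer I use $s_*=\widetilde x_*$: by \eqref{D:generic-sites}, $\bar q s_*=\widetilde x_*+t_*=\bar q\,\widetilde x_*$, which is already noted after Definition~\ref{D:chain-function}. Hence $\widetilde x_*\in K_{\mathcal C}$, and part~(ii) of Lemma~\ref{L:smoothed-interpolant} shows that $\widetilde x_*$ is a minimizer with $F_{\mathcal C}(\widetilde x_*)=0$. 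This part needs no admissibility.

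\emph{Part (ii).} This is exactly the final assertion of Proposition~\ref{P:revealed-face}, which applies because $\mathcal C$ is admissible.

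\emph{The bound \eqref{E:generic-risk-mixed}.} Apply Lemma~\ref{L:zero-chain-reduction} with $m=N+1$, $f_z=F_{\mathcal C}$ and minimizer $x_*=\widetilde x_*$. Parts~(i) and~(ii) give the class membership and the zero-chain property. The weighted initial condition follows from Lemma~\ref{L:certificate-data}(ii) and $f_z^*=0$:
\[
w_x\norm{\widetilde x_*}^2+\frac{2w_f}{L}F_{\mathcal C}(0)+\frac{w_g}{L^2}\norm{\nabla F_{\mathcal C}(0)}^2=w_xB_x+w_fB_f+w_gB_g=R^2 .
\]
Here $R>0$ is well defined: $B_x,B_f,B_g>0$ by Lemma~\ref{L:certificate-data}(ii), the weights are nonnegative, and they are not all zero. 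The dimension condition $d\geq m+N=2N+1$ is precisely the hypothesis. Lemma~\ref{L:zero-chain-reduction} then gives
\[
\risk{N}{\mathcal P^{\mu,L}_{w,R}(\real^d)}\geq\inf_{x\in S_{N-1}}F_{\mathcal C}(x)\geq\frac{L}{2}\Phi ,
\]
where the last inequality is Lemma~\ref{L:certificate-data}(i).

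There is no real obstacle here; the substance lies in Proposition~\ref{P:revealed-face} and Lemma~\ref{L:certificate-data}. The only things to check carefully are that $S_{N-1}$ refers to the same subspaces of $\real^{N+1}$ in the zero-chain lemma and in the certificate lemma, and that $R^2>0$. The latter holds because every weighted term is nonnegative and at least one weight is positive against a positive $B$.
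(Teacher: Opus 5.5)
Your proposal is correct and follows the paper's proof essentially step for step. You use Lemma~\ref{L:smoothed-interpolant} together with $s_*=\widetilde x_*\in K_{\mathcal C}$ for (i), Proposition~\ref{P:revealed-face} for (ii), and Lemma~\ref{L:zero-chain-reduction} with $m=N+1$ combined with both parts of Lemma~\ref{L:certificate-data} for the bound; your explicit checks that $R^2>0$ and that $f_z^*=0$ are small details the paper leaves implicit.
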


\begin{proof}
For (i), Lemma~\ref{L:smoothed-interpolant}(i) gives
$F_{\mathcal C}\in\Fclass(\real^{N+1})$, and since $\widetilde x_*=s_*\in K_{\mathcal C}$
by~\eqref{D:generic-sites}, Lemma~\ref{L:smoothed-interpolant}(ii) shows
that $\widetilde x_*$ is a minimizer.  Part (ii) is the second claim
of~\eqref{E:revealed-face-projection} in Proposition~\ref{P:revealed-face},
which by~\eqref{D:revealed-subspaces} is the zero-chain condition of
Definition~\ref{D:zero-chain}.

For~\eqref{E:generic-risk-mixed}, apply
Lemma~\ref{L:zero-chain-reduction} to $f_z=F_{\mathcal C}$ with $m=N+1$: its
hypotheses hold by (i), (ii), and~\eqref{D:generic-mixed-budget} together
with Lemma~\ref{L:certificate-data}(ii), and its conclusion combined with
Lemma~\ref{L:certificate-data}(i) gives, for every $d\geq m+N=2N+1$,
\[
    \risk{N}{\mathcal P^{\mu,L}_{w,R}(\real^d)}
    \geq\inf_{x\in S_{N-1}}F_{\mathcal C}(x)
    \geq\frac{L}{2}\Phi.
    \qedhere
\]
\end{proof}

The theorem shows that every admissible chain determines a certified hard
function.  Selecting a suitable member of this hard subfamily therefore
reduces to constructing one admissible chain and evaluating its four scalar
certificate quantities.

\begin{remark}[The Drori--Taylor construction]
Setting $\delta_k=D_k$ for $0\leq k<N$ gives $\pi_k=1$ for every $k$, so
the replacement point used to eliminate the minimizer site reduces to the
single revealed site $s_j$.  Under this specialization, the
admissibility conditions reduce to those of
\cite[Theorem~4]{DroriTaylor2022Oracle}, and the resulting construction
recovers their function-value lower bound.  Thus the additional freedom
$\delta_k\neq D_k$ is precisely what decouples the minimizer coordinates from
the revealed chain in the present family.
\end{remark}

\section{Scalar recurrence and matching equation}
\label{S:scalar-risk-coefficient}

Theorem~\ref{T:intro-main} characterizes the minimax risk implicitly: a
candidate risk coefficient determines a recurrence parameter, the recurrence
parameter generates a backward sequence, and feasibility asks that this
sequence stay well defined while the matching equation holds.  This section
shows that such a coefficient exists.  We first prove that feasibility of the
backward sequence is stable under perturbation of the parameter and passes to
limits under a uniform margin; a continuation argument along the
recurrence-feasible set then forces a sign change of the matching residual,
hence a root.  The section ends by extending the recurrence through the initial boundary, in the form used by both constructions.

\subsection{Properties of the backward recurrence}
\label{S:recurrence-interface}

The backward recurrence has a simple quadratic structure that will be used
repeatedly below.  Each recurrence step preserves a weighted quadratic form,
which yields an invariant along every well-defined suffix of the sequence.
The same calculation also gives the inverse recurrence, allowing the sequence
to be viewed equivalently in either direction.

\begin{lemma}
\label{L:recurrence-interface}
Let $N\geq1$, $q\in[0,1)$, and $r\in(0,1)$.  Run the backward recurrence
of Definition~\ref{D:backward-sequence} from its terminal values for as long
as it is well defined.  For every $j\in\{1,\ldots,N-1\}$ for which
$(a_j,b_j)$ is well defined,
\begin{equation}\label{E:recurrence-interface-isometry}
    d_{j+1}^2+qc_{j+1}^2=1.
\end{equation}
Consequently, for every $j\in\{1,\ldots,N\}$ for which $(a_j,b_j)$ is well
defined,
\begin{equation}\label{E:recurrence-interface-factorizations}
    qa_j^2+b_j^2=r^2.
\end{equation}
Whenever $1\leq j<N$ and $(a_j,b_j)$ is well defined, the inverse recurrence is
\begin{equation}\label{E:recurrence-interface-inverse}
\begin{aligned}
\begin{bmatrix}a_{j+1}\\[1mm]b_{j+1}\end{bmatrix}
=
\begin{bmatrix}
    d_{j+1} & -c_{j+1}\\
    qc_{j+1} & d_{j+1}
\end{bmatrix}
\begin{bmatrix}a_j\\[1mm]b_j\end{bmatrix}.
\end{aligned}
\end{equation}
\end{lemma}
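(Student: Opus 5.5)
The plan is a direct algebraic verification, carried out in three steps.

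\emph{Step 1: the identity~\eqref{E:recurrence-interface-isometry}.} Fix $k=j+1\in\{2,\ldots,N\}$. By the definition~\eqref{D:backward-coupling}, for $(a_{k-1},b_{k-1})$ to be well defined, $c_k$ must be a well-defined real number, so $(1-a_k)(1+\bar q+qa_k)\geq0$. I will expand both terms with $\bar q=1-q$:
\[
d_k^2=(1-q+qa_k)^2=(1-q)^2+2q(1-q)a_k+q^2a_k^2,
\]
and
\[
qc_k^2=q(1-a_k)(2-q+qa_k)=q\bigl[2-q+qa_k-2a_k+qa_k-qa_k^2\bigr]=2q-q^2-2qa_k+2q^2a_k-q^2a_k^2.
\]
Adding these gives $1-2q+q^2+2q-q^2+(2q-2q^2-2q+2q^2)a_k+0\cdot a_k^2=1$. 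This step is pure bookkeeping; the only care needed is to identify the well-definedness hypothesis with real-valuedness of $c_k$.

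\emph{Step 2: the invariant~\eqref{E:recurrence-interface-factorizations}.} Write $M_k$ for the matrix in~\eqref{D:sequence}, and let $G=\operatorname{diag}(q,1)$. I will check that $M_k^{\top}GM_k=(d_k^2+qc_k^2)G$. The $(1,1)$ entry is $qd_k^2+q^2c_k^2$. The $(2,2)$ entry is $qc_k^2+d_k^2$. The off-diagonal entry is $qd_kc_k-qc_kd_k=0$. By Step 1 each of these equals the corresponding entry of $G$, so $qa_{k-1}^2+b_{k-1}^2=qa_k^2+b_k^2$. At $k=N$ the terminal values~\eqref{D:backward-endpoint} give $qr^2+\bar qr^2=r^2$. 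Downward induction from $j=N$ to any $j$ for which $(a_j,b_j)$ is defined then gives~\eqref{E:recurrence-interface-factorizations}, because well-definedness of $(a_j,b_j)$ implies that all intermediate pairs are defined.

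\emph{Step 3: the inverse recurrence~\eqref{E:recurrence-interface-inverse}.} I will multiply the proposed inverse matrix by $M_{j+1}$. The diagonal entries are $d^2+qc^2=1$, again by Step 1. The off-diagonal entries are $dc-cd=0$ and $qcd-dqc=0$. So the proposed matrix is $M_{j+1}^{-1}$, and applying it to $(a_j,b_j)=M_{j+1}(a_{j+1},b_{j+1})$ recovers $(a_{j+1},b_{j+1})$.

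There is no real obstacle. The one subtle point is that $c_{j+1}$ and $d_{j+1}$ depend on $a_{j+1}$ rather than on $a_j$, so the "inverse" must be read with coefficients taken from the later index. That is exactly how the statement is written, so no issue arises.
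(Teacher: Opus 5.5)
Your proposal is correct and follows essentially the same route as the paper: verify $d_k^2+qc_k^2=1$ from the coefficient definitions, deduce that the recurrence matrix preserves the form $qu^2+v^2$ (your Gram-matrix check $M_k^{\top}GM_k=(d_k^2+qc_k^2)G$ is the paper's direct expansion written in matrix form), iterate from the terminal values, and invert the determinant-one matrix. The only differences are cosmetic: you expand the isometry identity as a polynomial in $a_k$, whereas the paper factors it using $1+\bar q+qa_k=1+d_k$.
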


\begin{proof}
We first identify the one-step isometry behind the recurrence.  The coefficient
definitions~\eqref{D:backward-coupling} give
\[
\begin{aligned}
    d_{j+1}^2+qc_{j+1}^2
      &=d_{j+1}^2+q(1-a_{j+1})(1+d_{j+1})\\
      &=d_{j+1}\bigl(d_{j+1}+q(1-a_{j+1})\bigr)
        +q(1-a_{j+1})\\
      &=d_{j+1}+q(1-a_{j+1})=1,
\end{aligned}
\]
which proves~\eqref{E:recurrence-interface-isometry}.  This identity makes
the matrix in~\eqref{D:sequence} preserve the quadratic form $qu^2+v^2$:
\[
\begin{aligned}
    qa_j^2+b_j^2
      &=q\bigl(d_{j+1}a_{j+1}+c_{j+1}b_{j+1}\bigr)^2
        +\bigl(-qc_{j+1}a_{j+1}+d_{j+1}b_{j+1}\bigr)^2\\
      &=\bigl(d_{j+1}^2+qc_{j+1}^2\bigr)
        \bigl(qa_{j+1}^2+b_{j+1}^2\bigr)\\
      &=qa_{j+1}^2+b_{j+1}^2.
\end{aligned}
\]
The terminal values satisfy $qa_N^2+b_N^2=r^2$, so iterating the last
identity proves~\eqref{E:recurrence-interface-factorizations}.

The same isometry identity also gives the reverse step.  The matrix in
\eqref{D:sequence} has determinant one, and hence its inverse is precisely the
matrix in~\eqref{E:recurrence-interface-inverse}.
\end{proof}

The remaining results establish the stability properties needed for the
continuation argument.  For ordered sequences, the recurrence preserves
positivity of the second coordinates; together with the quadratic invariant,
this yields stability under perturbations and under limits that remain
uniformly inside the ordered region.  For $q>0$, we then identify the explicit
radius $r=\sqrt q$ as a canonical interior point from which the continuation
argument of the next subsection can begin.

\begin{corollary}[Second-coordinate positivity]
\label{C:recurrence-positive}
Every ordered paired backward sequence satisfies
$b_k>0$ for $1\leq k\leq N$.
\end{corollary}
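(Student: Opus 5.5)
We need to show $b_k>0$ for every $k$ in an ordered sequence. The definition gives only $b_1>0$ and $1>a_1>\cdots>a_N>0$, and $b_N=\sqrt{\bar q}\,r>0$ comes from the terminal values. The plan is to propagate positivity forward from $k=1$ using the inverse recurrence~\eqref{E:recurrence-interface-inverse} of Lemma~\ref{L:recurrence-interface}, which reads
\[
  b_{k+1}=qc_{k+1}a_k+d_{k+1}b_k,\qquad 1\leq k<N.
\]

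First I would record the signs of the coefficients. Since the sequence is ordered, $0<a_{k+1}<1$. Hence $d_{k+1}=\bar q+qa_{k+1}>0$, and in fact $d_{k+1}\ge\bar q>0$, because $q<1$. Also $1-a_{k+1}>0$ and $1+\bar q+qa_{k+1}>0$, so $c_{k+1}\geq0$ as a nonnegative square root. Moreover $q\geq0$ and $a_k>0$.

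Next comes the induction. Assume $b_k>0$; the base case $k=1$ is part of the ordering condition~\eqref{E:sequence-order}. Then $qc_{k+1}a_k\ge0$ and $d_{k+1}b_k>0$, so their sum $b_{k+1}$ is strictly positive. This covers $k+1$ up to $N$.

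The only point needing care is the validity of the inverse formula itself. Lemma~\ref{L:recurrence-interface} supplies it whenever $(a_k,b_k)$ is well defined with $1\le k<N$, and ordered sequences are well defined through $k=1$ by definition, so this holds. The only other thing to check is that no sign is lost when $q=0$. In that case the first term vanishes and $b_{k+1}=d_{k+1}b_k=b_k$, which is still positive. So there is no real obstacle; the whole argument rests on the strict positivity of $d_{k+1}$.
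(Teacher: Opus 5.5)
Your proof is correct, but it argues in the opposite direction from the paper. The paper does not induct. For each $2\le k\le N$ it reads the first row of the forward recurrence as $c_kb_k=a_{k-1}-d_ka_k=(a_{k-1}-a_k)+qa_k(1-a_k)$. Since $c_k>0$ (because $0<a_k<1$), the strict decrease $a_{k-1}>a_k$ gives $b_k>0$ directly, and $b_1>0$ is used only for $k=1$.

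You instead push $b_1>0$ forward through the second row of the inverse recurrence, $b_{k+1}=qc_{k+1}a_k+d_{k+1}b_k$. This needs only $a_k>0$, $c_{k+1}\ge0$ and $d_{k+1}\ge\bar q>0$, and never uses the strict monotonicity of the $a_k$.

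The two proofs therefore draw on complementary parts of the ordering condition. The paper shows that $b_2,\ldots,b_N>0$ follow from the strict ordering of the $a_k$ alone. Yours shows they follow from $b_1>0$ together with $a_k\in(0,1)$. Your step is essentially the converse of the one in Lemma~\ref{L:backward-stability}(ii), where the first row of the inverse recurrence turns $b_k>0$ into $a_k>a_{k+1}$. The paper's version produces $c_kb_k$ in exactly the form that reappears in \eqref{E:recurrence-boundary-product}.
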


\begin{proof}
The case $k=1$ is part of the ordered-sequence condition.  For
$2\leq k\leq N$, the first row of~\eqref{D:sequence} and
\eqref{D:backward-coupling} give
\[
    c_kb_k=a_{k-1}-d_ka_k
      =(a_{k-1}-a_k)+qa_k(1-a_k)>0.
\]
The ordered-sequence condition gives $0<a_k<1$, hence $c_k>0$, and therefore
$b_k>0$.
\end{proof}
\begin{lemma}
\label{L:backward-stability}
Let $N\geq1$ and $q\in[0,1)$.  Then:
\begin{enumerate}[label=(\roman*),leftmargin=2.2em]
    \item The set $\mathcal O_N(q)$ is open, and $a_k(r)$ and $b_k(r)$
    are continuous on $\mathcal O_N(q)$ for $1\leq k\leq N$.
    \item Let $(r_j)_{j\geq1}$ be a sequence in $\mathcal O_N(q)$ with
    $r_j\to r_*$.  If
    \[
        \liminf_{j\to\infty}
        \min\{1-a_1(r_j),\,b_1(r_j)^2\}>0,
    \]
    then $r_*\in\mathcal O_N(q)$.
\end{enumerate}
\end{lemma}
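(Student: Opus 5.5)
The plan is to treat the recurrence of Definition~\ref{D:backward-sequence} as a finite composition of explicit maps in $r$, and to use that the only obstruction to well-definedness is the square root in $c_k$. Since $1+\bar q+qa_k>0$ whenever $a_k\geq0$, the recurrence is well defined at step $k$ exactly when $a_k\leq1$. On the region where this holds, the step $(a_k,b_k)\mapsto(a_{k-1},b_{k-1})$ is continuous, because $t\mapsto\sqrt t$ is continuous on $[0,\infty)$.

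\emph{Part (i).} Fix $r\in\mathcal O_N(q)$. Ordering gives $0<a_k(r)<1$ for all $k$, so each radicand $(1-a_k)(1+\bar q+qa_k)$ is strictly positive at $r$. Starting from the continuous terminal values $a_N=r$ and $b_N=\sqrt{\bar q}\,r$, I will argue by backward induction on $k$. If $a_k,b_k$ are defined and continuous near $r$, then, because $a_k(r)<1$, they stay in $\{a_k<1\}$ on a neighbourhood of $r$. There $c_k$ and $d_k$ are continuous, and hence so are $a_{k-1},b_{k-1}$. After $N-1$ steps, every $a_k,b_k$ is continuous on a neighbourhood of $r$. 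The ordering conditions~\eqref{E:sequence-order} are finitely many strict inequalities between continuous functions, so they persist on a smaller neighbourhood. This gives both openness and continuity.

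\emph{Part (ii), limit sequence.} By~\eqref{E:recurrence-interface-factorizations}, $b_1(r_j)^2\leq r_j^2$, so the margin forces $r_*>0$. Also $r_j=a_N(r_j)<a_1(r_j)\leq 1-\varepsilon$ for large $j$, so $r_*<1$. I then pass to the limit inductively from $k=N$ down to $1$. Each $a_k(r_j)\in(0,1)$, so the limit of $a_k$ lies in $[0,1]$ and the radicand at $r_*$ is nonnegative. By continuity of the step map on $\{a\leq1\}$, the recurrence at $r_*$ is well defined and $a_k(r_*)=\lim_j a_k(r_j)$, and similarly for $b_k$. Passing to the limit in the inequalities gives $1>1-\varepsilon\geq a_1(r_*)\geq\cdots\geq a_N(r_*)=r_*>0$ and $b_1(r_*)>0$. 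In particular $a_k(r_*)<1$, so $c_k(r_*)>0$ for every $k$.

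\emph{Part (ii), upgrading to strict inequalities.} First I propagate positivity of $b$ forward from $b_1(r_*)>0$ using the inverse recurrence~\eqref{E:recurrence-interface-inverse}. It gives $b_{k+1}=qc_{k+1}a_k+d_{k+1}b_k$ with $c_{k+1}\geq0$, $a_k\geq0$ and $d_{k+1}\geq\bar q>0$, so $b_k(r_*)>0$ for all $k$. Next, the identity from the proof of Corollary~\ref{C:recurrence-positive} gives
\[
  a_{k-1}-a_k=c_kb_k-qa_k(1-a_k).
\]
This is the step I expect to be the main obstacle: the weak inequality $a_{k-1}\geq a_k$ survives the limit, but I must rule out equality. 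Equality would mean $b_k=qa_k(1-a_k)/c_k$, that is,
\[
  b_k=qa_k\sqrt{(1-a_k)/(1+d_k)}.
\]
My first attempt is to show that strict ordering above index $k$ propagates down to index $k-1$. I would feed this equality into the inverse step and the invariant $qa_k^2+b_k^2=r_*^2$ to show that it forces $a_{k-1}=a_k$ to be inherited at the previous index. That would eventually contradict $a_1<1$ or $b_1>0$. Failing this, I would bound the ratio $b_k/a_k$ monotonically along the recurrence, using $b_N/a_N=\sqrt{\bar q}$, to get a lower bound that keeps $c_kb_k-qa_k(1-a_k)$ uniformly positive.
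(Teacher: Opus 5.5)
You have correctly located the only real difficulty, but you have not resolved it, so the proof has a gap. The rest of your argument is sound. Part~(i) is fine. In part~(ii), passing to the limit by continuity of $t\mapsto\sqrt t$ at $0$, instead of the paper's uniform radicand bound $\varepsilon(1+\bar q)$, is fine. Propagating $b_k(r_*)>0$ forward through the second row of the inverse recurrence is also fine. However, the strict inequalities $a_{k-1}(r_*)>a_k(r_*)$ are never proved. Both routes you suggest are only sketched (``that would eventually contradict'', ``failing this''). The first also aims at the wrong target: you do not need to push an equality down to index~$1$. As written, the proposal yields only $a_1(r_*)\geq\cdots\geq a_N(r_*)$, which is weaker than membership in $\mathcal O_N(q)$.

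The missing idea is to use the \emph{first} row of the inverse recurrence~\eqref{E:recurrence-interface-inverse} instead of the forward row. From $a_{k+1}=d_{k+1}a_k-c_{k+1}b_k$ and $1-d_{k+1}=q(1-a_{k+1})$ you get
\[
  a_k-a_{k+1}=q(1-a_{k+1})a_k+c_{k+1}b_k,
  \qquad 1\leq k<N.
\]
Every term here is nonnegative, and $c_{k+1}b_k>0$ because you have already shown $c_{k+1}(r_*)>0$ (from $a_{k+1}(r_*)<1$) and $b_k(r_*)>0$. Your forward identity $a_{k-1}-a_k=c_kb_k-qa_k(1-a_k)$ pairs $c_k$ with the later coordinate $b_k$ and carries a minus sign. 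The inverse identity pairs $c_k$ with the earlier coordinate $b_{k-1}$ and has only plus signs. Equivalently, an equality $a_{k-1}=a_k$ would force $c_kb_{k-1}=0$, contradicting the positivity you already have. This one line is how the paper closes the argument, and with it your proof is complete.
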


\begin{proof}
For part~(i), fix $r_0\in\mathcal O_N(q)$.  By backward induction from
\eqref{D:backward-endpoint}, each coefficient $a_k(r)$ and $b_k(r)$ is
continuous near $r_0$: indeed, \eqref{E:sequence-order} gives
$0<a_{k+1}(r_0)<1$, so every radicand in~\eqref{D:backward-coupling} is
positive at $r_0$.  The radicands and the inequalities in
\eqref{E:sequence-order} therefore persist in a neighborhood of $r_0$.
Thus $\mathcal O_N(q)$ is open, and the coefficients are continuous on it.

For part~(ii), choose
$0<\varepsilon<\liminf_{j\to\infty}
\min\{1-a_1(r_j),b_1(r_j)^2\}$.  Then, for all sufficiently large $j$,
$1-a_1(r_j)\geq\varepsilon$ and $b_1(r_j)^2\geq\varepsilon$.
For such $j$, the invariant gives
$r_j^2=qa_1(r_j)^2+b_1(r_j)^2\geq\varepsilon$, and hence
$r_j\geq\sqrt\varepsilon$.  Together with
$r_j\in\mathcal O_N(q)$ and the terminal identity $a_N(r)=r$ in
\eqref{D:backward-endpoint}, this gives
\[
    \sqrt\varepsilon\leq r_j=a_N(r_j)\leq a_k(r_j)\leq a_1(r_j)
    \leq1-\varepsilon,
    \qquad 1\leq k\leq N,
\]
for all sufficiently large $j$, and hence
$\sqrt\varepsilon\leq r_*\leq1-\varepsilon<1$.  Along the same tail,
$a_k(r_j)\leq a_1(r_j)$, the invariant, and
Corollary~\ref{C:recurrence-positive} give
\[
    b_k(r_j)^2=r_j^2-qa_k(r_j)^2
      \geq r_j^2-qa_1(r_j)^2=b_1(r_j)^2\geq\varepsilon.
\]
Since $b_k(r_j)>0$, we have $b_k(r_j)\geq\sqrt{\varepsilon}$.  Since
$a_{k+1}(r_j)\leq1-\varepsilon$, every radicand
in~\eqref{D:backward-coupling} is at least $\varepsilon(1+\bar q)$ along
the sequence.  Starting from $a_N(r_*)=r_*$, this uniform bound permits the
recurrence at $r_*$ to be defined backward through all indices.  Once this
limiting recurrence is defined, continuity of each recurrence step gives, by
backward induction, $a_k(r_j)\to a_k(r_*)$ and $b_k(r_j)\to b_k(r_*)$.
Passing to the limit in the preceding bounds gives
$\sqrt{\varepsilon}\leq a_k(r_*)\leq1-\varepsilon$ and
$b_k(r_*)\geq\sqrt{\varepsilon}$ for every $k$.  The first row of the inverse
recurrence~\eqref{E:recurrence-interface-inverse} therefore gives
\[
\begin{aligned}
    a_k(r_*)-a_{k+1}(r_*)
      &=(1-d_{k+1}(r_*))a_k(r_*)+c_{k+1}(r_*)b_k(r_*)\\
      &=q(1-a_{k+1}(r_*))a_k(r_*)+c_{k+1}(r_*)b_k(r_*)>0,
      \qquad 1\leq k<N.
\end{aligned}
\]
Together with the well-definedness established above,
\eqref{E:sequence-order} shows that $r_*\in\mathcal O_N(q)$.
\end{proof}

\begin{lemma}[The radius $r=\sqrt q$]\label{L:sqrtq-membership}
Let $N\geq1$ and $q\in(0,1)$.  Then
\[
    \sqrt q\in\mathcal O_N(q).
\]
\end{lemma}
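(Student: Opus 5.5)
The plan is to parametrize the backward sequence by an angle, using the quadratic invariant of Lemma~\ref{L:recurrence-interface}. With $r=\sqrt q$ the invariant reads $qa_k^2+b_k^2=q$. This suggests writing
\[
a_k=\cos\theta_k,\qquad b_k=\sqrt q\,\sin\theta_k .
\]
The terminal values fit this form: $a_N=\sqrt q$ and $b_N=\sqrt{\bar q}\sqrt q$ correspond to $\theta_N=\arccos\sqrt q\in(0,\pi/2)$, with $\sin\theta_N=\sqrt{\bar q}$. We then show that each recurrence step is a rotation of this angle by an amount strictly smaller than the current angle. This keeps every $\theta_k$ in $(0,\pi/2)$ and strictly decreasing as $k$ decreases.

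\emph{The step is a rotation.} Suppose $(a_k,b_k)=(\cos\theta_k,\sqrt q\sin\theta_k)$ with $\theta_k\in(0,\pi/2]$. Then $a_k\in[0,1)$, so both radicands in~\eqref{D:backward-coupling} are positive and $c_k>0$, $d_k>0$. By~\eqref{E:recurrence-interface-isometry} (whose computation applies at any step where $c_k$ is defined) we have $d_k^2+qc_k^2=1$. Hence there is $\varphi_k\in(0,\pi/2)$ with $\cos\varphi_k=d_k$ and $\sin\varphi_k=\sqrt q\,c_k$; here $\varphi_k>0$ needs $q>0$, which holds. Substituting into~\eqref{D:sequence} gives
\[
a_{k-1}=d_k\cos\theta_k+c_k\sqrt q\sin\theta_k=\cos(\theta_k-\varphi_k),
\]
and
\[
b_{k-1}=-qc_k\cos\theta_k+d_k\sqrt q\sin\theta_k=\sqrt q\sin(\theta_k-\varphi_k).
\]
So $\theta_{k-1}=\theta_k-\varphi_k$.

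\emph{The key inequality $\varphi_k<\theta_k$.} This is the only real point. From $d_k=\bar q+qa_k$ we get
\[
1-\cos\varphi_k=q(1-a_k)=q(1-\cos\theta_k)<1-\cos\theta_k,
\]
because $q<1$ and $\theta_k>0$. Both angles lie in $(0,\pi/2]$, where cosine is strictly decreasing, so $\varphi_k<\theta_k$. Therefore $0<\theta_{k-1}<\theta_k$.

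\emph{Conclusion by induction.} Induct downward from $k=N$. The recurrence stays well defined through $k=1$, and
\[
0<\theta_1<\theta_2<\cdots<\theta_N=\arccos\sqrt q<\pi/2.
\]
It follows that $1>a_1>\cdots>a_N=\sqrt q>0$ and $b_1=\sqrt q\sin\theta_1>0$. These are exactly the ordering conditions~\eqref{E:sequence-order}. Since also $\sqrt q\in(0,1)$, we conclude $\sqrt q\in\mathcal O_N(q)$.

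I expect no serious obstacle. The only delicate step is guaranteeing that the angle never reaches $0$, and the comparison $1-\cos\varphi_k=q(1-\cos\theta_k)$ settles this directly.
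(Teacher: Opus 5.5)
Your proposal is correct and follows essentially the same route as the paper's proof. Both write the pair as a point on the ellipse $qa^2+b^2=q$, show that each backward step is a rotation by $\varphi$ with $\cos\varphi=d_k$ and $\sin\varphi=\sqrt q\,c_k$, and obtain $0<\varphi<\theta$ from $1-d_k=q(1-a_k)$, which is the paper's observation $a_{k}<d_{k}<1$ stated in different form.
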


\begin{proof}
Set $y_k:=b_k/\sqrt q$.  At the terminal index,
\[
    (a_N,y_N)=(\sqrt q,\sqrt{\bar q}),
\]
so this pair lies in the open first quadrant of the unit circle.

Suppose that, for some $k<N$,
\[
    a_{k+1}=\cos\theta,
    \qquad
    y_{k+1}=\sin\theta,
    \qquad
    0<\theta<\frac{\pi}{2}.
\]
Since $d_{k+1}=\bar q+qa_{k+1}$,
\[
    d_{k+1}-a_{k+1}
      =\bar q(1-a_{k+1})>0,
    \qquad
    1-d_{k+1}
      =q(1-a_{k+1})>0.
\]
Thus $a_{k+1}<d_{k+1}<1$, so
$d_{k+1}=\cos\phi$ for some $0<\phi<\theta$.  Moreover,
$c_{k+1}>0$, and~\eqref{E:recurrence-interface-isometry} gives
\[
    \sqrt q\,c_{k+1}=\sin\phi.
\]
Dividing the second row of~\eqref{D:sequence} by $\sqrt q$ therefore yields
\[
\begin{bmatrix}a_k\\ y_k\end{bmatrix}
=
\begin{bmatrix}
    \cos\phi&\sin\phi\\
    -\sin\phi&\cos\phi
\end{bmatrix}
\begin{bmatrix}
    \cos\theta\\ \sin\theta
\end{bmatrix}
=
\begin{bmatrix}
    \cos(\theta-\phi)\\
    \sin(\theta-\phi)
\end{bmatrix}.
\]
Since $0<\theta-\phi<\theta<\pi/2$,
\[
    0<a_{k+1}<a_k<1,
    \qquad
    y_k>0.
\]
Backward induction therefore gives
$1>a_1>\cdots>a_N>0$ and $b_1>0$, so
$\sqrt q\in\mathcal O_N(q)$.
\end{proof}

\subsection{Matching equation and existence}
\label{S:risk-coefficient-matching}

The matching equation~\eqref{E:mixed-matching-equation} is the boundary
condition imposed by the weight vector $w$ at the opposite end of the backward
sequence from its horizon.  Its residual, defined first, measures how far the
first pair of an ordered sequence is from satisfying this condition.  The
lemma below supplies margins that prevent a limiting radius from leaving
$\mathcal O_N(q)$.

\begin{definition}[Matching residual]
\label{D:mixed-matching-residual}
Fix $w=(w_x,w_f,w_g)$ satisfying the simplex
conditions~\eqref{D:weight-simplex}.  For $\eta\geq0$ such that
$r_{\eta,w}\in\mathcal O_N(q)$, the \emph{matching residual} is
\begin{equation}\label{E:mixed-matching-residual}
    H_w(\eta)
      :=1-a_1-\eta(w_x+w_f)-\sqrt{\frac{\eta}{\bar q}}\,b_1,
\end{equation}
where $a_1,b_1$ are the first pair of the backward sequence associated with
$r_{\eta,w}$.
\end{definition}

By Lemma~\ref{L:backward-stability}(i) and the continuity of
$\eta\mapsto r_{\eta,w}$, the set
$\{\eta\geq0:r_{\eta,w}\in\mathcal O_N(q)\}$ is
relatively open in $[0,\infty)$ and $H_w$ is continuous on it.  By
Definition~\ref{D:risk-coefficient}, the risk coefficients are exactly the
positive zeros of $H_w$ in this set.  The next lemma provides the margin
needed to continue the recurrence while the residual is nonnegative.

\begin{lemma}[Margin under a nonnegative residual]
\label{L:existence-margin}
Let $N\geq1$, $q\in[0,1)$, let $w$ satisfy the simplex
conditions~\eqref{D:weight-simplex}, and let $\eta>0$ satisfy
$r_{\eta,w}\in\mathcal O_N(q)$ and $H_w(\eta)\geq0$.  Then
\begin{equation}\label{E:existence-uniform-bounds}
    \min\{1-a_1,\,b_1^2\}
      \geq\eta\left(w_x+\frac{q^2}{\bar q}\right)>0.
\end{equation}
\end{lemma}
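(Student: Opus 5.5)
The plan is to combine the residual inequality with the quadratic invariant of Lemma~\ref{L:recurrence-interface} at $j=1$. First this gives a lower bound on $b_1^2$ that is linear in $b_1$. Solving that quadratic inequality bounds $b_1$ from below, and both claimed bounds then follow. Throughout, write $s:=1-a_1>0$ and $t:=\sqrt{\eta/\bar q}\,b_1$. Here $t>0$ because $b_1>0$ for an ordered sequence.

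\emph{Step 1 (inputs).} The hypothesis $H_w(\eta)\geq0$ reads
\[
  s\;\geq\;\eta(w_x+w_f)+t .
\]
By \eqref{E:recurrence-interface-factorizations} with $j=1$ and the definition \eqref{D:risk-coefficient-induced-radius} of $r_{\eta,w}$,
\[
  b_1^2=r_{\eta,w}^2-qa_1^2=q(1-a_1^2)+\eta(\bar q w_x-qw_f)=q\,s(1+a_1)+\eta\bar q w_x-\eta q w_f .
\]

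\emph{Step 2 (quadratic inequality for $b_1$).} Substitute the lower bound on $s$ into the term $q\,s(1+a_1)$; this is legitimate because $q(1+a_1)\geq0$. We obtain
\[
  b_1^2\;\geq\;\eta w_x\bigl(\bar q+q(1+a_1)\bigr)+\eta q w_f a_1+q(1+a_1)\,t .
\]
Now $a_1>0$ gives $\bar q+q(1+a_1)\geq1$, $\eta qw_fa_1\geq0$ and $1+a_1\geq1$. Hence
\[
  b_1^2\;\geq\;\eta w_x+q\sqrt{\eta/\bar q}\,b_1 .
\]
This is a quadratic inequality in $b_1$. Since $b_1>0$, $b_1$ must be at least the positive root of $u^2-q\sqrt{\eta/\bar q}\,u-\eta w_x$. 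That root is at least $q\sqrt{\eta/\bar q}$, because $\eta w_x\geq0$. So $b_1\geq q\sqrt{\eta/\bar q}$, equivalently $t\geq \eta q/\bar q$.

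\emph{Step 3 (conclusion).}
\begin{itemize}
\item Bound on $s$: insert $t\geq\eta q/\bar q$ into the residual inequality and drop $w_f\geq0$. This gives
\[
  1-a_1\;\geq\;\eta w_x+\eta q^2/\bar q .
\]
\item Bound on $b_1^2$: insert $q\,t\geq\eta q^2/\bar q$ into the Step 2 inequality. This gives
\[
  b_1^2\;\geq\;\eta w_x+\eta q^2/\bar q .
\]
\end{itemize}
Together these are \eqref{E:existence-uniform-bounds}.

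\emph{Strict positivity.} We have $\eta>0$, and $w_x+q^2/\bar q>0$ exactly when $q>0$ or $w_x>0$. This is the standing hypothesis of Theorem~\ref{T:intro-main}; in the excluded case $q=0=w_x$ only the non-strict inequality survives. So this hypothesis has to be carried into the lemma's use.

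\emph{Main obstacle.} The only nonroutine step is Step 2. The crude bound $b_1^2\geq\eta w_x$ alone gives no $q^2/\bar q$ margin. The margin appears only after keeping the $q(1+a_1)\,t$ term and resolving the resulting self-referential inequality in $b_1$.
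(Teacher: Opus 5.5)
Your derivation of the two inequalities is correct and follows essentially the paper's route. The paper bounds $qa_1^2\le qa_1$ and rewrites $q(1-a_1)$ through the residual. You instead substitute the residual bound into $q(1-a_1)(1+a_1)$ and then discard the $qa_1$-terms. Both arrive at $b_1^2\ge\eta w_x+q\sqrt{\eta/\bar q}\,b_1$. The paper then drops $\eta w_x$ and divides by $b_1>0$ instead of solving the quadratic, and finishes identically. One small elision: inserting $t\ge\eta q/\bar q$ into the residual inequality gives $1-a_1\ge\eta(w_x+q/\bar q)$, and you still need $q\ge q^2$ to reach the stated form.

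The genuine gap is the strict positivity. The lemma asserts $\eta(w_x+q^2/\bar q)>0$ under its own hypotheses. You instead say the condition ``$q>0$ or $w_x>0$'' must be imported from Theorem~\ref{T:intro-main}, and that in the case $q=w_x=0$ only the non-strict inequality survives. That is not so, because the hypothesis $r_{\eta,w}\in\mathcal O_N(q)$ already excludes this case. At $q=0$ we have $\bar q=1$ and $r_{\eta,w}^2=\eta w_x$. Since $r_{\eta,w}$ is defined only when $r_{\eta,w}^2>0$, and $\mathcal O_N(q)\subset(0,1)$, this forces $\eta w_x>0$, hence $w_x>0$. Equivalently, at $q=0$ the invariant gives $b_1^2=r_{\eta,w}^2=\eta w_x$, and ordering gives $b_1>0$. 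So when $q=w_x=0$ the lemma is vacuous rather than weaker, and no extra hypothesis needs to be carried into its applications. Adding this one-line observation completes your proof.
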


\begin{proof}
Since $r_{\eta,w}\in\mathcal O_N(q)$, we have $0<a_1<1$ and
$b_1>0$.  From $H_w(\eta)\geq0$ and $w_f\geq0$,
\[
    1-a_1
      =H_w(\eta)+\eta(w_x+w_f)
        +\sqrt{\frac{\eta}{\bar q}}\,b_1
      \geq
        \eta w_x+\sqrt{\frac{\eta}{\bar q}}\,b_1.
\]
On the other hand, the invariant
\eqref{E:recurrence-interface-factorizations} and
\eqref{D:risk-coefficient-induced-radius} give
\[
    r_{\eta,w}^2=q+\eta(\bar q w_x-qw_f),
\]
and therefore
\[
\begin{aligned}
    b_1^2
      &=r_{\eta,w}^2-qa_1^2\\
      &\geq r_{\eta,w}^2-qa_1\\
      &=q(1-a_1)+\eta(\bar q w_x-qw_f)\\
      &=qH_w(\eta)+\eta w_x
        +q\sqrt{\frac{\eta}{\bar q}}\,b_1\\
      &\geq
        \eta w_x+q\sqrt{\frac{\eta}{\bar q}}\,b_1.
\end{aligned}
\]
Since $b_1>0$, the last inequality implies
$b_1\geq q\sqrt{\eta/\bar q}$.  Substituting this bound into the two
preceding inequalities yields
\[
    b_1^2
      \geq\eta\left(w_x+\frac{q^2}{\bar q}\right)
\]
and
\[
    1-a_1
      \geq\eta\left(w_x+\frac{q}{\bar q}\right)
      \geq\eta\left(w_x+\frac{q^2}{\bar q}\right),
\]
where the last inequality uses $0\leq q<1$.  This proves
\eqref{E:existence-uniform-bounds}.

It remains to verify that the common lower bound is positive.  If $q>0$ this
is immediate.  If $q=0$, then $r_{\eta,w}\in\mathcal O_N(0)$ and
\eqref{D:risk-coefficient-induced-radius} give
$r_{\eta,w}^2=\eta w_x>0$, so $w_x>0$.
\end{proof}

\begin{proposition}[Existence of a risk coefficient]
\label{P:mixed-existence}
Let $q\in[0,1)$, $N\geq1$, and let $w$ satisfy the simplex
conditions~\eqref{D:weight-simplex}.  Assume that either $q>0$ or $w_x>0$.
Then a risk coefficient for $w$ in the sense of
Definition~\ref{D:risk-coefficient} exists.
\end{proposition}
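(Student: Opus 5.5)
Set $E:=\{\eta\geq0:r_{\eta,w}\in\mathcal O_N(q)\}$. The plan is a continuation argument. Start at a point of $E$ where $H_w$ is positive, move along $\eta$ while $H_w$ stays nonnegative, and show that the residual must eventually hit zero before feasibility can be lost.

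\emph{Starting point.} If $q>0$, then $r_{0,w}=\sqrt q$, and Lemma~\ref{L:sqrtq-membership} gives $0\in E$. There $H_w(0)=1-a_1>0$. If $q=0$ and $w_x>0$, then $r_{\eta,w}^2=\eta w_x$ and $r_{0,w}=0\notin(0,1)$, so we start at small $\eta>0$ instead. For $q=0$ the recurrence gives $d_k=1$ and $c_k=\sqrt{1-a_k}$, with $b_k=b_N=r$ constant and $a_{k-1}=a_k+\sqrt{1-a_k}\,r$. Hence for small $r$ the sequence is well defined and ordered, with $a_1=O(r)$. Then $H_w(\eta)=1-O(\sqrt\eta)>0$ for small $\eta>0$. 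In both cases, Lemma~\ref{L:backward-stability}(i) makes $E$ relatively open and $H_w$ continuous on it, so we obtain an interval $[\eta_0,\eta_0+\epsilon)\subset E$ on which $H_w>0$.

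\emph{Continuation.} Let
\[
\eta^\dagger:=\sup\{\eta\geq\eta_0:[\eta_0,\eta]\subset E,\ H_w>0\text{ on }[\eta_0,\eta]\}.
\]
First, $\eta^\dagger<\infty$. Suppose not. For $\eta\in E$ we have $b_1>0$ and $a_1>0$, so $H_w(\eta)\leq1-\eta(w_x+w_f)$. If $w_x+w_f>0$ this is eventually negative. If $w_x+w_f=0$, so $w_g=1$, then $q>0$ and $r_{\eta,w}^2=q$ is constant, so $b_1$ is a fixed positive number and $H_w=1-a_1-\sqrt{\eta/\bar q}\,b_1\to-\infty$. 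A second bound holds when $w_x>0$, whether or not $q>0$: on $E$ we need $r_{\eta,w}<1$. The $w_f$ term can only decrease $r$, so this is no contradiction when $w_f>0$. The simplest route is to rely on the bound $H_w\leq1-\eta(w_x+w_f)$, or on the constant-$b_1$ argument, so the residual cannot remain positive forever. Hence $\eta^\dagger<\infty$.

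Now take $\eta_j\uparrow\eta^\dagger$ inside the interval. At each $\eta_j$ we have $H_w(\eta_j)>0$, so Lemma~\ref{L:existence-margin} gives
\[
\min\{1-a_1,\,b_1^2\}\geq\eta_j\Bigl(w_x+\frac{q^2}{\bar q}\Bigr)\geq\eta_0\Bigl(w_x+\frac{q^2}{\bar q}\Bigr)>0.
\]
When $q=0$ with $\eta_0>0$, this is positive because $w_x>0$. When $q>0$ and $\eta_0=0$, we use $\eta_j\geq\eta_1>0$ instead. The margin is therefore uniform in $j$. The radii $r_{\eta_j,w}$ converge to $r_{\eta^\dagger,w}$, where this limit is positive, as the margin together with the invariant show. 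Lemma~\ref{L:backward-stability}(ii) then gives $\eta^\dagger\in E$, and continuity gives $H_w(\eta^\dagger)\geq0$.

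\emph{Conclusion.} If $H_w(\eta^\dagger)>0$, then openness of $E$ and continuity of $H_w$ extend the interval past $\eta^\dagger$, contradicting the definition of the supremum. Therefore $H_w(\eta^\dagger)=0$ with $\eta^\dagger>0$ and $r_{\eta^\dagger,w}\in\mathcal O_N(q)$. This means $\eta^\dagger$ is a risk coefficient.

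The main obstacle is the boundedness step $\eta^\dagger<\infty$ together with the start when $q=0$. We need to verify that $H_w$ cannot stay positive while $E$ persists, which the case split on whether $w_x+w_f$ vanishes handles. We also need the small-$r$ expansion for $q=0$, which requires checking orderedness of the explicit $q=0$ recurrence.
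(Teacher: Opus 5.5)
Your proof is correct. For $q>0$ it is the paper's argument: continuation from $r_{0,w}=\sqrt q$ (Lemma~\ref{L:sqrtq-membership}), the uniform margin of Lemma~\ref{L:existence-margin} along the interval where the residual is nonnegative, and Lemma~\ref{L:backward-stability}(ii) to keep the limiting radius ordered. The only difference there is framing. You show that the first point where positivity of $H_w$ fails is itself a zero. The paper instead argues by contradiction that $H_w$ must become negative inside the component $J_w=[0,\eta_*)$ and then applies the intermediate value theorem. The two are equivalent, and yours directly produces the smallest root in the component.

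The genuine difference is the case $q=0$. There $r_{0,w}=0\notin(0,1)$, so the paper does not run the continuation at all. It simply cites the explicit coefficient of Proposition~\ref{P:convex-weighted-risk-coefficient}, whose verification rests on the $\Theta_k$ recurrence. You instead start the same continuation at a small $\eta_0>0$, after checking that the $q=0$ recurrence is ordered for small $r$. This keeps the existence proof self-contained and uniform in $q$, at the cost of giving no closed form.

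Two small corrections. At $q=0$ one has $c_k=\sqrt{2(1-a_k)}$ and $a_{k-1}=a_k+\sqrt{2(1-a_k)}\,r$, not $\sqrt{1-a_k}$. The slip is harmless, since $a_1=O(r)$ either way. Also, your case split for $\eta^\dagger<\infty$ works but is unnecessary. On the positivity interval, Lemma~\ref{L:existence-margin} already gives $\eta\bigl(w_x+q^2/\bar q\bigr)\leq 1-a_1<1$, and this is how the paper bounds the endpoint in all cases. The digression about $r_{\eta,w}<1$ can be dropped.
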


\begin{proof}
If $q=0$, then $w_x>0$ by assumption, and
Proposition~\ref{P:convex-weighted-risk-coefficient} gives a risk coefficient.
Hence assume that $q>0$.

\emph{The continuation component.}
By Lemma~\ref{L:sqrtq-membership}, $\sqrt q\in\mathcal O_N(q)$.  Since
$r_{0,w}=\sqrt q$, \eqref{E:mixed-matching-residual} gives
$H_w(0)=1-a_1(\sqrt q)>0$.  Let $J_w$ denote the connected component of
\[
    \{\eta\geq0:r_{\eta,w}\in\mathcal O_N(q)\}
\]
containing zero.  By Lemma~\ref{L:backward-stability}(i) and the continuity
of $\eta\mapsto r_{\eta,w}$, this set is relatively open in $[0,\infty)$.
Hence
\[
    J_w=[0,\eta_*)
    \qquad\text{for some }\eta_*\in(0,\infty].
\]

We show that the residual must become negative before the component ends.
We argue by contradiction: if the residual remained nonnegative on $J_w$,
Lemma~\ref{L:existence-margin} would keep the paired sequence uniformly away
from the boundary of $\mathcal O_N(q)$, and
Lemma~\ref{L:backward-stability} would extend $J_w$ beyond its endpoint.

\emph{Continuation past the endpoint.}
Suppose, toward a contradiction, that $H_w(\eta)\geq0$ for all $\eta\in J_w$.
Lemma~\ref{L:existence-margin} applies at every $\eta\in(0,\eta_*)$.
Since $1-a_1<1$, \eqref{E:existence-uniform-bounds} gives
\[
    \eta<\left(w_x+\frac{q^2}{\bar q}\right)^{-1},
\]
so $\eta_*<\infty$.

Choose a sequence $0<\eta_j\uparrow\eta_*$, put
$r_j:=r_{\eta_j,w}$, and set
\[
    \varepsilon:=\eta_1\left(w_x+\frac{q^2}{\bar q}\right)>0.
\]
By~\eqref{E:existence-uniform-bounds},
\[
    1-a_1(r_j)\geq\varepsilon,
    \qquad
    b_1(r_j)^2\geq\varepsilon,
    \qquad j\geq1.
\]
The invariant~\eqref{E:recurrence-interface-factorizations} therefore gives
\[
    r_j^2=qa_1(r_j)^2+b_1(r_j)^2\geq\varepsilon.
\]
Since $r_j^2=q+\eta_j(\bar q w_x-qw_f)$, passing to the limit yields
\[
    q+\eta_*(\bar q w_x-qw_f)\geq\varepsilon>0.
\]
Thus $r_{\eta_*,w}$ is well defined, and continuity of the positive square
root gives $r_j\to r_{\eta_*,w}$.  Lemma~\ref{L:backward-stability}(ii) now
shows that $r_{\eta_*,w}\in\mathcal O_N(q)$.  Part~(i) of the same lemma,
together with the continuity of $\eta\mapsto r_{\eta,w}$, then gives
$r_{\eta,w}\in\mathcal O_N(q)$ for every $\eta$ in some interval
$[\eta_*,\eta_*+\varepsilon')$.  This contradicts the definition of $J_w$
and $\eta_*$.  Hence the residual cannot remain nonnegative on $J_w$.

\emph{Sign change and matching root.}
There is therefore some $\eta_+\in J_w$ with $H_w(\eta_+)<0$.  Since
$[0,\eta_+]\subseteq J_w$, the continuity of $H_w$ noted above applies on
this interval, and $H_w(0)>0>H_w(\eta_+)$.  The intermediate value theorem gives
$\eta\in(0,\eta_+)$ with $H_w(\eta)=0$, and by
Definition~\ref{D:risk-coefficient} this zero is a risk coefficient.
\end{proof}

\subsection{Initial coupling}
\label{S:recurrence-matched}

The recurrence is propagated from the horizon, whereas both the lower and
upper constructions connect it to the weighted data at the initial point.
For $\eta\geq0$ with $r_{\eta,w}\in\mathcal O_N(q)$, let
$((a_k,b_k))_{k=1}^N$ be the associated paired backward sequence.  We extend
the recurrence coefficients to index $1$ by setting
\begin{equation}\label{D:recurrence-interface-coefficients}
\begin{aligned}
    c_1&:=b_1+\sqrt{\bar q\eta},\\
    d_1&:=\bar q+qa_1,
\end{aligned}
\end{equation}
and continue the matrix step once more by setting
\begin{equation}\label{D:recurrence-boundary}
\begin{bmatrix}a_0\\[1mm]b_0\end{bmatrix}
:=
\begin{bmatrix}
    d_1 & c_1\\
    -qc_1 & d_1
\end{bmatrix}
\begin{bmatrix}a_1\\[1mm]b_1\end{bmatrix}.
\end{equation}
The boundary pair is not part of the backward sequence itself; it packages
the weighted initial condition in the same matrix form used by the interior
recurrence.

\begin{lemma}[Weighted initial identities]
\label{L:recurrence-interface-matched}
Let $N\geq1$, $q\in[0,1)$, let $w$ satisfy the simplex
conditions~\eqref{D:weight-simplex}, and let $\eta>0$ satisfy
$r_{\eta,w}\in\mathcal O_N(q)$.
Then the boundary quantities above satisfy
\begin{subequations}\label{E:recurrence-interface-initial}
\begin{align}
    a_0&=1-\eta w_f-\bar q H_w(\eta),
    \label{E:recurrence-interface-a0}\\
    b_0&=\bar q\,b_1-qa_1\sqrt{\bar q\eta},
    \label{E:recurrence-interface-b0}\\
    c_1b_1&=(1-a_1)(1+qa_1)
       -\eta w_f-\bar q H_w(\eta),\notag\\
    c_1^2&=(1-a_1)(1+d_1)
       +\eta\bigl(\bar q w_g-w_f\bigr)-2\bar q H_w(\eta).
    \label{E:recurrence-interface-c1-square}
\end{align}
\end{subequations}
\end{lemma}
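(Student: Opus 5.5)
This is a direct algebraic verification from the definitions~\eqref{D:recurrence-interface-coefficients}--\eqref{D:recurrence-boundary}. The only inputs needed are the residual~\eqref{E:mixed-matching-residual} and the quadratic invariant~\eqref{E:recurrence-interface-factorizations} at $j=1$. Using~\eqref{D:risk-coefficient-induced-radius}, that invariant reads
\[
b_1^2=q+\eta(\bar q w_x-qw_f)-qa_1^2 .
\]
Throughout, $\eta(w_x+w_f)+\sqrt{\eta/\bar q}\,b_1$ is replaced by $1-a_1-H_w(\eta)$, using $w_x=1-w_f-w_g$ where weights must be redistributed.

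\textbf{Identity for $b_0$.} Expand the second row:
\[
b_0=-qc_1a_1+d_1b_1=-qa_1b_1-qa_1\sqrt{\bar q\eta}+(\bar q+qa_1)b_1 .
\]
The terms $\pm qa_1b_1$ cancel, leaving~\eqref{E:recurrence-interface-b0}.

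\textbf{Identity for $c_1b_1$.} Write $c_1b_1=b_1^2+\sqrt{\bar q\eta}\,b_1$. From the residual definition, $\sqrt{\bar q\eta}\,b_1=\bar q\bigl(1-a_1-\eta(w_x+w_f)-H_w\bigr)$. Substituting the invariant for $b_1^2$, the $w_x$ terms cancel: $\eta\bar q w_x$ against $-\bar q\eta w_x$. The $w_f$ terms combine to $-q\eta w_f-\bar q\eta w_f=-\eta w_f$. The rest is
\[
q-qa_1^2+\bar q(1-a_1)=(1-a_1)(q+qa_1+\bar q)=(1-a_1)(1+qa_1),
\]
which gives the third identity.

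\textbf{Identity for $a_0$.} Expand $a_0=d_1a_1+c_1b_1$ and insert the previous identity. Then check that
\[
(\bar q+qa_1)a_1+(1-a_1)(1+qa_1)=1 ,
\]
which holds since $\bar q a_1+qa_1^2+1+qa_1-a_1-qa_1^2=1$. This yields~\eqref{E:recurrence-interface-a0}.

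\textbf{Identity for $c_1^2$.} Write
\[
c_1^2=b_1^2+2\sqrt{\bar q\eta}\,b_1+\bar q\eta=2c_1b_1-b_1^2+\bar q\eta .
\]
Substitute the $c_1b_1$ identity and the invariant for $b_1^2$. The constant part is
\[
2(1-a_1)(1+qa_1)-q(1-a_1^2)=(1-a_1)(2+qa_1-q)=(1-a_1)(1+d_1).
\]
The $\eta$-part is $-2\eta w_f-\eta(\bar q w_x-qw_f)+\bar q\eta$. Using $1-w_x=w_f+w_g$, this equals
\[
\eta\bigl(-2w_f+qw_f+\bar q w_f+\bar q w_g\bigr)=\eta(\bar q w_g-w_f).
\]
The residual contributes $-2\bar q H_w$, which gives~\eqref{E:recurrence-interface-c1-square}.

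The only mild obstacle is bookkeeping: the simplex normalization $w_x+w_f+w_g=1$ must be used exactly once, in the $c_1^2$ step. The positivity hypotheses ($\eta>0$, ordered sequence) serve only to make all quantities well defined.
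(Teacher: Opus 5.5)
Your proposal is correct and follows essentially the same route as the paper's proof. Both use the residual to express $\sqrt{\bar q\eta}\,b_1$ and the invariant $b_1^2=r_{\eta,w}^2-qa_1^2$ to obtain $c_1b_1$, then the two rows of the boundary step for $a_0$ and $b_0$, and finally $c_1^2=2c_1b_1-b_1^2+\bar q\eta$ with the simplex normalization used once; you only verify the $b_0$ identity first instead of last.
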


The boundary matrix step also extends two recurrence identities used by both
the hard-instance construction and its certificate calculation.
\begin{lemma}[Boundary continuation identities]
\label{L:recurrence-boundary-identities}
Let $N\geq1$, $q\in[0,1)$, let $w$ satisfy the simplex
conditions~\eqref{D:weight-simplex}, and let $\eta>0$ satisfy
$r_{\eta,w}\in\mathcal O_N(q)$.
Use the associated backward sequence and boundary quantities from
Definition~\ref{D:backward-sequence} and
\eqref{D:recurrence-interface-coefficients}--\eqref{D:recurrence-boundary}.
Then, for $0\leq k<N$,
\begin{equation}\label{E:recurrence-boundary-cross}
    a_kb_{k+1}-a_{k+1}b_k
      =r_{\eta,w}^2c_{k+1}.
\end{equation}
Moreover, for $0\leq k<N$,
\begin{equation}\label{E:recurrence-boundary-product}
    (1-a_{k+1})(1+qa_{k+1})-(1-a_k)=b_{k+1}c_{k+1}.
\end{equation}
\end{lemma}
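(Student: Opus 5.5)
Both identities reduce to the matrix structure of the recurrence, extended to $k=0$ by the boundary step \eqref{D:recurrence-boundary}. The plan is to verify each identity one step at a time, using only the matrix form of that step and the invariant $qa_k^2+b_k^2=r^2$ with $r:=r_{\eta,w}$. For $1\le k<N$ the invariant is \eqref{E:recurrence-interface-factorizations}. For $k=0$ the same invariant is not needed, because $(a_0,b_0)$ enters only through the explicit matrix step.

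\emph{Cross identity~\eqref{E:recurrence-boundary-cross}.} For every $0\le k<N$ we have $a_k=d_{k+1}a_{k+1}+c_{k+1}b_{k+1}$ and $b_k=-qc_{k+1}a_{k+1}+d_{k+1}b_{k+1}$. This holds by \eqref{D:sequence} for $k\ge1$ and by \eqref{D:recurrence-boundary} for $k=0$. Substituting gives
\[
a_kb_{k+1}-a_{k+1}b_k
=d_{k+1}a_{k+1}b_{k+1}+c_{k+1}b_{k+1}^2+qc_{k+1}a_{k+1}^2-d_{k+1}a_{k+1}b_{k+1}
=c_{k+1}\bigl(qa_{k+1}^2+b_{k+1}^2\bigr).
\]
The $d$-terms cancel, so no isometry relation is needed, and the bracket equals $r^2$ by \eqref{E:recurrence-interface-factorizations} applied at index $k+1\in\{1,\dots,N\}$. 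This handles $k=0$ uniformly, even though $c_1$ is defined differently there.

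\emph{Product identity~\eqref{E:recurrence-boundary-product}.} Again using only the first row, $a_k=d_{k+1}a_{k+1}+c_{k+1}b_{k+1}$ with $d_{k+1}=\bar q+qa_{k+1}$, valid for all $0\le k<N$ since $d_1$ was also defined by that formula. Then
\[
1-a_k+b_{k+1}c_{k+1}=1-(\bar q+qa_{k+1})a_{k+1}=1-a_{k+1}+qa_{k+1}-qa_{k+1}^2=(1-a_{k+1})(1+qa_{k+1}),
\]
which rearranges to the claim. This is the same computation used in Corollary~\ref{C:recurrence-positive}, now extended to the boundary.

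No step is a real obstacle. The only point to check is that the $k=0$ case uses nothing about $c_1$ beyond the matrix form of \eqref{D:recurrence-boundary}. In particular, $d_1^2+qc_1^2=1$ does not hold there, and the argument above never invokes it. The hypotheses $\eta>0$ and $r_{\eta,w}\in\mathcal O_N(q)$ serve only to ensure that the sequence and $c_1$ are well defined.
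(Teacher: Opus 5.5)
Your proposal is correct and follows the paper's proof essentially verbatim. Both arguments write each step $k+1\to k$ in matrix form, using \eqref{D:sequence} for $k\geq1$ and \eqref{D:recurrence-boundary} for $k=0$. The cross identity then follows by cancelling the $d$-terms and applying the invariant \eqref{E:recurrence-interface-factorizations} at index $k+1$, and the product identity follows by rearranging the first row with $d_{k+1}=\bar q+qa_{k+1}$.
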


The proofs of the two lemmas are given in
Appendix~\ref{A:boundary-identities}.

\section{The hard instance}
\label{S:lower-realization}

This section carries out the second stage of the lower-bound argument.
Section~\ref{S:generic} certifies a lower bound for every admissible chain;
here we construct one particular chain from a candidate $\eta$ with
$r_{\eta,w}\in\mathcal O_N(q)$ and evaluate its certificate.  The chain is
chosen so that its certificate can be evaluated explicitly in terms of the matching
residual; rescaling to the prescribed budget then gives the
lower bound of Theorem~\ref{T:intro-main}.

\subsection{Certificate decomposition}
\label{S:certificate-decomposition}

Proposition~\ref{P:generic-hard-function} bounds the risk below by the
certificate value $\Phi$ of any admissible chain, and the hard instance
should make this bound as large as possible relative to the budget
$R^2=w_xB_x+w_fB_f+w_gB_g$.  The following lemma expresses the gap
$\eta(w_xB_x+w_fB_f+w_gB_g)-\Phi$, for an arbitrary tuple of chain
scalars, as a sum of squares and weighted slacks in the
chain condition~\eqref{E:generic-balance}, together with one term
proportional to the matching residual.  For an admissible chain every
term except the last is nonnegative, so the gap is smallest when all
squares and slacks vanish; the candidate chain of the next subsection is
the tuple for which they do.

\begin{lemma}[Certificate decomposition]\label{L:certificate-decomposition}
Let $\eta>0$ satisfy $r_{\eta,w}\in\mathcal O_N(q)$, and use the associated
recurrence and boundary data from Sections~\ref{S:recurrence-interface}
and~\ref{S:recurrence-matched}.  Let $(D_k)_{k=0}^N$,
$(\gamma_k)_{k=0}^N$, $(\delta_k)_{k=0}^{N-1}$ be real numbers, and let
$\Phi,B_x,B_f,B_g$ be the
quantities~\eqref{D:chain-quantities}--\eqref{E:chain-quantities-explicit}
formed from these numbers.  Then
\begin{equation}\label{E:certificate-decomposition}
\begin{aligned}
    \eta(w_xB_x+w_fB_f+w_gB_g)-\Phi
      &=\frac{1}{\bar q}\Bigg\{
        \sum_{k=0}^{N-1}(b_{k+1}D_k-c_{k+1}\gamma_k)^2
       +(\gamma_N-a_ND_N)^2\\
      &\qquad\qquad+q\sum_{k=0}^{N-1}
        \bigl(\delta_k-a_{k+1}D_k-(1-a_{k+1})\gamma_k\bigr)^2\\
      &\qquad\qquad+2\sum_{k=0}^{N-1}(1-a_{k+1})
        \Bigl[(\gamma_k-q\delta_k)(D_k-\gamma_k)
              -D_{k+1}\gamma_{k+1}\Bigr]\Bigg\}\\
      &\qquad-2H_w(\eta)\gamma_0(D_0-\gamma_0).
\end{aligned}
\end{equation}
\end{lemma}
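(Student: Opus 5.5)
The identity \eqref{E:certificate-decomposition} is a polynomial identity in the free scalars $(D_k,\gamma_k,\delta_k)$. Both sides are quadratic forms in these variables, with coefficients built from the recurrence data $(a_k,b_k,c_k,d_k)$, $r_{\eta,w}$, $\eta$ and $w$. I would prove it by matching coefficients monomial by monomial. To organize this, multiply through by $\bar q$ and expand the right-hand side, then group the terms by the variables they involve.

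First, the $\delta_k$-dependence. On the left, $\delta_k$ appears only through $-\Phi$, contributing $q(D_k-\delta_k)^2$. On the right it appears through $q(\delta_k-a_{k+1}D_k-(1-a_{k+1})\gamma_k)^2$ and through the slack term $-2q(1-a_{k+1})\delta_k(D_k-\gamma_k)$. Completing the square shows that the $\delta_k^2$ coefficients agree. The cross terms also agree: the right side gives $-2q\delta_k[a_{k+1}D_k+(1-a_{k+1})\gamma_k]-2q(1-a_{k+1})\delta_k(D_k-\gamma_k)=-2q\delta_kD_k$. The $\delta$-free leftover is $q(a_{k+1}D_k+(1-a_{k+1})\gamma_k)^2-qD_k^2$, which I carry into the next step. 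After this elimination, what remains is a quadratic form in $(D_k,\gamma_k)$ alone, with only nearest-neighbour couplings: $D_k\gamma_k$, $\gamma_k^2$, $D_k^2$, and $D_{k+1}\gamma_{k+1}$ from the slack.

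Second, I would verify the remaining coefficients index by index.

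For interior indices $1\le k\le N-1$, the coefficients of $D_k^2$, $\gamma_k^2$ and $D_k\gamma_k$ must match. On the left these come only from $\eta w_xB_x$ and from the $q D_\ell^2/\bar q$ parts inside $\Phi$ and $B_f$. Their identification reduces to recurrence identities:
\begin{itemize}
\item the isometry \eqref{E:recurrence-interface-isometry};
\item the invariant $qa_k^2+b_k^2=r_{\eta,w}^2=q+\eta(\bar qw_x-qw_f)$;
\item the product identity \eqref{E:recurrence-boundary-product}, which handles the mixed term, since the slack contributes $D_k\gamma_k$ with coefficient $2(1-a_{k+1})$ from index $k$ and $-2(1-a_k)$ from index $k-1$;
\item $c_{k+1}^2=(1-a_{k+1})(1+d_{k+1})$, which handles $\gamma_k^2$.
\end{itemize}

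At the terminal index $N$, the terms $(\gamma_N-a_ND_N)^2$ with $a_N=r$, together with the slack from $k=N-1$, must reproduce $-\bar qD_N^2+(D_N-\gamma_N)^2+\eta\bar q w_xD_N^2-\eta q w_f D_N^2$. This again uses $r^2=q+\eta(\bar qw_x-qw_f)$.

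At $k=0$, the boundary data enter through $\eta w_f B_f$ and $\eta w_g\gamma_0^2$. Here I would use Lemma~\ref{L:recurrence-interface-matched}: the expressions for $c_1b_1$ and $c_1^2$ contain exactly $\eta w_f$, $\eta(\bar qw_g-w_f)$ and $\bar q H_w(\eta)$ correction terms. The $H_w$ pieces are what combine into $-2H_w(\eta)\gamma_0(D_0-\gamma_0)$: one piece comes from $c_1b_1$ in the cross term and two from $c_1^2$ in the $\gamma_0^2$ term, giving $-2\bar qH_w(\gamma_0D_0-\gamma_0^2)$ after the division by $\bar q$.

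The main obstacle is the $k=0$ bookkeeping. There $B_f$ couples $D_0$ and $\gamma_0$ with the whole tail $q\sum_{\ell\ge1}D_\ell^2$, and the left side redistributes those tail terms into every index's $D_k^2$ coefficient. I would first collect all $D_k^2$ contributions from the left, namely $\eta w_x-\eta w_f q/\bar q+q/\bar q$ for $k\ge1$, which equals $(r^2-q)/\bar q+q/\bar q$. I would then check these against $(b_{k+1}^2+qa_{k+1}^2)/\bar q-q/\bar q$ from the right-hand squares. This is just $r^2/\bar q-q/\bar q$ by the invariant, so the accounting closes, and the $k=0$ case then differs only by the boundary corrections from Lemma~\ref{L:recurrence-interface-matched}.
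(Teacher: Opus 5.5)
Your coefficient-matching plan is correct and is essentially the paper's argument. The paper's Lemma~\ref{L:certificate-local-identity} is exactly your per-index matching: completing the square in $\delta$ to produce $q(D-\delta)^2$, the invariant for the $D^2$ coefficient, the product identity~\eqref{E:recurrence-boundary-product} for $D\gamma$, and the formula for $c_{k+1}^2$ for $\gamma^2$. The cross-index slack is handled by adding and subtracting $2(1-a_k)D_k\gamma_k$, and the boundary by Lemma~\ref{L:recurrence-interface-matched}.

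One bookkeeping slip in your last paragraph needs fixing. You count the $qD_k^2/\bar q$ coming from $-\Phi$ on the left, getting $r^2/\bar q$. You also subtract the same term on the right as part of your $\delta$-elimination leftover, getting $r^2/\bar q-q/\bar q$. You then declare these equal, which is false for $q>0$. Done consistently, both sides give $r^2/\bar q$ before the elimination, or both give $(r^2-q)/\bar q$ after it. The value $r^2/\bar q$ also holds at $k=0$ and at $k=N$.
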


The proof is given in Appendix~\ref{A:certificate-decomposition-proof}.

\begin{corollary}\label{C:certificate-bound}
Let $\eta>0$ satisfy $r_{\eta,w}\in\mathcal O_N(q)$, and let $\mathcal C$
be an admissible chain with certificate and initial-condition data $(\Phi,B_x,B_f,B_g)$.
Then
\[
    \Phi\leq\eta(w_xB_x+w_fB_f+w_gB_g)+2H_w(\eta)\gamma_0(D_0-\gamma_0),
\]
with equality if and only if every nonnegative term on the right-hand side
of~\eqref{E:certificate-decomposition}, apart from the residual term, is zero.
\end{corollary}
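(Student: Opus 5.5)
This follows directly from Lemma~\ref{L:certificate-decomposition}: rearrange~\eqref{E:certificate-decomposition} and check the sign of each bracketed term using admissibility. Moving $\Phi$ and the residual term across gives
\[
    \eta(w_xB_x+w_fB_f+w_gB_g)+2H_w(\eta)\gamma_0(D_0-\gamma_0)-\Phi
    =\frac{1}{\bar q}\{\cdots\},
\]
where the braces contain the three groups of terms in~\eqref{E:certificate-decomposition}. The inequality is therefore the claim that the braced expression is nonnegative. The equality statement then holds because that expression is a sum of nonnegative terms, which vanishes exactly when every one of them does. Since $\bar q=1-q>0$, the prefactor does not affect any sign.

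The first group consists of the squares $(b_{k+1}D_k-c_{k+1}\gamma_k)^2$ and $(\gamma_N-a_ND_N)^2$, which are nonnegative for any choice of scalars. The second group is $q$ times a sum of squares, and it is nonnegative because $q\geq0$. For the third group, each summand is $2(1-a_{k+1})$ times the slack $(\gamma_k-q\delta_k)(D_k-\gamma_k)-D_{k+1}\gamma_{k+1}$.

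Two facts make each summand of the third group nonnegative:
\begin{itemize}
\item The slack is nonnegative for $0\leq k<N$, because this is exactly the balance condition~\eqref{E:generic-balance} of Definition~\ref{D:admissible-chain}.
\item The factor $1-a_{k+1}$ is positive for $0\leq k<N$. Since $r_{\eta,w}\in\mathcal O_N(q)$, the ordering~\eqref{E:sequence-order} gives $0<a_{k+1}<1$ for $1\leq k+1\leq N$. Only indices $a_1,\ldots,a_N$ of the backward sequence appear here, and the boundary pair $a_0$ never enters.
\end{itemize}

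For the equality clause, I read ``every nonnegative term apart from the residual term'' as the individual squares and the individual products $(1-a_{k+1})\times\text{slack}$. Because each factor $1-a_{k+1}$ is strictly positive, a product vanishes exactly when its slack vanishes. No part of this argument is a real obstacle. The only care needed is to use the sign $1-a_{k+1}>0$ at the correct indices and to use $q\geq0$ for the middle group, including the case $q=0$, where that group disappears.
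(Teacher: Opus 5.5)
Your proposal is correct and follows the paper's proof: both rearrange the identity of Lemma~\ref{L:certificate-decomposition}, note that the squares and the $q$-weighted squares are nonnegative, and use two facts for the remaining summands: each chain-condition slack is nonnegative by \eqref{E:generic-balance}, and $1-a_{k+1}>0$ by \eqref{E:sequence-order}. Your explicit handling of the equality clause (a sum of nonnegative terms vanishes exactly when each term does) and of the prefactor $1/\bar q>0$ only spells out what the paper leaves implicit.
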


\begin{proof}
For an admissible chain, each bracketed chain-condition slack in
\eqref{E:certificate-decomposition} is nonnegative by
\eqref{E:generic-balance}, and $0<a_{k+1}<1$ by
\eqref{E:sequence-order}.  Hence every term on the
right-hand side other than the last is nonnegative.
\end{proof}

\subsection{Candidate chain}
\label{S:candidate-chain}

We now define the candidate chain as the tuple that makes every square and
every slack in~\eqref{E:certificate-decomposition} vanish, so that the
certificate is controlled by the matching residual alone.  Vanishing of the
first-line squares fixes the ratios $\gamma_k/D_k$.  We choose the
displacements $\delta_k$ to make the second-line squares vanish.  Once the
ratios and displacements are fixed, only the coordinate scales $D_k$ remain.
Imposing equality in the chain condition then determines these scales recursively,
leaving the single free scale $D_0>0$.

Let $\eta>0$ satisfy $r_{\eta,w}\in\mathcal O_N(q)$ and
$H_w(\eta)\geq0$, and take the boundary quantities $a_0,b_0,c_1$ from
Section~\ref{S:recurrence-matched}.  Fix $D_0>0$.  Recursively, once $D_k$
has been defined, set
\begin{equation}\label{D:mixed-primal-data}
\begin{aligned}
    \gamma_k&:=\frac{b_{k+1}}{c_{k+1}}D_k,\\
    \delta_k&:=a_{k+1}D_k+(1-a_{k+1})\gamma_k,
\end{aligned}
\qquad 0\leq k<N,
\end{equation}
and then impose equality in the chain condition by setting
\begin{equation}\label{D:mixed-primal-scales}
    D_{k+1}
      :=\left[
          \frac{c_{k+2}}{b_{k+2}}
          (\gamma_k-q\delta_k)(D_k-\gamma_k)
        \right]^{1/2},
    \qquad 0\leq k<N-1.
\end{equation}
At the terminal step, set
\begin{equation}\label{D:mixed-primal-terminal}
\begin{aligned}
    D_N&:=\left[
        \frac{(\gamma_{N-1}-q\delta_{N-1})(D_{N-1}-\gamma_{N-1})}
             {a_N}
      \right]^{1/2},\\
    \gamma_N&:=a_ND_N.
\end{aligned}
\end{equation}
Whenever this recursion is well defined, denote its output by
\[
    \mathcal C_{w,\eta,D_0}
    :=\bigl((D_k)_{k=0}^N,(\gamma_k)_{k=0}^N,
            (\delta_k)_{k=0}^{N-1}\bigr),
\]
and let $(\Phi,B_x,B_f,B_g)$ denote its certificate and initial-condition data
from Definition~\ref{D:chain-certificate-data}.  Thus the paired recurrence
fixes the ratios $\gamma_k/D_k$ and the displacements in
\eqref{D:mixed-primal-data}, while \eqref{D:mixed-primal-scales} propagates
only the scale $D_k$.  The lemma below shows that the recursion is well
defined and yields an admissible chain.

\begin{lemma}[Well-definedness and admissibility of the candidate chain]
\label{L:candidate-admissible}
Let $N\geq1$, $q\in[0,1)$, let $w$ satisfy the simplex
conditions~\eqref{D:weight-simplex}, and let $\eta>0$ satisfy
$r_{\eta,w}\in\mathcal O_N(q)$ and $H_w(\eta)\geq0$.  For every $D_0>0$:
\begin{enumerate}[label=(\roman*),leftmargin=2.2em]
    \item the construction~\eqref{D:mixed-primal-data}--\eqref{D:mixed-primal-terminal}
    is well defined, its scalars satisfy~\eqref{E:generic-signs}, and
    \begin{equation}\label{E:mixed-primal-chain-equality}
        D_{k+1}\gamma_{k+1}
          =(\gamma_k-q\delta_k)(D_k-\gamma_k),
        \qquad 0\leq k<N;
    \end{equation}
    \item its output $\mathcal C_{w,\eta,D_0}$ is admissible.
\end{enumerate}
\end{lemma}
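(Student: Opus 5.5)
The plan is to verify everything by direct substitution of the recurrence into the candidate data. The ratios $\gamma_k/D_k$, $\delta_k/D_k$ and $(D_k-\gamma_k)/D_k$ are explicit functions of $(a_{k+1},b_{k+1},c_{k+1})$, so each required inequality reduces to a sign statement about the paired backward sequence. These signs come from Corollary~\ref{C:recurrence-positive}, the ordering~\eqref{E:sequence-order}, and the boundary Lemmas~\ref{L:recurrence-interface-matched} and~\ref{L:recurrence-boundary-identities}.

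\emph{Step 1: two key factorizations.} Substituting~\eqref{D:mixed-primal-data} and using the second row of~\eqref{D:sequence}, or~\eqref{D:recurrence-boundary} when $k=0$, gives
\[
  \gamma_k-q\delta_k=\frac{D_k}{c_{k+1}}\bigl(d_{k+1}b_{k+1}-qc_{k+1}a_{k+1}\bigr)=\frac{D_k\,b_k}{c_{k+1}},
  \qquad
  D_k-\gamma_k=\frac{D_k(c_{k+1}-b_{k+1})}{c_{k+1}}.
\]
We then check three signs.
\begin{enumerate}[label=(\alph*),leftmargin=2.2em]
  \item $c_{k+1}>0$. For $k\geq1$ this holds because $0<a_{k+1}<1$; for $k=0$ it follows from $c_1=b_1+\sqrt{\bar q\eta}>0$.
  \item $b_k>0$. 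For $k\geq1$ this is Corollary~\ref{C:recurrence-positive}. For $k=0$ we use~\eqref{E:recurrence-interface-b0} together with the bound $b_1\geq q\sqrt{\eta/\bar q}$ obtained in the proof of Lemma~\ref{L:existence-margin}, which is where $H_w(\eta)\geq0$ enters. This gives $\bar q b_1\geq q\sqrt{\bar q\eta}>qa_1\sqrt{\bar q\eta}$, the last step by $a_1<1$.
  \item $c_{k+1}>b_{k+1}$. For $k\geq1$, identity~\eqref{E:recurrence-boundary-product} and $c_{k+1}^2=(1-a_{k+1})(1+d_{k+1})$ give $c_{k+1}^2-b_{k+1}c_{k+1}=\bar q(1-a_{k+1})+(1-a_k)>0$. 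For $k=0$, Lemma~\ref{L:recurrence-interface-matched} gives $c_1^2-c_1b_1=\bar q(1-a_1)+\bar q\eta w_g-\bar qH_w(\eta)$. This is positive because the definition of $H_w$ shows $1-a_1\geq H_w(\eta)+\sqrt{\eta/\bar q}\,b_1>H_w(\eta)$.
\end{enumerate}

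\emph{Step 2: well-definedness, signs, and chain equality.} By Step~1 the radicand in~\eqref{D:mixed-primal-scales} and in~\eqref{D:mixed-primal-terminal} equals $D_k^2b_k(c_{k+1}-b_{k+1})/c_{k+1}^2$ times a positive factor ($c_{k+2}/b_{k+2}$ or $1/a_N$). It is therefore positive, and inductively $D_{k+1}>0$. We then read off the conditions~\eqref{E:generic-signs} one at a time.
\begin{enumerate}[label=(\alph*),leftmargin=2.2em]
  \item $\gamma_k>0$ and $\delta_k\geq0$, the latter because $\delta_k$ is a convex combination of the positive numbers $D_k$ and $\gamma_k$.
  \item $\gamma_k>q\delta_k$, since $b_k>0$.
  \item $0<\gamma_N=a_ND_N<D_N$, since $0<a_N<1$.
\end{enumerate}
The chain equality~\eqref{E:mixed-primal-chain-equality} follows directly from the definitions: $D_{k+1}\gamma_{k+1}=D_{k+1}^2\,b_{k+2}/c_{k+2}$, and at the terminal index $D_N\gamma_N=a_ND_N^2$. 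This completes part~(i) and gives~\eqref{E:generic-balance} with equality.

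\emph{Step 3: the $\pi$-conditions, the main obstacle.} Monotonicity is easy. The recurrence gives $\pi_{k+1}\leq\pi_k$ if and only if $q(\pi_k\delta_k-D_k)\leq0$. Since $\delta_k-D_k=(1-a_{k+1})(\gamma_k-D_k)<0$, an induction starting from $\pi_0=1$ gives $\pi_k\leq1$ and hence monotonicity. When $q=0$ all $\pi_k=1$ and~\eqref{E:generic-terminal-admissibility} reads $0\geq0$, so we may assume $q>0$. Extend $\pi_N$ by the same formula. Dividing~\eqref{E:generic-terminal-admissibility} by $D_{N-1}-\gamma_{N-1}>0$ and using the terminal definition of $D_N^2$ turns it into $\pi_N\geq q/a_N$. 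Positivity $\pi_k>0$ is likewise implied by a lower bound of the form $\pi_k\geq\rho_k>0$.

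I would therefore look for an explicit lower bound, or preferably a closed form, $\pi_k=\rho_k(a_k,b_k,\ldots)$ with $\rho_N\geq q/a_N$. The first attempt is to compute $\pi_1=(b_1-qc_1)/b_0$ from the factorizations. From this, guess a closed form in terms of $(a_k,b_k)$ and $r_{\eta,w}^2$, and prove it by induction through the recurrence $\pi_{k+1}b_k=\pi_kb_{k+1}-qc_{k+1}$. This recurrence is simply~\eqref{E:generic-pi-recurrence} rewritten using Step~1.

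The cross identity~\eqref{E:recurrence-boundary-cross}, $a_kb_{k+1}-a_{k+1}b_k=r_{\eta,w}^2c_{k+1}$, has exactly the shape needed to absorb the $qc_{k+1}$ term. This suggests a candidate of the form $\pi_k=\bigl(\alpha b_k+\beta a_k\bigr)/b_k$ with constants $\alpha,\beta$ fixed at $k=0$. Under this ansatz the inequality $\pi_N\geq q/a_N$ should reduce, via the invariant $qa_N^2+b_N^2=r^2$, to a sign condition tied to $H_w(\eta)\geq0$. Confirming this reduction is the step I expect to require the most care.
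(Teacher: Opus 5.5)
\textbf{Verdict: genuine gap.} Part~(i) is fine, but the core of part~(ii) is not proved, and the ansatz you propose for it has the wrong form.

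\textbf{Part (i) and monotonicity.} Your Steps~1 and~2 are correct and follow the paper's proof of part~(i) closely. The paper uses the same ingredients: the factorization $c_{k+1}(\gamma_k-q\delta_k)=b_kD_k$, the bound $0<b_{k+1}<c_{k+1}$, $b_0>0$ via \eqref{E:recurrence-interface-b0} and Lemma~\ref{L:existence-margin}, and the chain equality read off from the definitions. Your monotonicity argument for the $\pi_k$ is also the paper's. Three small points:
\begin{itemize}
\item $c_1>b_1$ is immediate from $c_1=b_1+\sqrt{\bar q\eta}$.
\item Your strict step $q\sqrt{\bar q\eta}>qa_1\sqrt{\bar q\eta}$ fails at $q=0$; there you need $b_0=b_1>0$ directly.
\item At $q=0$ the terminal condition reads $\gamma_{N-1}(D_{N-1}-\gamma_{N-1})\geq0$, not $0\geq0$.
\end{itemize}

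\textbf{The gap in part (ii).} You correctly reduce \eqref{E:generic-terminal-admissibility} to $\pi_N\geq q/a_N$. However, you never prove it, nor positivity of $\pi_{N-1}$.

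As written, your ansatz $\pi_k=\alpha+\beta a_k/b_k$ does not solve $b_k\pi_{k+1}=b_{k+1}\pi_k-qc_{k+1}$. The homogeneous solutions of this recurrence are multiples of $b_k$, not constants. The cross identity \eqref{E:recurrence-boundary-cross} supplies the particular solution $qa_k/r^2$.

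The missing computation runs as follows.
\begin{itemize}
\item Multiply the recurrence by $r^2$ and substitute $r^2c_{k+1}=a_kb_{k+1}-a_{k+1}b_k$. This gives $b_k(r^2\pi_{k+1}-qa_{k+1})=b_{k+1}(r^2\pi_k-qa_k)$.
\item Hence $(r^2\pi_k-qa_k)/b_k$ equals its value $\lambda=(r^2-qa_0)/b_0$ at $k=0$. So $\pi_k=(qa_k+\lambda b_k)/r^2$ for $0\leq k\leq N$, including your extended $\pi_N$.
\item By \eqref{E:recurrence-interface-a0} and \eqref{D:risk-coefficient-induced-radius}, $r^2-qa_0=\bar q\bigl(\eta w_x+qH_w(\eta)\bigr)$. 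This is the second place where $H_w(\eta)\geq0$ is needed, and it gives $\lambda\geq0$.
\item The terminal normalization $a_N=r$, $b_N=\sqrt{\bar q}\,r$ then gives $\pi_N-q/a_N=\lambda b_N/r^2=\sqrt{\bar q}\,\lambda/r\geq0$. The invariant $qa_N^2+b_N^2=r^2$ plays no role here.
\item For $q>0$, positivity of $\pi_k$ follows from $qa_k>0$ and $\lambda b_k\geq0$ when $k\geq1$. Alternatively, it follows from $\pi_N\geq q/a_N>0$ together with monotonicity.
\end{itemize}
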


\begin{proof}
\emph{Proof of (i).}
For $1\leq k<N$, the first row of~\eqref{D:sequence} and the definition of
$c_{k+1}$ give
\[
\begin{aligned}
    c_{k+1}(c_{k+1}-b_{k+1})
      &=c_{k+1}^2-a_k+d_{k+1}a_{k+1}\\
      &=(1-a_k)+\bar q(1-a_{k+1})>0.
\end{aligned}
\]
Hence $0<b_{k+1}<c_{k+1}$.  At the boundary,
$c_1=b_1+\sqrt{\bar q\eta}>b_1>0$, while $0<a_N<1$.  Therefore
\[
    0<\frac{b_{k+1}}{c_{k+1}}<1,
    \qquad 0\leq k<N,
\]
and $0<a_N<1$.

We also need the boundary sign $b_0>0$.  Equation~\eqref{E:recurrence-interface-b0} and Lemma~\ref{L:existence-margin} give
\[
    b_1^2\geq\eta\left(w_x+\frac{q^2}{\bar q}\right)
      \geq\frac{q^2\eta}{\bar q},
\]
so $b_1\geq q\sqrt{\eta/\bar q}$.  Hence
\begin{equation}\label{E:recurrence-boundary-b0-positive}
    b_0>0.
\end{equation}
Indeed, for $q>0$,
$b_0\geq q(1-a_1)\sqrt{\bar q\eta}>0$, while for $q=0$ we have
$b_0=b_1>0$.

The definitions in~\eqref{D:mixed-primal-data} also give, for $0\leq k<N$,
\[
\begin{aligned}
    c_{k+1}(\gamma_k-q\delta_k)
      &=\bigl(d_{k+1}b_{k+1}-qc_{k+1}a_{k+1}\bigr)D_k
       =b_kD_k,
\end{aligned}
\]
where the last equality is the second row of~\eqref{D:recurrence-boundary}
for $k=0$ and of~\eqref{D:sequence} for $1\leq k<N$.

We now proceed recursively.  Suppose $D_k>0$ for some $0\leq k<N-1$.  By
\eqref{D:mixed-primal-data} and the ratio bound above,
$0<\gamma_k<D_k$ and, since $0<a_{k+1}<1$,
\[
    \gamma_k<\delta_k<D_k.
\]
The preceding identity, together with
\eqref{E:recurrence-boundary-b0-positive} and $b_k>0$ for
$1\leq k\leq N$ from Corollary~\ref{C:recurrence-positive}, gives
$\gamma_k-q\delta_k>0$.  Since $D_k-\gamma_k>0$ and
$b_{k+2},c_{k+2}>0$, the radicand in~\eqref{D:mixed-primal-scales} is
positive, so $D_{k+1}>0$.  Starting from $D_0>0$, induction defines the
construction through $D_{N-1}$.

The same argument at $k=N-1$ gives
$0<\gamma_{N-1}<\delta_{N-1}<D_{N-1}$ and
$\gamma_{N-1}-q\delta_{N-1}>0$.  Since $a_N>0$, the first line of
\eqref{D:mixed-primal-terminal} therefore defines $D_N>0$, and its second
line together with $0<a_N<1$ gives $0<\gamma_N<D_N$.  Thus the entire
recursion is well defined and all sign conditions~\eqref{E:generic-signs}
hold.

For $0\leq k<N-1$, \eqref{D:mixed-primal-data} and
\eqref{D:mixed-primal-scales} give
\[
\begin{aligned}
    D_{k+1}\gamma_{k+1}
      &=\frac{b_{k+2}}{c_{k+2}}D_{k+1}^2\\
      &=(\gamma_k-q\delta_k)(D_k-\gamma_k).
\end{aligned}
\]
At $k=N-1$, the same equality follows directly from the two lines of
\eqref{D:mixed-primal-terminal}.  This proves
\eqref{E:mixed-primal-chain-equality} and hence the chain
condition~\eqref{E:generic-balance}.  This completes the proof of~(i).

\emph{Proof of (ii).}
It remains to verify the conditions involving the coefficients $\pi_k$ and
the terminal admissibility condition in Definition~\ref{D:admissible-chain}.
Using $c_{k+1}\gamma_k=b_{k+1}D_k$ from
\eqref{D:mixed-primal-data} and the identity
$c_{k+1}(\gamma_k-q\delta_k)=b_kD_k$ established in part~(i), the
recurrence~\eqref{E:generic-pi-recurrence} is equivalent to
\begin{equation}\label{E:mixed-weight-coupled-recurrence}
    b_k\pi_{k+1}=b_{k+1}\pi_k-qc_{k+1},
    \qquad 0\leq k<N-1.
\end{equation}
Put $r:=r_{\eta,w}$.  By~\eqref{E:recurrence-interface-a0} and the radius definition
\eqref{D:risk-coefficient-induced-radius},
\[
    r^2-qa_0
      =\bar q\bigl(\eta w_x+qH_w(\eta)\bigr).
\]
Thus, to solve the coupled recurrence for $\pi_k$, introduce the
nonnegative boundary ratio
\[
    \lambda
      :=\frac{r^2-qa_0}{b_0}
       =\frac{\bar q\bigl(\eta w_x+qH_w(\eta)\bigr)}{b_0}\geq0.
\]
Multiplying~\eqref{E:mixed-weight-coupled-recurrence} by $r^2$ and using
\eqref{E:recurrence-boundary-cross},
\[
\begin{aligned}
 b_k\bigl(r^2\pi_{k+1}-qa_{k+1}\bigr)
   &=r^2b_{k+1}\pi_k-qr^2c_{k+1}-qa_{k+1}b_k\\
   &=b_{k+1}\bigl(r^2\pi_k-qa_k\bigr),
\end{aligned}
\]
for $0\leq k<N-1$.  Thus
$(r^2\pi_k-qa_k)/b_k$ is independent of $k$.  Since $\pi_0=1$ and
$r^2-qa_0=\lambda b_0$, this constant is $\lambda$, and hence
\begin{equation}\label{E:mixed-weight-explicit}
    \pi_k=\frac{qa_k+\lambda b_k}{r^2},
    \qquad 0\leq k<N.
\end{equation}

This formula gives positivity directly.  If $q>0$, then $a_k>0$ by
\eqref{E:sequence-order}; if $q=0$, $r_{\eta,w}\in\mathcal O_N(0)$ gives
$r^2=\eta w_x>0$, so $w_x>0$ and hence
$\lambda=\eta w_x/b_0>0$.  Since $b_0>0$ by
\eqref{E:recurrence-boundary-b0-positive} and $b_k>0$ for $1\leq k\leq N$
by Corollary~\ref{C:recurrence-positive}, in either case
\[
    \pi_k>0,
    \qquad 0\leq k<N.
\]

It remains to verify monotonicity.  Rearranging
\eqref{E:generic-pi-recurrence},
\[
    \pi_{k+1}-\pi_k
      =\frac{q(\pi_k\delta_k-D_k)}
             {\gamma_k-q\delta_k},
    \qquad 0\leq k<N-1.
\]
The denominator is positive.  Starting from $\pi_0=1$, if $\pi_k\leq1$,
then
\[
    \pi_k\delta_k\leq\delta_k<D_k,
\]
so $\pi_{k+1}\leq\pi_k$.  Induction therefore gives
\[
    1=\pi_0\geq\pi_1\geq\cdots\geq\pi_{N-1}>0.
\]
This proves~\eqref{E:generic-pi-monotone}.

Finally, we verify the terminal condition.  Using again
$c_N\gamma_{N-1}=b_ND_{N-1}$ and
$c_N(\gamma_{N-1}-q\delta_{N-1})=b_{N-1}D_{N-1}$ gives
\[
    \frac{\pi_{N-1}\gamma_{N-1}-qD_{N-1}}
         {\gamma_{N-1}-q\delta_{N-1}}
      =\frac{b_N\pi_{N-1}-qc_N}{b_{N-1}}.
\]
Substituting~\eqref{E:mixed-weight-explicit} and then using
\eqref{E:recurrence-boundary-cross} at $k=N-1$,
\[
\begin{aligned}
    \frac{\pi_{N-1}\gamma_{N-1}-qD_{N-1}}
         {\gamma_{N-1}-q\delta_{N-1}}
      &=\frac{
        q a_{N-1}b_N+\lambda b_{N-1}b_N-q r^2c_N
      }{r^2b_{N-1}}\\
      &=\frac{
        q\bigl(a_{N-1}b_N-r^2c_N\bigr)
        +\lambda b_{N-1}b_N
      }{r^2b_{N-1}}\\
      &=\frac{qa_N+\lambda b_N}{r^2}.
\end{aligned}
\]
By the chain equality~\eqref{E:mixed-primal-chain-equality} and
$\gamma_N=a_ND_N$,
\[
    (\gamma_{N-1}-q\delta_{N-1})(D_{N-1}-\gamma_{N-1})
      =a_ND_N^2.
\]
Consequently,
\[
\begin{aligned}
&\bigl(\pi_{N-1}\gamma_{N-1}-qD_{N-1}\bigr)
   (D_{N-1}-\gamma_{N-1})-qD_N^2\\
&\quad=
 \left[
   \frac{a_N(qa_N+\lambda b_N)}{r^2}-q
 \right]D_N^2\\
&\quad=\frac{\lambda a_Nb_N}{r^2}D_N^2\\
&\quad=\sqrt{\bar q}\,\lambda D_N^2\geq0,
\end{aligned}
\]
where the last two equalities use
$a_N=r$ and $b_N=\sqrt{\bar q}\,r$.
This is~\eqref{E:generic-terminal-admissibility}, completing the proof of~(ii).
\end{proof}

\subsection{Lower bound from the candidate chain}
\label{S:chain-realization}

By Lemma~\ref{L:candidate-admissible}, the candidate chain is
admissible, so Proposition~\ref{P:generic-hard-function} applies to it and
bounds the risk below by $\frac{L}{2}\Phi$ for the budget
$R^2=w_xB_x+w_fB_f+w_gB_g$.  By Lemma~\ref{L:certificate-decomposition},
$\Phi$ differs from $\eta R^2$ only by the residual term, so it remains to
fix the scale: for fixed $\eta$, the candidate chain has a single free scale
$D_0$; rescaling $D_0$ rescales $\Phi,B_x,B_f,B_g$ by the square of the
same factor, so the initial budget can be normalized to any prescribed $R^2$.

\begin{theorem}[Lower bound from the candidate chain]
\label{T:mixed-realization}
Fix $L>0$, $0\leq\mu<L$, $N\geq1$, put $q=\mu/L$, and let $w$
satisfy the simplex conditions~\eqref{D:weight-simplex}.  Let $\eta>0$
satisfy $r_{\eta,w}\in\mathcal O_N(q)$ and $H_w(\eta)\geq0$.  For every
$D_0>0$,
the certificate and
initial-condition data of the candidate chain $\mathcal C_{w,\eta,D_0}$
satisfy
\begin{equation}\label{E:mixed-realized-lower}
    \Phi=\eta\bigl(w_xB_x+w_fB_f+w_gB_g\bigr)+2H_w(\eta)\gamma_0(D_0-\gamma_0).
\end{equation}
Consequently, for every $R>0$ and every $d\geq2N+1$,
\begin{equation}\label{E:ordered-radius-lower-bound}
    \risk{N}{\mathcal P^{\mu,L}_{w,R}(\real^d)}
      \geq\frac{L}{2}\,\eta R^2.
\end{equation}
\end{theorem}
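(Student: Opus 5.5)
The identity \eqref{E:mixed-realized-lower} should follow directly from Lemma~\ref{L:certificate-decomposition}, applied to the scalars of $\mathcal C_{w,\eta,D_0}$. The construction was designed to kill every term in the braces of \eqref{E:certificate-decomposition}, and we check them one by one.

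\emph{First-line squares, $0\leq k<N$.} These vanish because $\gamma_k=(b_{k+1}/c_{k+1})D_k$ by \eqref{D:mixed-primal-data}, so $c_{k+1}\gamma_k=b_{k+1}D_k$.

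\emph{Terminal square.} This vanishes since $\gamma_N=a_ND_N$ by \eqref{D:mixed-primal-terminal}.

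\emph{Second-line squares.} These vanish by the choice $\delta_k=a_{k+1}D_k+(1-a_{k+1})\gamma_k$.

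\emph{Slack terms.} These vanish by the chain equality \eqref{E:mixed-primal-chain-equality} of Lemma~\ref{L:candidate-admissible}(i).

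Only the residual term survives, and moving it to the other side gives \eqref{E:mixed-realized-lower}. Equivalently, this is the equality case of Corollary~\ref{C:certificate-bound}. Well-definedness of all these scalars is supplied by Lemma~\ref{L:candidate-admissible}(i), which uses the hypotheses $r_{\eta,w}\in\mathcal O_N(q)$ and $H_w(\eta)\geq0$.

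For the risk bound \eqref{E:ordered-radius-lower-bound}, we first note the sign of the surviving term. Lemma~\ref{L:candidate-admissible}(i) gives $0<\gamma_0<D_0$, and $H_w(\eta)\geq0$, so $2H_w(\eta)\gamma_0(D_0-\gamma_0)\geq0$. Hence
\[
\Phi\geq\eta\,(w_xB_x+w_fB_f+w_gB_g)=:\eta R_0^2 .
\]
Here $R_0>0$, because $B_x,B_f,B_g>0$ by Lemma~\ref{L:certificate-data}(ii) and the weights are nonnegative and not all zero. By Lemma~\ref{L:candidate-admissible}(ii) the chain is admissible, so Proposition~\ref{P:generic-hard-function} gives $\risk{N}{\mathcal P^{\mu,L}_{w,R_0}(\real^d)}\geq\frac L2\Phi\geq\frac L2\eta R_0^2$ for $d\geq 2N+1$.

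It remains to reach an arbitrary budget $R$. The recursion \eqref{D:mixed-primal-data}--\eqref{D:mixed-primal-terminal} is positively homogeneous of degree one in $D_0$: each $\gamma_k$ and $\delta_k$ is linear in $D_k$, and each $D_{k+1}$ is the square root of a quadratic in the previous scales. Therefore replacing $D_0$ by $tD_0$ multiplies every scalar by $t$, and multiplies $\Phi,B_x,B_f,B_g$ by $t^2$. Choosing $t=R/R_0$ yields an admissible chain with budget exactly $R$, and the same chain of inequalities gives $\risk{N}{\mathcal P^{\mu,L}_{w,R}(\real^d)}\geq\frac L2\eta R^2$.

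The only step that needs any care is confirming that the decomposition lemma applies verbatim, with no unmatched boundary terms. That lemma is stated for arbitrary reals and uses exactly the recurrence and boundary data of Section~\ref{S:recurrence-matched} that define the candidate chain, so I expect it to go through with nothing beyond this term-by-term check.
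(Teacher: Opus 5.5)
Your proposal is correct and follows the paper's proof. Both kill the three square families and the chain-condition slacks in Lemma~\ref{L:certificate-decomposition} by construction of $\mathcal C_{w,\eta,D_0}$, then conclude via $H_w(\eta)\geq0$, $0<\gamma_0<D_0$, positivity of $B_x,B_f,B_g$, degree-two homogeneity in $D_0$, and Proposition~\ref{P:generic-hard-function}. The only cosmetic difference is that you first bound at the natural budget $R_0$ and then rescale, whereas the paper rescales $D_0$ before invoking the bound.
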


\begin{proof}
\emph{Certificate identity.}
The definition of $\gamma_k$ in~\eqref{D:mixed-primal-data} gives
$c_{k+1}\gamma_k=b_{k+1}D_k$, while
\eqref{D:mixed-primal-data}--\eqref{D:mixed-primal-terminal} give
$\delta_k=a_{k+1}D_k+(1-a_{k+1})\gamma_k$ for $0\leq k<N$ and
$\gamma_N=a_ND_N$.  Lemma~\ref{L:candidate-admissible} shows that
the chain condition holds with equality.  These three identities annihilate
the three square terms in
\eqref{E:certificate-decomposition}, while the chain equality annihilates
every chain-condition slack.  Hence equality holds in
Corollary~\ref{C:certificate-bound}, giving~\eqref{E:mixed-realized-lower}.

\emph{Scaling and the lower bound.}
The scalar data $a_k,b_k,c_k$ depend only on $\eta$ and not on $D_0$.
The definitions \eqref{D:mixed-primal-data}--\eqref{D:mixed-primal-terminal}
therefore show
that multiplying $D_0$ by a factor $t>0$ multiplies every
$D_k,\gamma_k,\delta_k$ by the same factor $t$.  Hence
$\Phi,B_x,B_f,B_g$ are all homogeneous of degree two in $D_0$.

Since the candidate chain is admissible by Lemma~\ref{L:candidate-admissible},
Lemma~\ref{L:certificate-data}(ii) gives $B_x,B_f,B_g>0$.  Since $w$
belongs to the simplex, the weighted quantity
$w_xB_x+w_fB_f+w_gB_g$ is therefore strictly positive.  By the quadratic
homogeneity just noted, we may rescale $D_0$ so that
\[
    w_xB_x+w_fB_f+w_gB_g=R^2.
\]
For this scaling,~\eqref{E:mixed-realized-lower} gives $\Phi\geq\eta R^2$,
since $H_w(\eta)\geq0$ and admissibility gives $0<\gamma_0<D_0$.
Applying the generic hard-function lower bound~\eqref{E:generic-risk-mixed}
yields
\[
    \risk{N}{\mathcal P^{\mu,L}_{w,R}(\real^d)}
      \geq\frac{L}{2}\Phi
      \geq\frac{L}{2}\,\eta R^2,
    \qquad d\geq2N+1.
\]
This proves~\eqref{E:ordered-radius-lower-bound}.
\end{proof}

By Corollary~\ref{C:certificate-bound}, no admissible chain attains a
larger certificate value than $\eta(w_xB_x+w_fB_f+w_gB_g)$ at a risk
coefficient, so among the lower bounds obtainable from
Proposition~\ref{P:generic-hard-function} the candidate chain gives the
largest.

\section{ITEM-w and the matching upper bound}
\label{S:matching-method}

This section constructs ITEM-w and proves the upper bound that matches the
hard-instance lower bound of Section~\ref{S:lower-realization} at a risk
coefficient.  Its analysis is a standard
potential-function (or Lyapunov-function)
argument~\cite{BansalGupta2019}, with the potential decrease certified by
smooth strongly convex interpolation inequalities as in~\cite{TaylorBach2019}.

Given $\eta>0$ with $r_{\eta,w}\in\mathcal O_N(q)$, take the paired
backward sequence
$((a_k,b_k))_{k=1}^N$ associated with $r_{\eta,w}$ and the boundary
quantities $a_0,b_0,c_1,d_1$ from Section~\ref{S:recurrence-matched};
together with the recurrence coefficients $c_k,d_k$, $2\leq k\leq N$,
from~\eqref{D:backward-coupling}, these are the scalar data used below.
Algorithm~\ref{A:itemw} specifies ITEM-w for these coefficients.

\begin{algorithm}
\caption{ITEM-w associated with $\eta$}\label{A:itemw}
\begin{algorithmic}[1]
\Require Initial point $x_0$, horizon $N\geq1$, class parameters $L,q$, and
coefficient data $(a_k)_{k=1}^N$, $(b_k)_{k=1}^N$,
$(c_k)_{k=1}^N$ associated with $\eta$.
\State $z_0\gets x_0$, $v_{-1}\gets x_0$
\For{$k=0,\ldots,N-1$}
    \State $\displaystyle
    x_k\gets z_k+
      \frac{a_{k+1}}{\bar q(1-a_{k+1})}(z_k-v_{k-1})$
    \State $x_k^+\gets x_k-L^{-1}\nabla f(x_k)$
    \State $\displaystyle
    h_k\gets x_k-x_k^+-q(x_k-z_k)$
    \State $\displaystyle
    v_k\gets v_{k-1}-\frac{c_{k+1}}{b_{k+1}}h_k$
    \State $\displaystyle
    z_{k+1}\gets(1-a_{k+1})x_k^++a_{k+1}v_k$
\EndFor
\State \Return $z_N$
\end{algorithmic}
\end{algorithm}

Because $r_{\eta,w}\in\mathcal O_N(q)$, \eqref{E:sequence-order} and
Corollary~\ref{C:recurrence-positive} show that every denominator in the
algorithm is positive.  Since $z_0=v_{-1}=x_0$, the first query is the
prescribed $x_0$.  Each iteration evaluates one gradient, so ITEM-w makes
exactly $N$ oracle calls.  Its coefficients depend only on the class
parameters, $N$, $w$, and $\eta$, so it belongs to $\mathcal A_{N,d}$ in every
dimension.

\subsection{Potential evolution}
\label{S:itemw-certificate}

This subsection establishes the one-step potential decrease underlying the ITEM-w
guarantee.  We define interpolation slacks and a potential whose decrease at
each iteration is a nonnegative combination of two such slacks.  The endpoint
comparisons needed to turn these decreases into the final function-value bound
are given in the next subsection.

\begin{definition}[Interpolation slack]\label{D:itemw-delta}
Let $f\in\Fclass(\real^d)$ with minimizer $x_*$.  For $x\in\real^d$, write
\[
    x^+:=x-L^{-1}\nabla f(x).
\]
For $x,z\in\real^d$, the \emph{interpolation slack} is
\begin{equation}\label{E:itemw-delta}
    \Delta(x,z):=f(x)-f(z)-\ip{\nabla f(z)}{x-z}
       -\frac1{2L}\norm{\nabla f(x)-\nabla f(z)}^2
       -\frac{\mu}{2\bar q}\norm{x^+-z^+}^2.
\end{equation}
For the queried points of Algorithm~\ref{A:itemw} and the minimizer, write
\[
    \Delta_{i,j}:=\Delta(x_i,x_j),
    \qquad i,j\in\{0,\ldots,N-1,*\}.
\]
\end{definition}

Thus $\Delta(x,z)$ is the slack in the smooth $\mu$-strongly convex
interpolation inequality of \cite[Theorem~4]{TaylorHendrickxGlineur2017},
and in particular
\begin{equation}\label{E:itemw-delta-positive}
    \Delta(x,z)\geq0,
    \qquad x,z\in\real^d.
\end{equation}

The inner products in the potential argument are converted into slacks by
the following identity.

\begin{lemma}[Three-point identity]\label{L:three-point}
Let $f\in\Fclass(\real^d)$ with minimizer $x_*$.  For every
$x,z\in\real^d$,
\begin{equation}\label{E:itemw-delta-identity}
    \frac1{\bar q}
    \ip{x^+-x_*}{\nabla f(z)-\mu(z-x_*)}
      =\Delta(x,x_*)-\Delta(x,z)+\Delta(x_*,z).
\end{equation}
\end{lemma}

\begin{proof}
Expanding the three slacks by~\eqref{E:itemw-delta}, the function values
cancel, and the remaining terms are
\[
    \ip{\nabla f(z)}{x-x_*}-\frac1L\ip{\nabla f(x)}{\nabla f(z)}
    -\frac{\mu}{\bar q}
     \Bigl\langle x-x_*-\tfrac1L\nabla f(x),\,
                    z-x_*-\tfrac1L\nabla f(z)\Bigr\rangle,
\]
where the three $\mu$-terms were combined by
$\norm A^2-\norm{A-B}^2+\norm B^2=2\ip AB$.  Factoring out
$x^+-x_*=x-x_*-\frac1L\nabla f(x)$ and using
$1+q/\bar q=1/\bar q$ gives the left-hand side.
\end{proof}

For $1\leq k\leq N$, define the scaled states
\begin{equation}\label{D:itemw-scaled-states}
\begin{aligned}
    X_k&:=z_k-x_*-a_k(v_{k-1}-x_*), \\
    Y_k&:=b_k(v_{k-1}-x_*).
    \end{aligned}
\end{equation}
The update of $z_k$ gives the equivalent representation
\begin{equation}\label{E:itemw-output-state}
    X_k=(1-a_k)(x_{k-1}^+-x_*),
    \qquad 1\leq k\leq N.
\end{equation}
We associate with these states the potential
\begin{equation}\label{D:itemw-potential}
    V_k:=(1-a_k)\Delta_{k-1,*}
      +\frac{L}{2\bar q}\bigl(q\norm{X_k}^2+\norm{Y_k}^2\bigr),
    \qquad 1\leq k\leq N.
\end{equation}
The following lemma shows that the potential decreases along the interior
iterations of ITEM-w.

\begin{lemma}[Potential decrease]\label{L:itemw-decrease}
Fix $L>0$, $0\leq\mu<L$, $N\geq1$, put $q=\mu/L$, and let $w$
satisfy the simplex conditions~\eqref{D:weight-simplex}.  Let $\eta>0$
satisfy $r_{\eta,w}\in\mathcal O_N(q)$, let $f\in\Fclass(\real^d)$ have
minimizer $x_*$, and let
the iterates be generated by Algorithm~\ref{A:itemw}.  For every
$1\leq k<N$,
\begin{equation}\label{E:itemw-decrease}
    V_k-V_{k+1}
      =(1-a_k)\Delta_{k-1,k}+(a_k-a_{k+1})\Delta_{*,k}\geq0.
\end{equation}
\end{lemma}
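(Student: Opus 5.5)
We will prove Lemma~\ref{L:itemw-decrease} by expressing both sides in terms of a small set of vectors and comparing them term by term. Nonnegativity is then immediate from~\eqref{E:itemw-delta-positive}, because $1-a_k>0$ and $a_k-a_{k+1}>0$ by~\eqref{E:sequence-order}. The real content is the identity.

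\emph{Step 1: express the states at index $k+1$ through those at index $k$ and the new gradient.} Fix $1\leq k<N$, shift coordinates so that $x_*=0$, and write $g_k:=\nabla f(x_k)$. Then $x_k^+=x_k-g_k/L$ and
\[
  h_k=\frac{g_k}{L}-q(x_k-z_k).
\]
From $z_k=X_k+a_kv_{k-1}$ and the $x_k$-update, $x_k$ is an explicit combination of $X_k$ and $v_{k-1}=Y_k/b_k$. Using~\eqref{E:itemw-output-state} at $k+1$ and the update $v_k=v_{k-1}-(c_{k+1}/b_{k+1})h_k$, we get
\[
  X_{k+1}=(1-a_{k+1})x_k^+,
  \qquad
  Y_{k+1}=b_{k+1}v_k=b_{k+1}v_{k-1}-c_{k+1}h_k .
\]
It is convenient to work with $u:=x_k^+$ and with $h_k$, since $h_k$ equals $\bar q^{-1}\cdot\frac1L$ times the combination $\nabla f(x_k)-\mu(x_k-x_*)$ up to $z_k$-terms; this is exactly the vector appearing in Lemma~\ref{L:three-point}. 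I expect the extrapolation coefficient $a_{k+1}/(\bar q(1-a_{k+1}))$ in the $x_k$-update to be chosen precisely so that $x_k-z_k$ is proportional to $z_k-v_{k-1}$, which makes $q(x_k-z_k)$ absorb cleanly.

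\emph{Step 2: convert the inner products into slacks.} Expanding $\frac{L}{2\bar q}(q\|X_k\|^2+\|Y_k\|^2-q\|X_{k+1}\|^2-\|Y_{k+1}\|^2)$ produces quadratic terms in $v_{k-1}$, $x_{k-1}^+$, $x_k^+$ and $h_k$. Lemma~\ref{L:three-point}, applied with $(x,z)=(x_k,x_k)$, $(x_{k-1},x_k)$ and $(x_*,x_k)$ where appropriate, turns the cross terms $\langle x_{k-1}^+,\,g_k-\mu x_k\rangle$ and $\langle x_k^+,\,g_k-\mu x_k\rangle$ into combinations of $\Delta_{k-1,*}$, $\Delta_{k-1,k}$, $\Delta_{*,k}$ and $\Delta_{k,*}$. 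The terms $(1-a_k)\Delta_{k-1,*}$ and $-(1-a_{k+1})\Delta_{k,*}$ in $V_k-V_{k+1}$ should then cancel against these. The residual quadratic terms in $h_k$ and $v_{k-1}$ must vanish identically, which is where we need the recurrence relations: the one-step isometry $d_{k+1}^2+qc_{k+1}^2=1$ from~\eqref{E:recurrence-interface-isometry}, the recurrence~\eqref{D:sequence} linking $(a_k,b_k)$ to $(a_{k+1},b_{k+1})$, and the product identity~\eqref{E:recurrence-boundary-product}, $(1-a_{k+1})(1+qa_{k+1})-(1-a_k)=b_{k+1}c_{k+1}$.

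\emph{Step 3: the main obstacle.} The hard part is the bookkeeping in Step~2: checking that, after substitution, the coefficient of every independent quadratic monomial (in $v_{k-1}$, $x_{k-1}^+$, $g_k$ and $x_k$) matches on the two sides. To organize this, I would write everything in the basis $\{x_{k-1}^+,\,v_{k-1},\,h_k\}$ and treat the difference $V_k-V_{k+1}-(1-a_k)\Delta_{k-1,k}-(a_k-a_{k+1})\Delta_{*,k}$ as a quadratic form in these three vectors. I would then verify that its $3\times3$ Gram coefficient matrix is zero, one entry at a time, using the scalar identities above. The function values cancel automatically because each $\Delta$ enters linearly and the coefficients of $f(x_{k-1})$, $f(x_k)$ and $f_*$ balance: $1-a_k$ from $\Delta_{k-1,*}$ and $\Delta_{k-1,k}$, and $(1-a_k)+(a_k-a_{k+1})=1-a_{k+1}$ for $f(x_k)$. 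Checking this balance of function values first is a useful sanity check on the coefficients before doing the vector-term verification.
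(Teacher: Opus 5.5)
Your plan is sound in principle but takes a different and heavier route than the paper, and as written it stops where the actual work begins. With $x_*=0$, $x_k$ is a linear combination of $x_{k-1}^+$ and $v_{k-1}$, and the function values cancel as you note. The claim is therefore a formal quadratic identity in three free vectors, and a coefficient-by-coefficient check of the $3\times3$ Gram matrix must succeed, so no step of your outline would fail. But that check \emph{is} the lemma, and you have not verified a single entry.

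The paper avoids this check by exploiting structure. It applies the three-point identity only at $(x_{k-1},x_k)$ and $(x_k,x_k)$; your third application at $(x_*,x_k)$ is vacuous, since both sides are zero. Put $G_k:=L^{-1}(\nabla f(x_k)-\mu(x_k-x_*))$, and note $h_k=G_k+q(z_k-x_*)$, without the factor $\bar q^{-1}$ you wrote. The paper then rewrites the right-hand side as $(1-a_k)\Delta_{k-1,*}-(1-a_{k+1})\Delta_{k,*}+\frac{L}{\bar q}\langle X_{k+1}-X_k,G_k\rangle$.

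The role of the extrapolation coefficient is not the tautology you mention. It makes $\bar q(1-a_{k+1})(x_k-x_*)=d_{k+1}X_k+c_{k+1}Y_k$ via the inverse recurrence, so the state update is exactly the inverse-recurrence matrix plus a gradient term: $X_{k+1}=d_{k+1}X_k+c_{k+1}Y_k-(1-a_{k+1})G_k$ and $Y_{k+1}=-qc_{k+1}X_k+d_{k+1}Y_k-c_{k+1}G_k$. Set $\tau_k=c_{k+1}/(1+d_{k+1})=(1-a_{k+1})/c_{k+1}$; this equality is just the definition of $c_{k+1}$. The step then takes the midpoint form $X_{k+1}-X_k=\tau_k(Y_{k+1}+Y_k)$ and $Y_{k+1}-Y_k=-\tau_k(q(X_{k+1}+X_k)+2G_k)$. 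From this, $q\|X_k\|^2+\|Y_k\|^2-q\|X_{k+1}\|^2-\|Y_{k+1}\|^2=2\langle X_{k+1}-X_k,G_k\rangle$ in one line. So $(X_k,Y_k,G_k)$ is the natural basis, whereas in your basis $(x_{k-1}^+,v_{k-1},h_k)$ the update matrix is hidden.

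Your brute-force route buys mechanical checkability, for instance by computer algebra, without having to guess the structure. The paper's route is short and explains why $V_k$ is the right potential: each step is a Cayley-type map preserving $qX^2+Y^2$. To turn your outline into a proof, you would still have to carry out all six coefficient checks.
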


\begin{proof}
Fix $1\leq k<N$ and put
\[
    G_k:=x_k-x_k^+-q(x_k-x_*)
       =L^{-1}\bigl(\nabla f(x_k)-\mu(x_k-x_*)\bigr).
\]
Applying~\eqref{E:itemw-delta-identity} with
$(x,z)=(x_{k-1},x_k)$ and using~\eqref{E:itemw-output-state},
\begin{align*}
&(1-a_k)\Delta_{k-1,k}+(a_k-a_{k+1})\Delta_{*,k}\\
&\quad={}
 (1-a_k)\Delta_{k-1,*}+(1-a_{k+1})\Delta_{*,k}
 -\frac{L}{\bar q}\ip{X_k}{G_k}.
\end{align*}
The instance $x=z=x_k$ of~\eqref{E:itemw-delta-identity}, again using
\eqref{E:itemw-output-state}, gives
$L\ip{X_{k+1}}{G_k}/\bar q
=(1-a_{k+1})(\Delta_{k,*}+\Delta_{*,k})$.  Hence
\begin{align}
&(1-a_k)\Delta_{k-1,k}+(a_k-a_{k+1})\Delta_{*,k}\notag\\
&\quad={}
 (1-a_k)\Delta_{k-1,*}-(1-a_{k+1})\Delta_{k,*}
 +\frac{L}{\bar q}\ip{X_{k+1}-X_k}{G_k}.
\label{E:itemw-decrease-interpolation}
\end{align}

It remains to identify the last inner product with the decrease of the
quadratic part of the potential.  The definition of $x_k$ in
Algorithm~\ref{A:itemw} is equivalently
\[
    \bar q(1-a_{k+1})(x_k-x_*)
      =d_{k+1}(z_k-x_*)-a_{k+1}(v_{k-1}-x_*).
\]
Substituting~\eqref{D:itemw-scaled-states} and the first row
of~\eqref{E:recurrence-interface-inverse} gives
\[
    \bar q(1-a_{k+1})(x_k-x_*)=d_{k+1}X_k+c_{k+1}Y_k.
\]
Since $x_k^+-x_*=\bar q(x_k-x_*)-G_k$, multiplying by $1-a_{k+1}$ and
using~\eqref{E:itemw-output-state} at index $k+1$ yields
\[
    X_{k+1}
      =d_{k+1}X_k+c_{k+1}Y_k-(1-a_{k+1})G_k.
\]
Likewise, the definition of $h_k$ gives
$h_k=G_k+q(z_k-x_*)$.  Using the update
$v_k=v_{k-1}-c_{k+1}h_k/b_{k+1}$ from Algorithm~\ref{A:itemw} and the
second row of~\eqref{E:recurrence-interface-inverse} gives
\[
    Y_{k+1}
      =-qc_{k+1}X_k+d_{k+1}Y_k-c_{k+1}G_k.
\]
The ordered-sequence conditions and~\eqref{D:backward-coupling} give the
positive scalar
\[
    \tau_k:=\frac{c_{k+1}}{1+d_{k+1}}
      =\frac{1-a_{k+1}}{c_{k+1}}.
\]
Rearranging the last two identities,
\[
\begin{aligned}
    X_{k+1}-X_k&=\tau_k(Y_{k+1}+Y_k),\\
    Y_{k+1}-Y_k&=-\tau_k\bigl(q(X_{k+1}+X_k)+2G_k\bigr).
\end{aligned}
\]
Therefore
\[
\begin{aligned}
&q\norm{X_k}^2+\norm{Y_k}^2
  -q\norm{X_{k+1}}^2-\norm{Y_{k+1}}^2\\
&\quad=-q\ip{X_{k+1}+X_k}{X_{k+1}-X_k}
       -\ip{Y_{k+1}+Y_k}{Y_{k+1}-Y_k}\\
&\quad=-q\tau_k\ip{X_{k+1}+X_k}{Y_{k+1}+Y_k}
       +\tau_k\ip{Y_{k+1}+Y_k}
          {q(X_{k+1}+X_k)+2G_k}\\
&\quad=2\tau_k\ip{Y_{k+1}+Y_k}{G_k}
 =2\ip{X_{k+1}-X_k}{G_k}.
\end{aligned}
\]
Substituting this identity into~\eqref{E:itemw-decrease-interpolation} and
using~\eqref{D:itemw-potential} proves~\eqref{E:itemw-decrease}.  Its
right-hand side is nonnegative by~\eqref{E:sequence-order} and
\eqref{E:itemw-delta-positive}.
\end{proof}

\subsection{The matching upper bound}
\label{S:itemw-bound}

The potential decreases control the interior iterations, leaving two
endpoint comparisons.  At the initial point, $V_1$ is compared with the
weighted initial quantity; this is the only place where the weights appear
explicitly, and the discrepancy is controlled by the matching residual.  At
the final point, $V_N$ is compared with the function-value suboptimality at
$z_N$.  Combining these comparisons with the local decreases yields the
matching upper bound.

\begin{lemma}\label{L:itemw-boundaries}
Fix $L>0$, $0\leq\mu<L$, $N\geq1$, put $q=\mu/L$, and let $w$
satisfy the simplex conditions~\eqref{D:weight-simplex}.  Let $\eta>0$
satisfy $r_{\eta,w}\in\mathcal O_N(q)$, let $f\in\Fclass(\real^d)$ have
minimizer $x_*$, and let
the iterates be generated by Algorithm~\ref{A:itemw}.  Then the
potential~\eqref{D:itemw-potential} satisfies:
\begin{enumerate}[label=(\roman*),leftmargin=2.2em]
    \item At the initial point,
    \begin{equation}\label{E:itemw-initial-boundary}
    \begin{aligned}
    &\eta\biggl(
           w_x\frac L2\norm{x_0-x_*}^2
           +w_f(f(x_0)-f_*)
           +\frac{w_g}{2L}\norm{\nabla f(x_0)}^2
         \biggr)-V_1\\
    &\qquad=\left(\eta w_x+\sqrt{\frac{\eta}{\bar q}}\,b_1\right)\Delta_{*,0}
      -H_w(\eta)\,\Delta_{0,*}
      -\frac{qLH_w(\eta)}{\bar q}\norm{x_0^+-x_*}^2,
    \end{aligned}
    \end{equation}
    and the right-hand side is nonnegative if $H_w(\eta)\leq0$.
    \item At the output,
    \begin{equation}\label{E:itemw-terminal-boundary}
    \begin{aligned}
    V_N-(f(z_N)-f_*)={}&
     (1-a_N)\Delta(x_{N-1},z_N)
     +a_N\Delta(x_*,z_N)\\
    &+\frac{L}{2\bar q}
     \norm{(z_N^+-x_*)-X_N}^2
     \geq0.
    \end{aligned}
    \end{equation}
    The point $z_N^+$ and the values $f(z_N),\nabla f(z_N)$ are used only in
    the analysis and do not require an additional oracle call.
\end{enumerate}
\end{lemma}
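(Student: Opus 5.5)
Both parts are algebraic identities in the oracle data at two or three points. The plan is to verify each one by expanding every slack $\Delta$ from~\eqref{E:itemw-delta}. After that, the function values, inner products, and squared norms should cancel, using only the scalar identities of Lemmas~\ref{L:recurrence-interface}, \ref{L:recurrence-interface-matched} and the matching residual~\eqref{E:mixed-matching-residual}. Nonnegativity will then follow from~\eqref{E:itemw-delta-positive}, $0<a_N<1$, and the sign of $H_w(\eta)$.

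\emph{Part (i).} At $k=0$ we have $z_0=v_{-1}=x_0$, so $X_1=(1-a_1)(x_0^+-x_*)$ by~\eqref{E:itemw-output-state}. Moreover $v_0=x_0-\frac{c_1}{b_1}h_0$ with $h_0=L^{-1}\nabla f(x_0)$, since $x_0=z_0$. Hence $Y_1=b_1(x_0-x_*)-c_1L^{-1}\nabla f(x_0)$.

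I would write everything in the variables $u:=x_0-x_*$, $g:=L^{-1}\nabla f(x_0)$, and $\phi:=f(x_0)-f_*$. In these variables $x_0^+-x_*=u-g$, and
\[
\Delta_{0,*}=\phi-\tfrac L2\norm g^2-\tfrac{\mu}{2\bar q}\norm{u-g}^2,\qquad
\Delta_{*,0}=-\phi+L\ip gu-\tfrac L2\norm g^2-\tfrac{\mu}{2\bar q}\norm{u-g}^2.
\]
Both sides of~\eqref{E:itemw-initial-boundary} are then linear combinations of $\phi$, $\norm u^2$, $\ip ug$, $\norm g^2$. I would match the four coefficients:
\begin{itemize}
\item The $\phi$-coefficient on the left is $\eta w_f-(1-a_1)$. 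On the right it is $-\eta w_x-\sqrt{\eta/\bar q}\,b_1-H_w$, and these agree by the definition of $H_w$.
\item The $\norm u^2$ coefficient should reduce to the radius identity $qa_1^2+b_1^2=r_{\eta,w}^2=q+\eta(\bar qw_x-qw_f)$ from~\eqref{E:recurrence-interface-factorizations} and~\eqref{D:risk-coefficient-induced-radius}.
\item The $\ip ug$ coefficient should use $c_1=b_1+\sqrt{\bar q\eta}$.
\item The $\norm g^2$ coefficient should use the expression for $c_1^2$ in~\eqref{E:recurrence-interface-c1-square}, which is where $w_g$ enters.
\end{itemize}
For the sign claim, when $H_w\le0$ the right-hand side is a nonnegative combination of $\Delta_{*,0}$, $\Delta_{0,*}$ and a squared norm. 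This uses $\eta w_x+\sqrt{\eta/\bar q}\,b_1\geq0$, which holds because $b_1>0$.

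\emph{Part (ii).} Expand $\Delta(x_{N-1},z_N)$ and $\Delta(x_*,z_N)$. The terms in $f(z_N)$ have total coefficient $-(1-a_N)+a_N\cdot(+1)$ from the second slack; the correct bookkeeping is $\Delta(x_*,z_N)=f_*-f(z_N)-\dots$. Together with $\Delta_{N-1,*}$ inside $V_N$ (whose $f(x_{N-1})$ terms cancel), this should leave exactly $f(z_N)-f_*$.

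The gradient terms in $\nabla f(z_N)$ combine into inner products with $(1-a_N)(x_{N-1}-z_N)+a_N(x_*-z_N)$. Writing $z_N$ via its update and $X_N=(1-a_N)(x_{N-1}^+-x_*)$, I expect the quadratic remainder to complete into the square $\frac{L}{2\bar q}\norm{z_N^+-x_*-X_N}^2$. The $\norm{Y_N}^2$ term, with $b_N^2=\bar q r^2=\bar q a_N^2$ and $Y_N=b_N(v_{N-1}-x_*)$, should absorb the $\mu$-terms involving $v_{N-1}$. The endpoint values $a_N=r$, $b_N=\sqrt{\bar q}\,r$ are exactly what make this completion work.

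\emph{Main obstacle.} I expect the main difficulty to be the bookkeeping in (ii), namely confirming that the cross terms between $x_{N-1}^+$, $v_{N-1}$, and $z_N^+$ collapse to a single square. My first step there would be to reduce to the scalar test: expand in the basis $\{x_{N-1}^+-x_*,\ v_{N-1}-x_*,\ g_{N-1},\ g_{z}\}$ and check the Gram coefficients. The $\norm{g_{N-1}}^2$ coefficient should already vanish, because the $\frac1{2L}$ terms of $\Delta_{N-1,*}$ and $\Delta(x_{N-1},z_N)$ combine.
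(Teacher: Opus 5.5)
Your plan is correct and is essentially the paper's own argument. Both parts are algebraic identities, verified with exactly the scalar inputs you name: the invariant $qa_1^2+b_1^2=r_{\eta,w}^2$, $c_1=b_1+\sqrt{\bar q\eta}$, the residual definition together with $w_x+w_f+w_g=1$ (entering through the $c_1^2$ formula), and $b_N=\sqrt{\bar q}\,a_N$. The Gram coefficients do match in your bases. The $\langle u,g\rangle$ coefficient needs the radius identity as well as the formula for $c_1$, and the right-hand $f(z_N)$ coefficient is $-(1-a_N)-a_N=-1$, as your own correction indicates. The paper only organizes the bookkeeping differently: it converts the leftover inner products $\langle x_0^+-x_*,h_0\rangle$ and $\langle X_N,G\rangle$ into slacks through the three-point identity of Lemma~\ref{L:three-point}, rather than matching coefficients term by term.
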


\begin{proof}
\emph{Initial boundary.}
Since $z_0=x_0$, the first iteration of Algorithm~\ref{A:itemw} gives
$h_0=x_0-x_0^+=L^{-1}\nabla f(x_0)$.  Put
\[
    p_0^+:=x_0^+-x_*,
\]
so that $x_0-x_*=p_0^++h_0$, and denote the weighted initial quantity by
\[
    \mathcal E_0
    :=w_x\frac L2\norm{x_0-x_*}^2+w_f(f(x_0)-f_*)
      +\frac{w_g}{2L}\norm{\nabla f(x_0)}^2.
\]
From the initialization in Algorithm~\ref{A:itemw}, the definition of
$c_1$ in~\eqref{D:recurrence-interface-coefficients}, and
\eqref{E:itemw-output-state},
\[
    X_1=(1-a_1)p_0^+,
    \qquad
    Y_1=b_1p_0^+-\sqrt{\bar q\eta}\,h_0.
\]
Substituting these expressions into the potential and then expanding the two
squared norms gives
\[
\begin{aligned}
\eta\mathcal E_0-V_1
&=\eta w_f(f(x_0)-f_*)-(1-a_1)\Delta_{0,*}\\
&\quad
 +\frac{L}{2\bar q}\Bigl(
      \eta\bar q w_x\norm{p_0^++h_0}^2
      +\eta\bar q w_g\norm{h_0}^2
      -q(1-a_1)^2\norm{p_0^+}^2
      -\norm{b_1p_0^+-\sqrt{\bar q\eta}\,h_0}^2
   \Bigr)\\
&=\eta w_f(f(x_0)-f_*)-(1-a_1)\Delta_{0,*}\\
&\quad
 +\frac{L}{2\bar q}\Bigl(
      \bigl[\eta\bar q w_x-q(1-a_1)^2-b_1^2\bigr]\norm{p_0^+}^2\\
&\hspace{34mm}
      +2\bigl[\eta\bar q w_x+b_1\sqrt{\bar q\eta}\bigr]
          \ip{p_0^+}{h_0}
      -\eta\bar q w_f\norm{h_0}^2
   \Bigr),
\end{aligned}
\]
where the second equality also uses $w_x+w_f+w_g=1$.
The invariant~\eqref{E:recurrence-interface-factorizations}, the radius
relation~\eqref{D:risk-coefficient-induced-radius}, and the matching
residual~\eqref{E:mixed-matching-residual} give
\[
\begin{aligned}
    b_1^2&=q(1-a_1^2)+\eta(\bar q w_x-qw_f),\\
    1-a_1-\eta w_f-H_w(\eta)
      &=\eta w_x+\sqrt{\frac{\eta}{\bar q}}\,b_1.
\end{aligned}
\]
Substituting these identities and collecting the terms proportional to
$w_f$ continues the calculation as
\[
\begin{aligned}
\eta\mathcal E_0-V_1
&=\eta w_f(f(x_0)-f_*)-(1-a_1)\Delta_{0,*}\\
&\quad
 +\frac{L}{2\bar q}\Bigl(
      q\bigl(\eta w_f-2(1-a_1)\bigr)\norm{p_0^+}^2\\
&\hspace{31mm}
      +2\bar q\bigl(1-a_1-\eta w_f-H_w(\eta)\bigr)
          \ip{p_0^+}{h_0}
      -\eta\bar q w_f\norm{h_0}^2
   \Bigr)\\
&=-(1-a_1)\Delta_{0,*}
  -\frac{qL(1-a_1)}{\bar q}\norm{p_0^+}^2
  +L\bigl(1-a_1-H_w(\eta)\bigr)\ip{p_0^+}{h_0}\\
&\quad
 +\eta w_f\left(
      f(x_0)-f_*-L\ip{p_0^+}{h_0}
      -\frac L2\norm{h_0}^2
      +\frac{qL}{2\bar q}\norm{p_0^+}^2
   \right)\\
&=-\eta w_f\Delta_{*,0}-(1-a_1)\Delta_{0,*}
  -\frac{qL(1-a_1)}{\bar q}\norm{p_0^+}^2
  +L\bigl(1-a_1-H_w(\eta)\bigr)\ip{p_0^+}{h_0}.
\end{aligned}
\]
The last equality uses~\eqref{E:itemw-delta} at $(x_*,x_0)$, for which the
parenthesized expression is $-\Delta_{*,0}$.  The remaining inner product is
converted to interpolation slacks by the three-point
identity~\eqref{E:itemw-delta-identity} at $x=z=x_0$,
\[
    L\ip{p_0^+}{h_0}
      =\Delta_{0,*}+\Delta_{*,0}
        +\frac{qL}{\bar q}\norm{p_0^+}^2.
\]
Substituting this identity gives
\[
\begin{aligned}
\eta\mathcal E_0-V_1
&=\bigl(1-a_1-\eta w_f-H_w(\eta)\bigr)\Delta_{*,0}
  -H_w(\eta)\left(
      \Delta_{0,*}+\frac{qL}{\bar q}\norm{p_0^+}^2
   \right)\\
&=\left(\eta w_x+\sqrt{\frac{\eta}{\bar q}}\,b_1\right)\Delta_{*,0}
 -H_w(\eta)\Delta_{0,*}
 -\frac{qLH_w(\eta)}{\bar q}\norm{x_0^+-x_*}^2,
\end{aligned}
\]
where the last equality uses the second identity above and the definition of
$p_0^+$.
This proves~\eqref{E:itemw-initial-boundary}.

If $H_w(\eta)\leq0$, then $b_1>0$ by ordering and all interpolation
slacks are nonnegative by~\eqref{E:itemw-delta-positive}; hence every term
on the right-hand side of~\eqref{E:itemw-initial-boundary} is nonnegative.

\emph{Terminal boundary.}
We rewrite $V_N$ in terms of $z_N$ and $X_N$.  The terminal interpolation
slack then puts the function-value term in the same quadratic variables,
after which the three-point identity removes the remaining inner product.
From the definition of $z_N$ and~\eqref{D:itemw-scaled-states},
\[
    z_N-x_*=X_N+a_N(v_{N-1}-x_*).
\]
Since $b_N=\sqrt{\bar q}\,a_N$, this is equivalent to
\begin{equation}\label{E:itemw-terminal-state}
    Y_N=\sqrt{\bar q}\bigl[(z_N-x_*)-X_N\bigr].
\end{equation}
Write
\[
    G:=z_N-z_N^+-q(z_N-x_*)
      =L^{-1}\bigl(\nabla f(z_N)-\mu(z_N-x_*)\bigr).
\]

Using~\eqref{E:itemw-terminal-state} in the definition of $V_N$ first gives
\[
\begin{aligned}
V_N-(f(z_N)-f_*)
&=(1-a_N)\Delta_{N-1,*}-(f(z_N)-f_*)\\
&\quad+\frac{L}{2\bar q}\left(
     q\norm{X_N}^2
     +\bar q\norm{(z_N-x_*)-X_N}^2
   \right).
\end{aligned}
\]
For $(x,z)=(x_*,z_N)$, the interpolation-slack definition specializes to
\[
    \Delta(x_*,z_N)
      =-(f(z_N)-f_*)
       +\frac{L}{2\bar q}\left(
           \bar q\norm{z_N-x_*}^2-\norm{z_N^+-x_*}^2
         \right).
\]
Using this expression and then
$z_N^+-x_*=\bar q(z_N-x_*)-G$ gives the chained calculation
\[
\begin{aligned}
V_N-(f(z_N)-f_*)
&=(1-a_N)\Delta_{N-1,*}+\Delta(x_*,z_N)\\
&\quad+\frac{L}{2\bar q}\Bigl(
     q\norm{X_N}^2
     +\bar q\norm{(z_N-x_*)-X_N}^2\\
&\hspace{35mm}
     -\bar q\norm{z_N-x_*}^2+\norm{z_N^+-x_*}^2
   \Bigr)\\
&=(1-a_N)\Delta_{N-1,*}+\Delta(x_*,z_N)
  -\frac{L}{\bar q}\ip{X_N}{G}\\
&\quad+\frac{L}{2\bar q}
   \norm{(z_N^+-x_*)-X_N}^2.
\end{aligned}
\]

Finally, applying~\eqref{E:itemw-delta-identity} at
$(x,z)=(x_{N-1},z_N)$ and using
$X_N=(1-a_N)(x_{N-1}^+-x_*)$ identifies the inner product as
\[
\frac{L}{\bar q}\ip{X_N}{G}
=(1-a_N)\bigl[
    \Delta_{N-1,*}-\Delta(x_{N-1},z_N)+\Delta(x_*,z_N)
\bigr].
\]
Substitution leaves
\[
\begin{aligned}
V_N-(f(z_N)-f_*)
&=(1-a_N)\Delta(x_{N-1},z_N)
  +a_N\Delta(x_*,z_N)\\
&\quad+\frac{L}{2\bar q}
   \norm{(z_N^+-x_*)-X_N}^2.
\end{aligned}
\]
This proves~\eqref{E:itemw-terminal-boundary}.  Since the associated sequence
is ordered, $0<a_N<1$, and the interpolation slacks are nonnegative by
\eqref{E:itemw-delta-positive}; hence every term on the right-hand side is
nonnegative.
\end{proof}

\begin{theorem}[ITEM-w upper bound]
\label{T:itemw-performance}
Fix $L>0$, $0\leq\mu<L$, $N\geq1$, put $q=\mu/L$, and let $w$
satisfy the simplex conditions~\eqref{D:weight-simplex}.  Let $\eta>0$
satisfy $r_{\eta,w}\in\mathcal O_N(q)$ and $H_w(\eta)\leq0$.  For every
dimension $d\geq1$, every
$f\in\Fclass(\real^d)$ with a minimizer $x_*$, and every initial point
$x_0\in\real^d$, ITEM-w (Algorithm~\ref{A:itemw}) constructed from $\eta$
produces an output $z_N$ satisfying
\begin{equation}\label{E:itemw-upper}
    f(z_N)-f_*\leq\eta\biggl(\frac{Lw_x}{2}\norm{x_0-x_*}^2+w_f(f(x_0)-f_*)+\frac{w_g}{2L}\norm{\nabla f(x_0)}^2\biggr).
\end{equation}
\end{theorem}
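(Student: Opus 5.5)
The proof chains the three potential comparisons already established, so no new estimate is required. We telescope Lemma~\ref{L:itemw-decrease} over $k=1,\ldots,N-1$ to get $V_1\geq V_N$. When $N=1$ the chain is empty, and we simply use $V_1=V_N$. Every increment $V_k-V_{k+1}$ is a nonnegative combination of interpolation slacks: the coefficients $1-a_k$ and $a_k-a_{k+1}$ are positive by the ordering~\eqref{E:sequence-order}, and each slack is nonnegative by~\eqref{E:itemw-delta-positive}. This step needs only $r_{\eta,w}\in\mathcal O_N(q)$.

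At the output end, Lemma~\ref{L:itemw-boundaries}(ii) gives $f(z_N)-f_*\leq V_N$, since its right-hand side is a sum of nonnegative slacks and a squared norm with positive weights $1-a_N$ and $a_N$. At the initial end, Lemma~\ref{L:itemw-boundaries}(i) expresses $\eta\mathcal E_0-V_1$ as
\[
\Bigl(\eta w_x+\sqrt{\eta/\bar q}\,b_1\Bigr)\Delta_{*,0}
-H_w(\eta)\Delta_{0,*}
-\frac{qLH_w(\eta)}{\bar q}\norm{x_0^+-x_*}^2,
\]
where $\mathcal E_0$ is the weighted initial quantity in~\eqref{E:itemw-upper}. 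Under the hypothesis $H_w(\eta)\leq0$, each term is nonnegative: $b_1>0$ by ordering, $\eta,w_x\geq0$, and the slacks are nonnegative. Hence $V_1\leq\eta\mathcal E_0$.

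Chaining the three inequalities gives
\[
f(z_N)-f_*\leq V_N\leq V_1\leq\eta\mathcal E_0,
\]
which is exactly~\eqref{E:itemw-upper}.

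Two bookkeeping checks remain. First, the algorithm must be well defined: every denominator ($\bar q(1-a_{k+1})$ and $b_{k+1}$) is positive by ordering and Corollary~\ref{C:recurrence-positive}, and $c_1=b_1+\sqrt{\bar q\eta}>0$. Second, the estimate must be dimension-free: no step uses $d$, because the interpolation slack nonnegativity of \cite{TaylorHendrickxGlineur2017} holds in every $\real^d$ for any minimizer $x_*$.

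The substantive work is already in the two boundary identities. The one point to watch is the sign convention: the initial boundary needs $H_w(\eta)\leq0$, whereas the lower bound needs $H_w(\eta)\geq0$. At a risk coefficient both hold, since $H_w=0$ there.
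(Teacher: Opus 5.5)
Your proposal is correct and is essentially the paper's proof: under $H_w(\eta)\leq0$, Lemma~\ref{L:itemw-boundaries}(i) gives $V_1\leq\eta\mathcal E_0$, Lemma~\ref{L:itemw-decrease} gives $V_1\geq\cdots\geq V_N$, and Lemma~\ref{L:itemw-boundaries}(ii) gives $f(z_N)-f_*\leq V_N$. Your extra bookkeeping (the case $N=1$, positivity of the algorithm's denominators, dimension-independence) is accurate; the remark that $c_1>0$ is harmless but not needed, since $c_1$ appears only as a numerator.
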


\begin{proof}
If $H_w(\eta)\leq0$, the initial comparison~\eqref{E:itemw-initial-boundary}
gives
\[
    V_1\leq\eta\left(
       \frac{Lw_x}{2}\norm{x_0-x_*}^2
       +w_f(f(x_0)-f_*)
       +\frac{w_g}{2L}\norm{\nabla f(x_0)}^2
    \right).
\]
By Lemma~\ref{L:itemw-decrease}, $V_1\geq\cdots\geq V_N$, while the
terminal comparison~\eqref{E:itemw-terminal-boundary} gives
$f(z_N)-f_*\leq V_N$.  Combining these inequalities yields
\eqref{E:itemw-upper}.
\end{proof}

\begin{remark}
A closely related variant of ITEM-w has an interesting uniformity property.  In ITEM-w, 
all iterations after the first depend on the initial-weight vector through induced radius $r_{\eta,w}$ and are therefore independent of the particular choice of $\eta$ and $w$. The first iteration is different, as the definition of $c_1$ depends additionally on $\eta$.  If the first iteration is replaced by an ordinary gradient step with step size $L^{-1}$, the resulting method will therefore be identical for all weights $w$ whose exact coefficients induce the same radius.
We conjecture that this common method admits a worst-case guarantee that is within a factor four of the exact minimax value and that this bound holds uniformly over any choice of weights within that family. 
We leave the proof of this conjecture for future work.
\end{remark}

\section{Exact minimax characterization}
\label{S:weighted-exact}

The lower bound of Theorem~\ref{T:mixed-realization} and the upper
bound of Theorem~\ref{T:itemw-performance} are built from the same scalar
$\eta$.  The lower bound holds when $H_w(\eta)\geq0$, because the residual
increases the realized certificate in~\eqref{E:mixed-realized-lower}; the
upper bound holds when $H_w(\eta)\leq0$, because the initial comparison
bounds $V_1$ by the weighted initial quantity.  At a root, both bounds hold,
which proves the main theorem and determines the risk coefficient uniquely.

\begin{proof}[Proof of Theorem~\ref{T:intro-main}]
A risk coefficient exists by Proposition~\ref{P:mixed-existence}.  For
any risk coefficient $\eta$, $R>0$, and $d\geq2N+1$,
Theorem~\ref{T:mixed-realization} and Theorem~\ref{T:itemw-performance},
the latter applied to ITEM-w constructed from $\eta$, give
\[
    \frac{L}{2}\,\eta R^2
    \leq \risk{N}{\mathcal P^{\mu,L}_{w,R}(\real^d)}
    \leq \frac{L}{2}\,\eta R^2.
\]
Hence the minimax risk equals $\frac{L}{2}\,\eta R^2$ for every
risk coefficient $\eta$; since $R>0$, the risk coefficient is
therefore unique, and denoting it by $\eta_N^\star(q,w)$
gives~\eqref{E:weighted-exact-minimax}.  The upper bound uses no dimension
assumption, so ITEM-w satisfies it in every dimension.
\end{proof}

Two specializations make the scalar characterization explicit.  On the
convex boundary $q=0$ with $w_x>0$, the matching equation admits a closed-form
solution; at the distance vertex $w=w^x$, this solution recovers the exact OGM
rate.  For $q\in(0,1)$, the balanced relation
\[
    \bar q w_x=qw_f
\]
makes the induced radius~\eqref{D:risk-coefficient-induced-radius} independent
of $\eta$, with value $\sqrt q$.  Thus all weights satisfying this relation use
the same ordered backward sequence, and only the initial matching equation
depends on the particular weights.  Proposition~\ref{P:balanced-risk-coefficient}
solves this matching equation explicitly and gives the corresponding exact
minimax rate.  Both specializations are derived in
Appendix~\ref{A:specializations}.

\begin{remark}[Computation]
The explicit specializations described above require no root search.  In the
remaining cases with $q>0$, evaluating $H_w(\eta)$, whenever
$r_{\eta,w}\in\mathcal O_N(q)$, requires one backward pass through
Definition~\ref{D:backward-sequence}, and hence $O(N)$ scalar operations and
constant memory.  Let $J_w=[0,\eta_*)$ be the ordered component containing
zero from the proof of Proposition~\ref{P:mixed-existence}.  Theorem~\ref{T:mixed-realization},
Theorem~\ref{T:itemw-performance}, and the exact characterization above imply
that $H_w(\eta)>0$ for $0\leq\eta<\eta_N^\star(q,w)$ and
$H_w(\eta)<0$ for $\eta_N^\star(q,w)<\eta<\eta_*$.  The existence proof
provides a point $\eta_+\in J_w$ with $H_w(\eta_+)<0$; hence
$[0,\eta_+]$ brackets the unique root and lies entirely in the ordered
region.  Bisection on this interval therefore computes
$\eta_N^\star(q,w)$ to absolute accuracy $0<\varepsilon<\eta_+$ using
$O\!\left(N\log(\eta_+/\varepsilon)\right)$ scalar operations.
\end{remark}

\section{Concluding remarks}\label{S:concluding}

This paper gives an exact finite-horizon minimax characterization of
function-value suboptimality under nonnegative weighted combinations of the
initial squared distance, function-value suboptimality, and squared gradient
norm.  Whenever
$q>0$ or $w_x>0$, the exact deterministic minimax risk after $N$ first-order
oracle calls is
$\frac{L}{2}\,\eta_N^\star(q,w)R^2$ in dimension $d\geq2N+1$, where
$\eta_N^\star(q,w)$ is the unique risk coefficient characterized by the
matching equation~\eqref{E:mixed-matching-equation}; ITEM-w attains the same upper
bound in every dimension.  A single backward recurrence underlies both sides
of this characterization, determining the admissible hard chain and the
coefficients of ITEM-w.  The hard chain is optimal within the family of
admissible chains: by Corollary~\ref{C:certificate-bound}, no other member
yields a larger certificate value relative to its weighted budget at the risk
coefficient.

For the initial function-value condition,
Proposition~\ref{P:gap-correspondence} identifies the exact risk coefficient with the
ITEM-f contraction factor $\Upsilon_N^{-2}$.  The existing ITEM-f upper
guarantee is therefore matched here by an oracle-complexity lower bound,
establishing its exact finite-horizon minimax optimality.  On the distance
axis, the convex limit recovers the exact OGM value
of~\cite{Drori2017Exact,KimFessler2016}.  On the balanced slice
$\bar q w_x=qw_f$, Appendix~\ref{A:sqrtq-specialization} gives the exact
coefficient explicitly; the gradient axis is the endpoint $u=0$ of this slice.

Gradient-norm minimization gives a complementary finite-horizon problem.  In
the smooth convex setting and for $d\geq N+2$, Grimmer, Jo, and Park recently
proved in~\cite{GrimmerJoPark2026} that OGM-G is the unique fixed-step first-order method minimizing the
worst-case final squared gradient norm under a bound on the initial
function-value suboptimality, while the Lemniscate method~\cite{KimRyuDasGupta2026} is uniquely optimal
in the same class under a bound on the initial squared distance to a
minimizer.  Here fixed-step means that the method's
coefficients are prescribed in advance and do not depend on the observed
function values or gradients; optimality of OGM-G and Lemniscate among general
deterministic first-order methods remains open.  The zero-chain construction
developed in this paper is tailored to terminal function-value suboptimality
and, in its present form, does not yield matching lower bounds for either
OGM-G or Lemniscate.  Extending the hard-instance construction to terminal
squared gradient norm is therefore a natural direction toward corresponding
information-theoretic optimality results.

\section*{Declaration of generative AI and AI-assisted technologies}
During the preparation of this work the author extensively used publicly
available generative AI and AI-assisted technologies for exploratory
derivations, algebraic verification, drafting, and revision.  The author
reviewed and checked the mathematical statements and proofs and takes full
responsibility for the content of the publication.

\appendix

\section{Deferred proofs}
\label{A:deferred-proofs}

This appendix collects the proofs deferred from the main text, together
with the auxiliary constructions they use.

\subsection{Proposition~\ref{P:revealed-face}: projection onto the revealed convex hull}
\label{A:revealed-face-proof}

To prove Proposition~\ref{P:revealed-face}, we show that at each stage $j$,
weight on every future chain site $s_m$, $m>j$, can be transferred to $s_j$,
while weight on $s_*$ can be transferred to a suitable replacement point in
$\conv\{s_0,\ldots,s_j\}$.  The next lemma supplies the inequalities that
certify these transfers.

\begin{lemma}[Weight-transfer certificates]\label{L:generic-elimination}
Let $\mathcal C$ be an admissible chain, and let $(s_i)_{i\in I}$ be its
sites from Definition~\ref{D:chain-function}.  Then, for every $0\leq j<N$:
\begin{enumerate}[label=(\roman*),leftmargin=2.2em]
    \item $\ip{s_{j+1}-s_j}{s_i}\geq0$ for every $i\in I$;
    \item there exists $\widehat s_j\in\conv\{s_0,\ldots,s_j\}$, with $\widehat s_j\neq s_*$,
    such that
    \[
        s_*-\widehat s_j\perp S_{j-1},
        \qquad
        \ip{s_*-\widehat s_j}{s_i}\geq0,
        \quad i\in I.
    \]
\end{enumerate}
\end{lemma}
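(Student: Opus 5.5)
The plan is to write every site in coordinates and reduce both parts to sign checks on a few coordinates. Each check then follows from the admissibility conditions of Definition~\ref{D:admissible-chain}. From \eqref{D:generic-vertices}--\eqref{D:generic-sites},
\[
  \bar q s_i=\sum_{\ell<i}(q\delta_\ell-D_\ell)e_\ell+(\gamma_i-D_i)e_i-\sum_{\ell>i}D_\ell e_\ell,\qquad 0\le i\le N,
\]
while $\bar q s_*=-\bar q\sum_\ell D_\ell e_\ell$. Note that $s_*=\widetilde x_*$, and that its $\ell$th coordinate, $-D_\ell$, is the same as that of $s_i$ for all $\ell\ne i$ with $\ell>i$.

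\emph{Part (i).} Here $\bar q(s_{j+1}-s_j)=-(\gamma_j-q\delta_j)e_j+\gamma_{j+1}e_{j+1}$, so only coordinates $j,j+1$ of $s_i$ matter. I split into four cases.
\begin{itemize}
\item For $i<j$ and $i=*$, both coordinates are $(-D_j,-D_{j+1})$. The inner product, up to the factor $\bar q^{-2}$, is
\[
(\gamma_j-q\delta_j)D_j-\gamma_{j+1}D_{j+1}\ \ge\ (\gamma_j-q\delta_j)\gamma_j>0
\]
by \eqref{E:generic-balance} and \eqref{E:generic-signs}.
\item For $i=j$, the value is exactly the slack in \eqref{E:generic-balance}.
\item For $i=j+1$, the value is $(\gamma_j-q\delta_j)(D_j-q\delta_j)+\gamma_{j+1}(\gamma_{j+1}-D_{j+1})$. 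Since $D_j-q\delta_j\ge D_j-\gamma_j$, balance makes it at least $\gamma_{j+1}^2$.
\item For $i>j+1$ (so $j+1<N$ and $\delta_{j+1}$ exists), the value is $(\gamma_j-q\delta_j)(D_j-q\delta_j)-\gamma_{j+1}(D_{j+1}-q\delta_{j+1})$. This is at least $q\gamma_{j+1}\delta_{j+1}\ge0$, again by balance.
\end{itemize}

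\emph{Part (ii).} I take $\widehat s_j:=\sum_{k<j}(\pi_k-\pi_{k+1})s_k+\pi_j s_j$. This is a convex combination by \eqref{E:generic-pi-monotone} and $\pi_0=1$. For $\ell<j$, the $\ell$th coordinate of $\bar q\widehat s_j$ collapses to $\pi_\ell\gamma_\ell-\pi_{\ell+1}(\gamma_\ell-q\delta_\ell)-D_\ell$. By \eqref{E:generic-pi-recurrence} this equals $qD_\ell-D_\ell=-\bar qD_\ell$, which matches $s_*$; this gives $s_*-\widehat s_j\perp S_{j-1}$. The remaining coordinates give
\[
  \bar q(s_*-\widehat s_j)=-(\pi_j\gamma_j-qD_j)e_j+q\sum_{\ell>j}D_\ell e_\ell .
\]
Put $P_j:=\pi_j\gamma_j-qD_j$. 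It is positive for $j<N-1$ because $P_j=\pi_{j+1}(\gamma_j-q\delta_j)$, and for $j=N-1$ by \eqref{E:generic-terminal-admissibility}. So the $e_j$ coordinate is nonzero and $\widehat s_j\ne s_*$.

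The key auxiliary inequality, and the step I expect to be the main obstacle, is the tail bound
\[
  T_j:\qquad P_j\,(D_j-\gamma_j)\ \ge\ q\sum_{\ell>j}D_\ell^2,\qquad 0\le j\le N-1 .
\]
I prove it by backward induction. $T_{N-1}$ is exactly \eqref{E:generic-terminal-admissibility}. For the step, write $P_j(D_j-\gamma_j)=\pi_{j+1}(\gamma_j-q\delta_j)(D_j-\gamma_j)$. By \eqref{E:generic-balance} this is at least $\pi_{j+1}\gamma_{j+1}D_{j+1}=P_{j+1}D_{j+1}+qD_{j+1}^2$. Then I use $D_{j+1}\ge D_{j+1}-\gamma_{j+1}$, $P_{j+1}>0$ and $T_{j+1}$.

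With $T_j$ in hand, the inner products $\ip{s_*-\widehat s_j}{s_i}$ reduce to short case checks, each dominated by $T_j$ or made trivially nonnegative.
\begin{itemize}
\item For $i<j$ and $i=*$, the value is a positive multiple of $P_jD_j-q\sum_{\ell>j}D_\ell^2$ (times $\bar q$ for $i=*$). This is at least the left side of $T_j$ minus its right side.
\item For $i=j$, it is exactly $T_j$.
\item For $i>j$, the $e_j$ term contributes $P_j(D_j-q\delta_j)\ge P_j(D_j-\gamma_j)$. The $e_i$ term contributes $qD_i(D_i-\gamma_i)$ rather than $qD_i^2$, and the terms $\ell<i$ have $qD_\ell(D_\ell-q\delta_\ell)$. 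So the total exceeds what $T_j$ controls, up to the nonnegative extra terms $q^2D_\ell\delta_\ell$ and $qD_i\gamma_i$.
\end{itemize}
I will verify carefully that these sign bookkeeping steps close for $i>j$, since that is where the coordinate pattern is mixed.
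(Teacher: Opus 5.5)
Your proposal is correct. The $i>j$ bookkeeping you flagged does close: $\langle \bar q(s_*-\widehat s_j),\bar q s_i\rangle=P_j(D_j-q\delta_j)-q\sum_{\ell>j}D_\ell^2+q^2\sum_{j<\ell<i}D_\ell\delta_\ell+qD_i\gamma_i$. Since $q\delta_j<\gamma_j$ and $P_j>0$, this is at least $P_j(D_j-\gamma_j)-q\sum_{\ell>j}D_\ell^2\ge0$ by $T_j$.

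The paper organizes the argument differently. It first shows $qD_k<\pi_k\gamma_k\le\gamma_k$. This makes every offset $t_i$, including $t_*$, coordinatewise nonnegative and bounded by $\gamma_k$ in each coordinate $k<N$, which gives uniform bounds $(\bar q s_i)_\ell\ge-D_\ell$ and $(\bar q s_i)_k\le\gamma_k-D_k$ for all $i\in I$. From these bounds, part (i) and the terminal case $j=N-1$ of part (ii) follow in one line each, with no split by site. For $j<N-1$ the paper proves no tail inequality. It instead uses the vector identity $\bar q(\widehat s_{k+1}-\widehat s_k)=\pi_{k+1}(t_{k+1}-t_k)$, which writes $s_*-\widehat s_j$ as the terminal vector plus nonnegative multiples of adjacent-site differences, so part (ii) reduces to part (i).

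Your backward induction for $T_j$ is the scalar shadow of that telescoping: each step applies the balance condition once, weighted by $\pi_{j+1}$, and drops the same $P_k\gamma_k$ terms. Your site-by-site checks unpack the uniform coordinate bounds. The paper's route is shorter and avoids case analysis. Yours is more explicit: it identifies $i=j$ as the binding site and gives the closed-form certificate $T_j$.

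Two small points. For $j=N-1$ and $q=0$, terminal admissibility only yields $P_{N-1}\ge0$. Strict positivity, which you need for $\widehat s_{N-1}\neq s_*$ when $q=0$, comes from $P_{N-1}=\pi_{N-1}\gamma_{N-1}>0$, as the paper notes. Also, the coordinate $-D_\ell$ (for $\ell>i$) that you attribute to $s_i$ is really the coordinate of $\bar q s_i$. This only changes positive scalar factors in your inner products.
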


\begin{proof}
Admissibility gives $\gamma_k-q\delta_k>0$ and $D_k>\gamma_k$ for
$0\leq k<N$.  For $0\leq k<N-1$, the recurrence
~\eqref{E:generic-pi-recurrence} reads
\begin{equation}\label{E:pi-coefficient-recurrence}
    \pi_k\gamma_k-qD_k
      =(\gamma_k-q\delta_k)\pi_{k+1}>0.
\end{equation}
At the terminal coordinate, when $q>0$, terminal admissibility gives
\[
    \bigl(\pi_{N-1}\gamma_{N-1}-qD_{N-1}\bigr)
    (D_{N-1}-\gamma_{N-1})\geq qD_N^2>0.
\]
Since $D_{N-1}-\gamma_{N-1}>0$, the first factor is positive.  For $q=0$,
the same conclusion is immediate from
$\pi_{N-1}\gamma_{N-1}>0$.  Together with
\eqref{E:pi-coefficient-recurrence} and $\pi_k\leq1$ for $k<N$, we therefore have
\[
    qD_k<\pi_k\gamma_k\leq\gamma_k,
    \qquad 0\leq k<N.
\]

The site offsets of Definition~\ref{D:chain-function} now give
a common bound for all sites, including the minimizer site.  Every coordinate
of every $t_i$, $i\in I$, is nonnegative; moreover, for $0\leq k<N$,
$(t_i)_k\leq\gamma_k$.  For the chain sites this follows from
$q\delta_k<\gamma_k$, while for $t_*$ it follows from the preceding coordinate bound.  Since
$\bar q s_i=\widetilde x_*+t_i$, we therefore have
\begin{equation}\label{E:rescaled-site-coordinates}
    (\bar q s_i)_\ell\geq-D_\ell,
    \qquad 0\leq\ell\leq N,
    \qquad
    (\bar q s_i)_k\leq-D_k+\gamma_k,
    \qquad 0\leq k<N,
\end{equation}
for every $i\in I$.

\emph{Part (i): adjacent-site transfer.}
Fix $0\leq k<N$.  By~\eqref{D:generic-transformed-sites},
\[
    \bar q(s_{k+1}-s_k)=t_{k+1}-t_k
      =-(\gamma_k-q\delta_k)e_k+\gamma_{k+1}e_{k+1}.
\]
Hence, for every $i\in I$, the coordinate bounds above give
\[
\begin{aligned}
 \ip{t_{k+1}-t_k}{\bar q s_i}
   &=-(\gamma_k-q\delta_k)(\bar q s_i)_k
      +\gamma_{k+1}(\bar q s_i)_{k+1}\\
   &\geq (\gamma_k-q\delta_k)(D_k-\gamma_k)
      -\gamma_{k+1}D_{k+1}\geq0,
\end{aligned}
\]
where the last inequality is~\eqref{E:generic-balance}.  This proves
part~(i).

\emph{Part (ii): minimizer replacement.}
Fix $0\leq j<N$ and define
\[
    \widehat s_j
    :=\sum_{\ell=0}^{j-1}(\pi_\ell-\pi_{\ell+1})s_\ell+\pi_js_j.
\]
The coefficients are nonnegative by~\eqref{E:generic-pi-monotone} and
telescope to one, so $\widehat s_j\in\conv\{s_0,\ldots,s_j\}$.  Using $\bar q s_\ell=\widetilde x_*+t_\ell$ and the fact that the
coefficients defining $\widehat s_j$ sum to one,
\[
\bar q\widehat s_j-\widetilde x_*
 =\sum_{\ell=0}^{j-1}(\pi_\ell-\pi_{\ell+1})t_\ell+\pi_jt_j.
\]
For $\ell<j$, the $e_\ell$-coordinate of the right-hand side is
\[
\begin{aligned}
&(\pi_\ell-\pi_{\ell+1})\gamma_\ell
 +q\delta_\ell\left[
    \sum_{r=\ell+1}^{j-1}(\pi_r-\pi_{r+1})+\pi_j
  \right]\\
&\qquad={}
 (\pi_\ell-\pi_{\ell+1})\gamma_\ell
   +\pi_{\ell+1}q\delta_\ell\\
&\qquad={}
 \pi_\ell\gamma_\ell
   -\pi_{\ell+1}(\gamma_\ell-q\delta_\ell)
 =qD_\ell,
\end{aligned}
\]
where the bracket telescopes to $\pi_{\ell+1}$ and the last equality is
\eqref{E:pi-coefficient-recurrence}; its $e_j$-coordinate is
$\pi_j\gamma_j$, and all higher coordinates vanish.  Hence
\[
\bar q\widehat s_j-\widetilde x_*
  =q\sum_{\ell=0}^{j-1}D_\ell e_\ell+\pi_j\gamma_je_j,
\]
and therefore
\[
    \bar q(s_*-\widehat s_j)
      =-(\pi_j\gamma_j-qD_j)e_j
        +q\sum_{\ell=j+1}^ND_\ell e_\ell.
\]
This vector is orthogonal to $S_{j-1}$ and is nonzero: if $j<N-1$,
\eqref{E:pi-coefficient-recurrence} gives
$\pi_j\gamma_j-qD_j>0$, while for $j=N-1$ the same positivity follows from
the terminal calculation above.  At the terminal
stage, this identity and~\eqref{E:rescaled-site-coordinates}
give, for every $i\in I$,
\[
\begin{aligned}
 \ip{\bar q(s_*-\widehat s_{N-1})}{\bar q s_i}
   &=-\bigl(\pi_{N-1}\gamma_{N-1}-qD_{N-1}\bigr)
      (\bar q s_i)_{N-1}+qD_N(\bar q s_i)_N\\
   &\geq
      \bigl(\pi_{N-1}\gamma_{N-1}-qD_{N-1}\bigr)
      (D_{N-1}-\gamma_{N-1})-qD_N^2\geq0,
\end{aligned}
\]
where the last inequality is~\eqref{E:generic-terminal-admissibility}.
Finally, for $0\leq k<N-1$,
\[
    \bar q(\widehat s_{k+1}-\widehat s_k)
      =\pi_{k+1}(t_{k+1}-t_k).
\]
Summing from $k=j$ to $N-2$ gives
\[
    \bar q(s_*-\widehat s_j)
      =\bar q(s_*-\widehat s_{N-1})
        +\sum_{k=j}^{N-2}\pi_{k+1}(t_{k+1}-t_k).
\]
Taking inner products with $\bar q s_i$, the first term is nonnegative by
the terminal calculation and every summand is nonnegative by part~(i) and
$\pi_{k+1}>0$.  Hence
$\ip{s_*-\widehat s_j}{s_i}\geq0$ for every $i\in I$, proving part~(ii).
\end{proof}

\begin{proof}[Proof of Proposition~\ref{P:revealed-face}]
Write $p:=\Proj_{K_{\mathcal C}}(x)$.  Since $p\in K_{\mathcal C}$,
fix a convex representation $p=\sum_{i\in I}\alpha_i s_i$.  First fix $m>j$.
By~\eqref{E:generic-site-expansion}, each difference $s_{k+1}-s_k$ is supported on
coordinates $k,k+1$, so
\[
    s_m-s_j=\sum_{k=j}^{m-1}(s_{k+1}-s_k)\perp S_{j-1}.
\]
Lemma~\ref{L:generic-elimination}(i), summed over $k$, gives
$\ip{s_m-s_j}{s_i}\geq0$ for every $i\in I$, hence
$\ip{s_m-s_j}{s_i-x}\geq0$.  Moreover $s_m\neq s_j$, because
$\bar q s_m-\widetilde x_*$ has a positive $e_m$-coordinate whereas
$\bar q s_j-\widetilde x_*$ does not.  Applying
Lemma~\ref{L:face-exchange} with $z=s_j$ gives $\alpha_m=0$.

For the minimizer site, let $\widehat s_j$ be the point supplied by
Lemma~\ref{L:generic-elimination}(ii).  Then $\widehat s_j\in K_{\mathcal C}$,
$\widehat s_j\neq s_*$, $s_*-\widehat s_j\perp S_{j-1}$, and
$\ip{s_*-\widehat s_j}{s_i}\geq0$ for every $i\in I$; since $x\in S_{j-1}$, the last
two give $\ip{s_*-\widehat s_j}{s_i-x}\geq0$.  A second application of
Lemma~\ref{L:face-exchange} therefore gives $\alpha_*=0$.
Thus $p\in\conv\{s_0,\ldots,s_j\}$, proving the first claim
in~\eqref{E:revealed-face-projection}.

By~\eqref{E:generic-site-expansion},
$\bar q s_i-\widetilde x_*=q\sum_{\ell=0}^{i-1}\delta_\ell e_\ell+\gamma_ie_i\in S_i$.
The gradient formula~\eqref{E:projection-gradient} therefore gives
\[
    \frac1L\nabla F_{\mathcal C}(x)
    =qx+\sum_{i=0}^j\alpha_i(\bar q s_i-\widetilde x_*)\in S_j,
\]
which proves the second claim.
\end{proof}

\subsection{Lemma~\ref{L:certificate-data}: certificate and initial-condition data}
\label{A:certificate-data-proof}

\begin{proof}[Proof of Lemma~\ref{L:certificate-data}]
We show that the certificate value $\Phi$ of an admissible chain bounds the
suboptimality of $F_{\mathcal C}$ on the final revealed subspace $S_{N-1}$ from
below, and that $B_x$, $B_f$, and $B_g$ are the squared initial distance, the scaled
initial suboptimality, and the scaled initial squared gradient norm of
$F_{\mathcal C}$ at $x_0=0$.

\emph{Part (i).}
Since $s_*=\widetilde x_*$ by~\eqref{D:generic-sites}, Lemma~\ref{L:smoothed-interpolant}(ii) gives
$F_{\mathcal C}(\widetilde x_*)=0$ and shows that $\widetilde x_*$ is a minimizer.
The support inequality~\eqref{E:support-ineq} gives
$F_{\mathcal C}(x)\geq Q_N(x)$ for every $x$.  The chain point
$\widetilde x_N=-\sum_{\ell=0}^{N-1}\delta_\ell e_\ell$ belongs to $S_{N-1}$, and
\[
    \nabla Q_N(\widetilde x_N)=\widetilde g_N=L\gamma_Ne_N\perp S_{N-1}.
\]
Because $Q_N$ is convex, $\widetilde x_N$ minimizes $Q_N$ over $S_{N-1}$.  Therefore,
for every $x\in S_{N-1}$,
\[
    F_{\mathcal C}(x)
    \geq Q_N(x)
    \geq Q_N(\widetilde x_N).
\]
Taking the infimum over $S_{N-1}$ proves the inequality in
\eqref{E:generic-restricted-value}.  To identify its value, complete the
square in $Q_N(\widetilde x_N)$:
\[
\begin{aligned}
 Q_N(\widetilde x_N)
 &=\frac{L}{2\bar q}\left[
    \bar qD_N^2-(D_N-\gamma_N)^2
    -q\sum_{\ell=0}^{N-1}(D_\ell-\delta_\ell)^2
  \right]\\
 &=\frac{L}{2}\Phi,
\end{aligned}
\]
which proves the equality in~\eqref{E:generic-restricted-value}.

\emph{Part (ii).}
Apply Proposition~\ref{P:revealed-face} with $j=0$ and $x=0$.  Since the revealed
convex hull then consists only of $s_0$,
\[
    \Proj_{K_{\mathcal C}}(0)=s_0.
\]
We identify the squared-distance, function-value suboptimality, and squared-gradient-norm
quantities separately.

First, by~\eqref{D:generic-vertices},
\[
    \norm{\widetilde x_*}^2=\sum_{\ell=0}^ND_\ell^2=B_x.
\]
For the function-value suboptimality, the squared-distance representation gives
\[
    F_{\mathcal C}(0)
      =\frac L2\norm{\widetilde x_*}^2
       -\frac{L-\mu}{2}\norm{s_0}^2.
\]
The vector $\bar q s_0=\widetilde x_*+t_0$ has coordinate
$-(D_0-\gamma_0)$ at $\ell=0$ and coordinate $-D_\ell$ for $\ell\geq1$.  Hence
\[
    \bar q^2\norm{s_0}^2
      =(D_0-\gamma_0)^2+\sum_{\ell=1}^ND_\ell^2.
\]
Substitution and $L-\mu=L\bar q$ give
\[
\begin{aligned}
    \frac{2}{L}F_{\mathcal C}(0)
      &=\sum_{\ell=0}^ND_\ell^2
        -\frac{(D_0-\gamma_0)^2+\sum_{\ell=1}^ND_\ell^2}{\bar q}\\
      &=D_0^2-
        \frac{(D_0-\gamma_0)^2+q\sum_{\ell=1}^ND_\ell^2}{\bar q}
       =B_f.
\end{aligned}
\]
For the gradient, the projection formula~\eqref{E:projection-gradient} at
$x=0$ gives
\[
\begin{aligned}
    \frac1L\nabla F_{\mathcal C}(0)
      &=-q\widetilde x_*+\bar q(s_0-\widetilde x_*)\\
      &=\bar q s_0-\widetilde x_*=t_0=\gamma_0e_0.
\end{aligned}
\]
Thus
\[
    \frac1{L^2}\norm{\nabla F_{\mathcal C}(0)}^2
      =\gamma_0^2=B_g.
\]
This proves the three identifications in~\eqref{E:generic-initial-quantities}.

The sign conditions for an admissible chain give $D_0>0$ and $\gamma_0>0$,
so $B_x,B_g>0$.  Also $\nabla F_{\mathcal C}(0)\neq0$, whereas $\widetilde x_*$ is a
minimizer.  Hence $0$ is not a minimizer, and therefore
$F_{\mathcal C}(0)>0$.  Thus $B_f>0$ as well.
\end{proof}

\subsection{Boundary identities}
\label{A:boundary-identities}

This subsection proves the two boundary lemmas of
Section~\ref{S:recurrence-matched}.

\begin{proof}[Proof of Lemma~\ref{L:recurrence-interface-matched}]
The definition~\eqref{E:mixed-matching-residual} of the residual gives
\[
    \sqrt{\bar q\eta}\,b_1
      =\bar q\bigl(1-a_1-\eta(w_x+w_f)-H_w(\eta)\bigr).
\]
Using the radius definition~\eqref{D:risk-coefficient-induced-radius} and the
invariant $b_1^2=r_{\eta,w}^2-qa_1^2$ from
\eqref{E:recurrence-interface-factorizations},
\[
\begin{aligned}
    c_1b_1
      &=b_1^2+\sqrt{\bar q\eta}\,b_1\\
      &=(1-a_1)(1+qa_1)-\eta w_f-\bar q H_w(\eta).
\end{aligned}
\]
The first row of~\eqref{D:recurrence-boundary} therefore gives
\[
    a_0
      =d_1a_1+c_1b_1
      =1-\eta w_f-\bar q H_w(\eta).
\]
The second row gives directly
\[
    b_0
      =-qc_1a_1+d_1b_1
      =\bar q\,b_1-qa_1\sqrt{\bar q\eta}.
\]

Finally, since $(c_1-b_1)^2=\bar q\eta$,
\[
\begin{aligned}
    c_1^2
      &=2c_1b_1-b_1^2+\bar q\eta\\
      &=(1-a_1)(1+\bar q+qa_1)
         +\eta(\bar q w_g-w_f)-2\bar q H_w(\eta),
\end{aligned}
\]
where the last line uses
\eqref{D:risk-coefficient-induced-radius} and $w_x+w_f+w_g=1$.
\end{proof}

\begin{proof}[Proof of Lemma~\ref{L:recurrence-boundary-identities}]
For $k=0$, the boundary relation~\eqref{D:recurrence-boundary} gives
\[
    a_k=d_{k+1}a_{k+1}+c_{k+1}b_{k+1},
    \qquad
    b_k=-qc_{k+1}a_{k+1}+d_{k+1}b_{k+1};
\]
for $1\leq k<N$, the same two identities are~\eqref{D:sequence} with
index $k+1$.  Hence, for every $0\leq k<N$,
\[
\begin{aligned}
    a_kb_{k+1}-a_{k+1}b_k
      &=c_{k+1}(qa_{k+1}^2+b_{k+1}^2)\\
      &=r_{\eta,w}^2c_{k+1},
\end{aligned}
\]
where the last equality is the invariant
\eqref{E:recurrence-interface-factorizations}.  This proves
\eqref{E:recurrence-boundary-cross}.  For $1\leq k<N$, rearranging the first
row of~\eqref{D:sequence} gives
\[
\begin{aligned}
    c_{k+1}b_{k+1}
      &=a_k-d_{k+1}a_{k+1}\\
      &=(1-a_{k+1})(1+qa_{k+1})-(1-a_k),
\end{aligned}
\]
which proves~\eqref{E:recurrence-boundary-product} at the interior indices.
At $k=0$, the same calculation follows from the first row of
\eqref{D:recurrence-boundary}.
\end{proof}

\subsection{Lemma~\ref{L:certificate-decomposition}: certificate decomposition}
\label{A:certificate-decomposition-proof}

The decomposition separates into a local quadratic calculation and a
telescoping cross-index correction.  We first record the identity supplied by
the recurrence.  The proof of Lemma~\ref{L:certificate-decomposition} then
specializes this identity at the initial and interior indices, treats the
terminal term directly, and combines the remaining cross-index terms into the
chain-condition slacks.

\begin{lemma}[Local quadratic identity]
\label{L:certificate-local-identity}
Let $N\geq1$, $q\in[0,1)$, let $w$ satisfy the simplex
conditions~\eqref{D:weight-simplex}, and let $\eta>0$ satisfy
$r_{\eta,w}\in\mathcal O_N(q)$.
Use the associated recurrence and boundary data from
Sections~\ref{S:recurrence-interface} and~\ref{S:recurrence-matched}, and put
$r:=r_{\eta,w}$.  Then, for every $0\leq k<N$ and every
$D,\gamma,\delta\in\mathbb R$,
\begin{align}
&(b_{k+1}D-c_{k+1}\gamma)^2
 +q\bigl(\delta-a_{k+1}D-(1-a_{k+1})\gamma\bigr)^2\notag\\
&\qquad
 +2(1-a_{k+1})(\gamma-q\delta)(D-\gamma)\notag\\
&\quad={}
 (r^2-q)D^2+q(D-\delta)^2+2(1-a_k)D\gamma\notag\\
&\qquad
 +\bigl[c_{k+1}^2
        -(1-a_{k+1})(1+\bar q+qa_{k+1})\bigr]\gamma^2.
\label{E:certificate-local-identity}
\end{align}
\end{lemma}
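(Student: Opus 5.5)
Both sides of \eqref{E:certificate-local-identity} are quadratic forms in $(D,\gamma,\delta)$, so the plan is to compare the six monomial coefficients $D^2,\gamma^2,\delta^2,D\gamma,D\delta,\gamma\delta$. Writing $a:=a_{k+1}$, $b:=b_{k+1}$, $c:=c_{k+1}$, we expand the left-hand side and only then bring in the recurrence. Two facts should suffice. The first is the invariant $qa^2+b^2=r^2$ from \eqref{E:recurrence-interface-factorizations}, which applies because $k+1\geq1$. The second is the product identity \eqref{E:recurrence-boundary-product}, $bc=(1-a)(1+qa)-(1-a_k)$, which Lemma~\ref{L:recurrence-boundary-identities} supplies for every $0\leq k<N$, including the boundary index $k=0$. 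Because these are the only inputs, and both hold uniformly in $k$ including the boundary extension \eqref{D:recurrence-interface-coefficients}--\eqref{D:recurrence-boundary}, no separate treatment of $k=0$ is needed. In particular, $c_1$ never has to be given by the interior formula \eqref{D:backward-coupling}. This is why the $\gamma^2$ coefficient on the right is stated as a bracket rather than simplified away.

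The monomials that do not involve the recurrence come first. The $\delta^2$ coefficient is $q$ on both sides. For $D\delta$, the left gives $-2qa$ from the second square and $-2q(1-a)$ from the cross term, which sum to $-2q$, and this matches $q(D-\delta)^2$. For $\gamma\delta$, the left gives $-2q(1-a)$ from the square and $+2q(1-a)$ from $-2(1-a)q\delta\cdot(-\gamma)$, which cancel, and the right has no $\gamma\delta$ term. For $D^2$, the left gives $b^2+qa^2=r^2$ by the invariant, and the right gives $(r^2-q)+q=r^2$.

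The remaining two coefficients use the recurrence. For $\gamma^2$, the left gives $c^2+q(1-a)^2-2(1-a)$, and the right gives $c^2-(1-a)(2-q+qa)$. These agree because $q(1-a)^2-2(1-a)=(1-a)(qa-q-2)\cdot(-1)\cdot(-1)$; more directly, $(1-a)\bigl(q(1-a)-2\bigr)=-(1-a)(2-q+qa)$. For $D\gamma$, the left gives $-2bc+2qa(1-a)+2(1-a)$, which equals $2(1-a)(1+qa)-2bc$, and the right gives $2(1-a_k)$. These coincide precisely by \eqref{E:recurrence-boundary-product}.

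The only step I expect to need care is this last coefficient. It is the single place where the recurrence step from index $k+1$ to $k$ enters, and it is what ties $a_k$ to the index-$(k+1)$ data. Since Lemma~\ref{L:recurrence-boundary-identities} already covers $k=0$, the proof reduces to this bookkeeping.
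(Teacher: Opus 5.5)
Your proof is correct and matches the paper's: expand the left side, match the $D^2$ coefficient via the invariant $qa_{k+1}^2+b_{k+1}^2=r^2$, and match the $D\gamma$ coefficient via the product identity \eqref{E:recurrence-boundary-product}, which already covers $k=0$. One slip: your first rewriting of the $\gamma^2$ coefficient, $(1-a)(qa-q-2)\cdot(-1)\cdot(-1)$, has a sign error, but your ``more direct'' factorization $(1-a)\bigl(q(1-a)-2\bigr)=-(1-a)(2-q+qa)$ is correct and is all you need.
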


\begin{proof}
Fix $0\leq k<N$.  Expanding the left-hand side of
\eqref{E:certificate-local-identity} and collecting terms gives
\[
\begin{aligned}
&q(D-\delta)^2
 +\bigl[b_{k+1}^2-q(1-a_{k+1}^2)\bigr]D^2\\
&\quad
 +2\bigl[(1-a_{k+1})(1+qa_{k+1})
          -c_{k+1}b_{k+1}\bigr]D\gamma\\
&\quad
 +\bigl[c_{k+1}^2
        -(1-a_{k+1})(1+\bar q+qa_{k+1})\bigr]\gamma^2.
\end{aligned}
\]
By~\eqref{E:recurrence-interface-factorizations}, the coefficient of $D^2$
is $r^2-q$, while~\eqref{E:recurrence-boundary-product} identifies the
coefficient of $D\gamma$ as $2(1-a_k)$.  Substitution gives
\eqref{E:certificate-local-identity}.
\end{proof}

\begin{proof}[Proof of Lemma~\ref{L:certificate-decomposition}]
Put $r:=r_{\eta,w}$ and scale the certificate gap by setting
\[
    \Delta:=\bar q\bigl[\eta(w_xB_x+w_fB_f+w_gB_g)-\Phi\bigr].
\]
Using~\eqref{D:chain-quantities}, so that
\[
    \bar q\Phi=\bar qD_N^2-(D_N-\gamma_N)^2
          -q\sum_{\ell=0}^{N-1}(D_\ell-\delta_\ell)^2,
\]
and expanding $B_x,B_f,B_g$ from~\eqref{E:chain-quantities-explicit} gives
the $q$-sum and terminal terms below.
Expanding $(D_0-\gamma_0)^2$ in the $B_f$ contribution and collecting the
$D_k^2$ terms leaves the common coefficient
$\eta(\bar q w_x-qw_f)$ and the initial remainder
$\eta w_f(2D_0\gamma_0-\gamma_0^2)$.  By
\eqref{D:risk-coefficient-induced-radius}, the common coefficient is
$r^2-q$.  Hence
\begin{equation}
\label{E:certificate-gap-expanded}
\begin{aligned}
\Delta
&=(r^2-q)\sum_{k=0}^ND_k^2
 +q\sum_{k=0}^{N-1}(D_k-\delta_k)^2
 -\bar qD_N^2+(D_N-\gamma_N)^2\\
&\quad
 +\eta w_f(2D_0\gamma_0-\gamma_0^2)
 +\bar q\eta w_g\gamma_0^2.
\end{aligned}
\end{equation}

The last line of~\eqref{E:certificate-gap-expanded} is the only block
involving the initial weights.  By~\eqref{E:recurrence-interface-a0},
$\eta w_f=1-a_0-\bar qH_w(\eta)$.  Substituting this relation and
separating the matching-residual contribution gives
\[
\begin{aligned}
&\eta w_f(2D_0\gamma_0-\gamma_0^2)
 +\bar q\eta w_g\gamma_0^2\\
&\quad={}
 2(1-a_0)D_0\gamma_0
 +\bigl[\eta(\bar q w_g-w_f)-2\bar qH_w(\eta)\bigr]\gamma_0^2
 -2\bar qH_w(\eta)\gamma_0(D_0-\gamma_0).
\end{aligned}
\]
Replacing the last line of
\eqref{E:certificate-gap-expanded} by this expression gives
\[
\begin{aligned}
\Delta
&=(r^2-q)\sum_{k=0}^ND_k^2
 +q\sum_{k=0}^{N-1}(D_k-\delta_k)^2
 -\bar qD_N^2+(D_N-\gamma_N)^2\\
&\quad
 +2(1-a_0)D_0\gamma_0
 +\bigl[\eta(\bar q w_g-w_f)-2\bar qH_w(\eta)\bigr]\gamma_0^2\\
&\quad
 -2\bar qH_w(\eta)\gamma_0(D_0-\gamma_0).
\end{aligned}
\]

Starting from this expression, add and subtract the terms
$2(1-a_k)D_k\gamma_k$ for $1\leq k\leq N$.  Group the added terms with
the contribution having the same index and retain the subtracted terms as a
single cross-index remainder.  This gives
\[
\begin{aligned}
\Delta
&=\underbrace{\left[
\begin{aligned}
 &(r^2-q)D_0^2+q(D_0-\delta_0)^2
 +2(1-a_0)D_0\gamma_0\\
 &\quad
 +\bigl[\eta(\bar q w_g-w_f)-2\bar qH_w(\eta)\bigr]\gamma_0^2
\end{aligned}
\right]}_{\text{initial block}}\\
&\quad+\sum_{k=1}^{N-1}\underbrace{\left[
 (r^2-q)D_k^2+q(D_k-\delta_k)^2
 +2(1-a_k)D_k\gamma_k
\right]}_{\text{interior block at }k}\\
&\quad+\underbrace{\left[
\begin{aligned}
 &(r^2-q)D_N^2-\bar qD_N^2+(D_N-\gamma_N)^2\\
 &\quad+2(1-a_N)D_N\gamma_N
\end{aligned}
\right]}_{\text{terminal block}}\\
&\quad-2\sum_{k=1}^N(1-a_k)D_k\gamma_k
 -2\bar qH_w(\eta)\gamma_0(D_0-\gamma_0).
\end{aligned}
\]
For $1\leq k<N$,~\eqref{D:backward-coupling} gives
\[
    c_{k+1}^2=(1-a_{k+1})(1+\bar q+qa_{k+1}),
\]
so~\eqref{E:certificate-local-identity}, with
$(D,\gamma,\delta)=(D_k,\gamma_k,\delta_k)$, identifies each interior block.
At $k=0$,~\eqref{E:recurrence-interface-c1-square} gives
\[
    c_1^2-(1-a_1)(1+\bar q+qa_1)
      =\eta(\bar q w_g-w_f)-2\bar qH_w(\eta),
\]
so the same identity identifies the initial block.  Finally, since $a_N=r$
and $q+\bar q=1$, the terminal block satisfies
\[
\begin{aligned}
 &(r^2-q)D_N^2-\bar qD_N^2+(D_N-\gamma_N)^2
   +2(1-a_N)D_N\gamma_N\\
 &\qquad=(\gamma_N-a_ND_N)^2.
\end{aligned}
\]
Substituting these three evaluations into the preceding display yields
\[
\begin{aligned}
\Delta
&=\sum_{k=0}^{N-1}(b_{k+1}D_k-c_{k+1}\gamma_k)^2
 +(\gamma_N-a_ND_N)^2\\
&\quad
 +q\sum_{k=0}^{N-1}
  \bigl(\delta_k-a_{k+1}D_k-(1-a_{k+1})\gamma_k\bigr)^2\\
&\quad
 +2\sum_{k=0}^{N-1}(1-a_{k+1})
  (\gamma_k-q\delta_k)(D_k-\gamma_k)\\
&\quad
 -2\sum_{k=1}^N(1-a_k)D_k\gamma_k
 -2\bar qH_w(\eta)\gamma_0(D_0-\gamma_0).
\end{aligned}
\]
Reindexing the penultimate sum from $1,\ldots,N$ to $0,\ldots,N-1$ and
combining it with the preceding sum gives
\[
\begin{aligned}
\Delta
&=\sum_{k=0}^{N-1}(b_{k+1}D_k-c_{k+1}\gamma_k)^2
 +(\gamma_N-a_ND_N)^2\\
&\quad
 +q\sum_{k=0}^{N-1}
  \bigl(\delta_k-a_{k+1}D_k-(1-a_{k+1})\gamma_k\bigr)^2\\
&\quad
 +2\sum_{k=0}^{N-1}(1-a_{k+1})
  \Bigl[(\gamma_k-q\delta_k)(D_k-\gamma_k)
        -D_{k+1}\gamma_{k+1}\Bigr]\\
&\quad
 -2\bar qH_w(\eta)\gamma_0(D_0-\gamma_0).
\end{aligned}
\]
Dividing by $\bar q$ and using the definition of $\Delta$ gives
\eqref{E:certificate-decomposition}.
\end{proof}

\section{Specializations of the exact coefficient}
\label{A:specializations}

The convex boundary $q=0$, $w_x>0$ and the balanced slice considered below
admit explicit solutions of the scalar matching problem.  We derive them
directly from the radius relation~\eqref{D:risk-coefficient-induced-radius}
and matching residual~\eqref{E:mixed-matching-residual}, independently of the
continuation argument in Proposition~\ref{P:mixed-existence}.

\subsection{The convex boundary $q=0$}
\label{A:explicit-risk-coefficients}

\begin{proposition}[Convex weighted risk coefficient]
\label{P:convex-weighted-risk-coefficient}
Let $q=0$ and let $w$ satisfy the simplex conditions
\eqref{D:weight-simplex} with $w_x>0$.  Define
\begin{equation}\label{D:convex-weighted-theta-initial}
    \Theta_1(w):=
    \sqrt{\frac{w_x+w_f+\sqrt{w_x}}{2w_x}},
\end{equation}
and recursively
\[
    \Theta_{k+1}(w)
      :=\frac{1+\sqrt{1+4\Theta_k(w)^2}}2,
      \qquad k\geq1.
\]
Then, for every $N\geq1$, the scalar
\[
    \frac{4}{w_x\left(1+\sqrt{1+8\Theta_N(w)^2}\right)^2}
\]
is a risk coefficient for the weight $w$ at $q=0$.
\end{proposition}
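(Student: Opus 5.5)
The plan is to solve the $q=0$ recurrence in closed form, running it forward from the matching equation, and then read off $\eta$ from the terminal condition $a_N=r$.

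\emph{Step 1: simplify the recurrence at $q=0$.} With $q=0$ we have $\bar q=1$, and \eqref{D:backward-coupling} gives $d_k=1$ and $c_k=\sqrt{2(1-a_k)}$. The recurrence \eqref{D:sequence} therefore becomes
\[
a_{k-1}=a_k+c_kb_k,\qquad b_{k-1}=b_k .
\]
Hence $b_k\equiv b_N=r$ for all $k$, which is consistent with the invariant \eqref{E:recurrence-interface-factorizations} at $q=0$. By \eqref{D:risk-coefficient-induced-radius}, $r^2=\eta w_x$, so $r=\sqrt{\eta w_x}>0$ because $w_x>0$. The matching equation \eqref{E:mixed-matching-equation} then reads $a_1+\eta(w_x+w_f)+\sqrt\eta\,r=1$. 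Equivalently,
\[
1-a_1=\eta\bigl(w_x+w_f+\sqrt{w_x}\bigr)=\frac{r^2}{w_x}\bigl(w_x+w_f+\sqrt{w_x}\bigr)=2r^2\,\Theta_1(w)^2 .
\]

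\emph{Step 2: linearize by the substitution $1-a_k=2r^2\theta_k^2$ with $\theta_k>0$.} The scalar recurrence becomes $2r^2\theta_{k-1}^2=2r^2\theta_k^2-r\cdot 2r\theta_k$, that is, $\theta_{k-1}^2=\theta_k^2-\theta_k$. Its positive solution in the forward direction is
\[
\theta_k=\frac{1+\sqrt{1+4\theta_{k-1}^2}}{2}.
\]
This is exactly the $\Theta$ recursion. Starting from $\theta_1=\Theta_1(w)$, induction gives $\theta_k=\Theta_k(w)$ for $1\le k\le N$. The terminal condition $a_N=r$ then becomes the quadratic
\[
1-r=2r^2\Theta_N^2 .
\]
Its positive root is $r=2/(1+\sqrt{1+8\Theta_N^2})\in(0,1)$, and so $\eta=r^2/w_x$ equals the stated scalar.

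\emph{Step 3: verify that this $\eta$ is a risk coefficient.} I will reverse the argument. Fix $\eta$ as stated and set $r=r_{\eta,w}$. Define $\hat a_k:=1-2r^2\Theta_k^2$; then $\hat a_N=r$ by the choice of $r$. Run the backward recurrence from $a_N=r$. By backward induction, $a_k=\hat a_k$: at each step $1-a_k=2r^2\Theta_k^2>0$, so $c_k=2r\Theta_k$ is well defined, and the identity $\Theta_{k-1}^2=\Theta_k^2-\Theta_k$ holds because $\Theta_k$ is the positive root of $t^2-t=\Theta_{k-1}^2$. Hence the sequence is well defined through $k=1$.

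For ordering, note that $\Theta_{k+1}>\Theta_k>0$, since $\Theta_{k+1}^2-\Theta_{k+1}=\Theta_k^2$ forces $\Theta_{k+1}>\Theta_k$. Therefore $1>a_1>\cdots>a_N=r>0$ and $b_1=r>0$, which gives $r\in\mathcal O_N(0)$. Finally, $a_1=1-2r^2\Theta_1^2$ together with the Step 1 computation yields the matching equation \eqref{E:mixed-matching-equation}. Also $\eta>0$, so $\eta$ is a risk coefficient in the sense of Definition~\ref{D:risk-coefficient}.

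The only delicate point is choosing the substitution that turns the square-root recurrence into the $\Theta$ recursion with the correct index direction: $\Theta_1$ sits at the initial end, through the matching equation, and $\Theta_N$ at the horizon. I will also check the sign choices, namely the positive roots and $a_k<1$, when inverting $\theta_{k-1}^2=\theta_k^2-\theta_k$; the rest is direct substitution.
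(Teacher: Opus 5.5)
Your proposal is correct and follows the paper's proof in its verification step. Both arguments use $b_k\equiv r$ at $q=0$, derive $a_k=1-2r^2\Theta_k(w)^2$ by backward induction from $a_N=r$ via $\Theta_k^2-\Theta_k=\Theta_{k-1}^2$, deduce ordering from the monotonicity of $\Theta_k$, and check the matching equation using $\eta=r^2/w_x$; your Steps~1--2 additionally show how the formula is discovered by running the recurrence forward from the matching equation, which the paper omits and which the proof itself does not need.
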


\begin{proof}
Fix $N\geq1$, put
\[
    T:=\frac{1+\sqrt{1+8\Theta_N(w)^2}}2,
    \qquad
    r:=T^{-1},
    \qquad
    \eta:=\frac{1}{w_xT^2},
\]
and consider the paired backward sequence associated with $r$.  At $q=0$,
the second row of~\eqref{D:sequence} gives
\[
    b_k(r)=r,\qquad 1\leq k\leq N.
\]
We claim that
\[
    a_k(r)=1-2r^2\Theta_k(w)^2,
    \qquad 1\leq k\leq N.
\]
For $k=N$, this follows from
$T^2-T=2\Theta_N(w)^2$ and $a_N(r)=r$.  For the interior indices,
$\Theta_k(w)^2-\Theta_k(w)=\Theta_{k-1}(w)^2$, so the first row
of~\eqref{D:sequence} gives
\[
\begin{aligned}
    a_{k-1}(r)
      &=a_k(r)
        +\sqrt{2\bigl(1-a_k(r)\bigr)}\,b_k(r)\\
      &=a_k(r)+2r^2\Theta_k(w)
       =1-2r^2\Theta_{k-1}(w)^2.
\end{aligned}
\]
Thus the claimed formula holds by backward induction.
The recurrence gives $\Theta_{k+1}(w)>\Theta_k(w)>0$, while
$T>\Theta_N(w)$, so
$0<a_N(r)<\cdots<a_1(r)<1$ and $b_1(r)>0$.  Hence
$r\in\mathcal O_N(0)$.

Since $\eta=r^2/w_x$, the radius
relation~\eqref{D:risk-coefficient-induced-radius} gives
$r_{\eta,w}=r$.  Using
$a_1(r)=1-2r^2\Theta_1(w)^2$ and $b_1(r)=r$ in the matching
residual~\eqref{E:mixed-matching-residual} gives
\[
\begin{aligned}
    H_w(\eta)
      &=1-a_1(r)-\eta(w_x+w_f)-\sqrt{\eta}\,b_1(r)\\
      &=r^2\left(
          2\Theta_1(w)^2-
          \frac{w_x+w_f+\sqrt{w_x}}{w_x}
        \right)=0,
\end{aligned}
\]
where the last equality is~\eqref{D:convex-weighted-theta-initial}.
Thus $\eta$ is a risk coefficient for the weight $w$ at $q=0$.
\end{proof}

Once Theorem~\ref{T:intro-main} has been established, the proposition
identifies the unique risk coefficient as
\begin{equation}\label{E:convex-weighted-risk-coefficient}
    \eta_N^\star(0,w)
      =\frac{4}{w_x\left(1+\sqrt{1+8\Theta_N(w)^2}\right)^2}.
\end{equation}
Hence, for every $R>0$ and $d\geq2N+1$,
\begin{equation}\label{E:convex-weighted-exact-risk}
    \risk{N}{\mathcal P^{0,L}_{w,R}(\real^d)}
      =\frac{2LR^2}{w_x\left(1+\sqrt{1+8\Theta_N(w)^2}\right)^2}.
\end{equation}

At the distance vertex $w=w^x$, the convex weighted coefficient recovers the
standard OGM recurrence.  Indeed,
\eqref{D:convex-weighted-theta-initial} gives $\Theta_1(w^x)=1$.  Writing
$\theta_0:=1$ and $\theta_k:=\Theta_{k+1}(w^x)$ for $1\leq k<N$, and
$\theta_N:=(1+\sqrt{1+8\theta_{N-1}^2})/2$, the preceding formula gives
\begin{equation}\label{E:convex-distance-exact}
    \risk{N}{\finstances{d}}=\frac{L\Rx^2}{2\theta_N^2}.
\end{equation}
This recovers the exact convex value of~\cite{Drori2017Exact,KimFessler2016}.

\subsection{The balanced slice}
\label{A:sqrtq-specialization}

For $q\in(0,1)$, Lemma~\ref{L:sqrtq-membership} shows that the radius
$\sqrt q$ is always ordered.  The induced radius
\eqref{D:risk-coefficient-induced-radius} is independent of $\eta$ precisely
on the one-dimensional slice
\[
    w(u):=(qu,\bar q u,1-u),
    \qquad 0\leq u\leq1.
\]
Thus every weight on this slice uses the same paired sequence.  As on the
convex boundary above, this reduces the specialization to a scalar recurrence
and an explicit matching calculation.

\begin{proposition}[Balanced-slice risk coefficient]
\label{P:balanced-risk-coefficient}
Let $q\in(0,1)$ and $u\in[0,1]$, and write $\bar q=1-q$.  Define
\begin{equation}\label{D:sqrtq-initial-coordinate}
    \chi_1:=\sqrt{\frac{1+\sqrt q}{1-\sqrt q}},
\end{equation}
and recursively
\begin{equation}\label{E:sqrtq-coordinate-recurrence}
    \chi_{k+1}
      :=\frac{\chi_k}{1-\sqrt q}
        +\frac{\sqrt q}{\chi_k+\sqrt{\chi_k^2+\bar q}},
    \qquad k\geq1.
\end{equation}
In particular, $\chi_{k+1}>\chi_k/(1-\sqrt q)$, so the sequence grows at
least geometrically.  Then, for every $N\geq1$, the scalar
\[
    \frac{4\bar q}
      {\left(
          \sqrt q\,\chi_N
          +\sqrt{q\chi_N^2+2\bar q u(1+\chi_N^2)}
       \right)^2}
\]
is a risk coefficient for the weight $w(u)=(qu,\bar q u,1-u)$ at the given $q\in(0,1)$.
\end{proposition}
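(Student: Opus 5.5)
The plan is to exploit that the weight $w(u)=(qu,\bar q u,1-u)$ satisfies $\bar q w_x-qw_f=\bar q q u-q\bar q u=0$. By \eqref{D:risk-coefficient-induced-radius} this gives $r_{\eta,w(u)}=\sqrt q$ for every $\eta\geq0$. Lemma~\ref{L:sqrtq-membership} then gives $r_{\eta,w(u)}\in\mathcal O_N(q)$ for all $\eta$, so the ordering requirement of Definition~\ref{D:risk-coefficient} is automatic. Moreover, the first pair $(a_1,b_1)$ does not depend on $\eta$. It remains only to solve the matching equation \eqref{E:mixed-matching-equation}, using $w_x+w_f=u$:
\[
u\,s^2+\frac{b_1}{\sqrt{\bar q}}\,s-(1-a_1)=0,\qquad s=\sqrt\eta .
\]
Since $0<a_1<1$ and $b_1>0$, this quadratic has a unique positive root, even when $u=0$. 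Rationalizing it gives
\[
\eta=\frac{4\bar q}{\Bigl(\frac{b_1}{1-a_1}+\sqrt{\frac{b_1^2}{(1-a_1)^2}+\frac{4\bar q u}{1-a_1}}\Bigr)^2}.
\]
This $\eta$ is positive and solves the matching equation, so it is a risk coefficient.

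Next I would identify $(a_1,b_1)$ using the angle parametrization from the proof of Lemma~\ref{L:sqrtq-membership}. There $a_k=\cos\theta_k$ and $b_k=\sqrt q\sin\theta_k$, with $\theta_N=\arccos\sqrt q$. Each backward step is $\theta_{k-1}=\theta_k-\phi_k$, where $\cos\phi_k=\bar q+q\cos\theta_k$ and $\phi_k\in(0,\theta_k)$. Then
\[
\frac{b_1}{1-a_1}=\sqrt q\cot(\theta_1/2),\qquad \frac{1}{1-a_1}=\frac{1+\cot^2(\theta_1/2)}{2}.
\]
Substituting these, the displayed $\eta$ becomes exactly the claimed expression, provided $\chi_N=\cot(\theta_1/2)$. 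So the proof reduces to showing $\chi_{N+1-k}=\cot(\theta_k/2)$ for $k=N,\dots,1$, by induction along the reversed index.

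For the base case, $\cot(\theta_N/2)=\sqrt{(1+\cos\theta_N)/(1-\cos\theta_N)}=\sqrt{(1+\sqrt q)/(1-\sqrt q)}=\chi_1$. The inductive step is the main obstacle. Given $\chi=\cot(\theta/2)$ and $\cos\phi=\bar q+q\cos\theta$, I must show that $\cot((\theta-\phi)/2)$ equals the right side of \eqref{E:sqrtq-coordinate-recurrence}.

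I would first compute $\tan(\phi/2)=\sqrt{(1-\cos\phi)/(1+\cos\phi)}=\sqrt{q(1-\cos\theta)/(1+\bar q+q\cos\theta)}$. Writing $\cos\theta=(\chi^2-1)/(\chi^2+1)$, this becomes $\tan(\phi/2)=\sqrt q/\sqrt{\chi^2+\bar q}$, after simplifying $1+\bar q+q\cos\theta=2(\chi^2+\bar q)/(\chi^2+1)$. Then set $t=\sqrt q/\sqrt{\chi^2+\bar q}$ and apply $\cot(\alpha-\beta)=(\cot\alpha\cot\beta+1)/(\cot\beta-\cot\alpha)$ with $\cot\beta=1/t$. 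This gives $\cot((\theta-\phi)/2)=(\chi+t)/(1-\chi t)$.

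The remaining work is algebraic: check that $(\chi+t)/(1-\chi t)$ equals $\chi/(1-\sqrt q)+\sqrt q/(\chi+\sqrt{\chi^2+\bar q})$. I would do this by rationalizing with $\sqrt{\chi^2+\bar q}$ and using $(\sqrt{\chi^2+\bar q}-\chi)(\sqrt{\chi^2+\bar q}+\chi)=\bar q=(1-\sqrt q)(1+\sqrt q)$. If this identity fails as written, I would recheck the index orientation and the branch of $\phi$ before proceeding.

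Finally, the growth remark $\chi_{k+1}>\chi_k/(1-\sqrt q)$ is immediate from \eqref{E:sqrtq-coordinate-recurrence}, since the second term there is positive.
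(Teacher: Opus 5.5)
Your proof is correct and takes essentially the paper's route. Your half-angle variable $\chi=\cot(\theta/2)$ is exactly the paper's parametrization $a_{N+1-k}=(\chi_k^2-1)/(\chi_k^2+1)$, $b_{N+1-k}=2\sqrt q\,\chi_k/(1+\chi_k^2)$ of the $r=\sqrt q$ sequence, and solving the matching quadratic in $\sqrt\eta$ is the same final step. The identity you were unsure about does hold: rationalizing gives $(\chi+t)/(1-\chi t)=(\chi+\sqrt q\sqrt{\chi^2+\bar q})/\bar q$, which is the paper's intermediate form and equals the stated recurrence for $\chi_{k+1}$.
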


\begin{proof}
Fix $N\geq1$ and consider the paired backward sequence associated with
radius $\sqrt q$.  By Lemma~\ref{L:sqrtq-membership} it is ordered, and the invariant
\eqref{E:recurrence-interface-factorizations} gives
$b_k=\sqrt{q(1-a_k^2)}$.  We claim that
\begin{equation}\label{E:sqrtq-coordinate-map}
    a_{N+1-k}=\frac{\chi_k^2-1}{\chi_k^2+1},
    \qquad
    b_{N+1-k}=\frac{2\sqrt q\,\chi_k}{1+\chi_k^2},
    \qquad 1\leq k\leq N.
\end{equation}
The case $k=1$ follows from~\eqref{D:sqrtq-initial-coordinate}.  If the
first identity holds at index $k$, then substituting it and the invariant into
the first row of~\eqref{D:sequence} gives
\[
    \chi_{k+1}
      =\frac{\chi_k+\sqrt q\sqrt{\chi_k^2+\bar q}}{\bar q}.
\]
Using
\[
    \sqrt{\chi_k^2+\bar q}-\chi_k
      =\frac{\bar q}{\chi_k+\sqrt{\chi_k^2+\bar q}},
\]
this recurrence can be rewritten as
\eqref{E:sqrtq-coordinate-recurrence}; the claim follows by induction.

Set
\[
    \eta:=\frac{4\bar q}
      {\left(
          \sqrt q\,\chi_N
          +\sqrt{q\chi_N^2+2\bar q u(1+\chi_N^2)}
       \right)^2}.
\]
For $w=w(u)$, the radius relation gives $r_{\eta,w}=\sqrt q$.  Using~\eqref{E:sqrtq-coordinate-map} at $k=N$ in the
matching residual gives
\[
    H_{w(u)}(\eta)
      =\frac{2}{1+\chi_N^2}-u\eta
       -\frac{2\sqrt q\,\chi_N}{\sqrt{\bar q}(1+\chi_N^2)}\sqrt\eta.
\]
The chosen value of $\eta$ is the unique positive root of this expression,
so it is a risk coefficient for the weight $w(u)=(qu,\bar q u,1-u)$ at the given $q\in(0,1)$.
\end{proof}

Once Theorem~\ref{T:intro-main} has been established, the proposition
identifies the unique risk coefficient as
\begin{equation}\label{E:balanced-exact-coefficient}
    \eta_N^\star(q,w(u))
      =\frac{4\bar q}
      {\left(
          \sqrt q\,\chi_N
          +\sqrt{q\chi_N^2+2\bar q u(1+\chi_N^2)}
       \right)^2}.
\end{equation}
Hence, for every $R>0$ and $d\geq2N+1$,
\begin{equation}\label{E:balanced-exact-risk}
    \risk{N}{\mathcal P^{qL,L}_{w(u),R}(\real^d)}
      =\frac{2L\bar q R^2}
      {\left(
          \sqrt q\,\chi_N
          +\sqrt{q\chi_N^2+2\bar q u(1+\chi_N^2)}
       \right)^2}.
\end{equation}

\subsection{ITEM-f rate correspondence}
\label{A:itemf-correspondence}

On the initial function-value axis, the exact risk coefficient agrees with the
contraction factor of ITEM-f.  For the comparison, it is enough to recall the
scalar coefficient relations of ITEM-f.  To avoid collision with the paired
sequence, denote the ITEM-f coefficients called $(a_k,b_k)$ in
\cite[Lemma~4]{KimRyuDasGupta2026} by $(\alpha_k,\beta_k)$, let
$\Upsilon_N$ denote its contraction parameter, and put
$R_N:=\sqrt{\Upsilon_N^2-1}$.  The terminal relations are
\begin{equation}\label{E:gap-itemf-terminal-relations}
    \alpha_N=\Upsilon_N-\sqrt q\,R_N,
    \qquad
    \beta_N=\sqrt{\bar q}\,R_N.
\end{equation}
For $1\leq k<N$, define
\[
    \widehat d_k:=1-\frac{q\alpha_{k+1}}{\Upsilon_N},
    \qquad
    \widehat s_k:=\sqrt{1-\widehat d_k^2}.
\]
The ITEM-f coordinate relations can then be written in the rotation form
\begin{equation}\label{E:gap-itemf-coordinate-relations}
\begin{aligned}
    \alpha_k
      &=(\widehat d_k+q)\alpha_{k+1}-\widehat s_k\beta_{k+1},\\
    \beta_k
      &=\widehat s_k(\alpha_{k+1}-\Upsilon_N)
        +\widehat d_k\beta_{k+1}.
\end{aligned}
\end{equation}
These are equivalent to the coordinate relations of
\cite[Appendix~B.2]{KimRyuDasGupta2026}.  At the initial boundary, the same
construction gives
\begin{equation}\label{E:gap-itemf-boundary-relation}
    \beta_1
      =\sqrt{\frac{\bar q}{q}}\,(\Upsilon_N\alpha_1-1).
\end{equation}

\begin{proposition}[ITEM-f rate correspondence]
\label{P:gap-correspondence}
Let $N\geq1$ and $q\in(0,1)$.  Then
\begin{equation}\label{E:gap-itemf-risk}
    \eta_N^\star(q,w^f)=\Upsilon_N^{-2}.
\end{equation}
Consequently, the ITEM-f upper guarantee of
\cite[Section~4.2]{KimRyuDasGupta2026} is the exact finite-horizon minimax
risk on the initial function-value axis.
\end{proposition}

\begin{proof}
Set
\[
    \eta^\circ:=\Upsilon_N^{-2},
    \qquad
    r^\circ:=\sqrt{q(1-\eta^\circ)},
\]
and consider the paired backward sequence
$((a_k(r^\circ),b_k(r^\circ)))_{k=1}^N$.

\emph{Coefficient identification.}
We claim that, for $1\leq k\leq N$,
\begin{equation}\label{E:gap-itemf-coefficient-map}
    a_k(r^\circ)=1-\frac{\alpha_k}{\Upsilon_N},
    \qquad
    b_k(r^\circ)=\frac{\sqrt q\,\beta_k}{\Upsilon_N}.
\end{equation}
At $k=N$,~\eqref{E:gap-itemf-terminal-relations} gives
\[
    1-\frac{\alpha_N}{\Upsilon_N}
      =\frac{\sqrt q\,R_N}{\Upsilon_N}=r^\circ,
    \qquad
    \frac{\sqrt q\,\beta_N}{\Upsilon_N}
      =\sqrt{\bar q}\,r^\circ,
\]
so~\eqref{E:gap-itemf-coefficient-map} holds at the terminal index.  Now
suppose it holds at $k+1$.  The definitions of the two recurrences give
\[
    d_{k+1}=\widehat d_k,
    \qquad
    c_{k+1}=\frac{\widehat s_k}{\sqrt q}.
\]
Indeed, the first identity follows directly from
$a_{k+1}(r^\circ)=1-\alpha_{k+1}/\Upsilon_N$, and the second follows by
squaring and using~\eqref{D:backward-coupling}.  Substituting
\eqref{E:gap-itemf-coordinate-relations} into the rescaling
\eqref{E:gap-itemf-coefficient-map} therefore gives
\[
\begin{aligned}
    1-\frac{\alpha_k}{\Upsilon_N}
      &=d_{k+1}\left(1-\frac{\alpha_{k+1}}{\Upsilon_N}\right)
        +c_{k+1}\frac{\sqrt q\,\beta_{k+1}}{\Upsilon_N},\\
    \frac{\sqrt q\,\beta_k}{\Upsilon_N}
      &=-qc_{k+1}\left(1-\frac{\alpha_{k+1}}{\Upsilon_N}\right)
        +d_{k+1}\frac{\sqrt q\,\beta_{k+1}}{\Upsilon_N}.
\end{aligned}
\]
These are exactly the two rows of~\eqref{D:sequence}.  Backward induction
proves~\eqref{E:gap-itemf-coefficient-map}.  The order and positivity
properties in \cite[Lemma~4]{KimRyuDasGupta2026} therefore give
$r^\circ\in\mathcal O_N(q)$.

\emph{Matching.}
The boundary relation~\eqref{E:gap-itemf-boundary-relation}, together with
\eqref{E:gap-itemf-coefficient-map}, gives
\[
    \bar q\eta^\circ
      +\sqrt{\bar q\eta^\circ}\,b_1(r^\circ)
      =\frac{\bar q}{\Upsilon_N^2}
       +\frac{\sqrt{q\bar q}}{\Upsilon_N^2}\beta_1
      =\bar q\bigl(1-a_1(r^\circ)\bigr).
\]
Since $(r^\circ)^2=r_{\eta^\circ,w^f}^2$, division by $\bar q$ shows that
$\eta^\circ$ satisfies the matching equation for $w^f$.  Hence
$\eta^\circ$ is a risk coefficient for $w^f$, and uniqueness
proves~\eqref{E:gap-itemf-risk}.  The final assertion follows from
Theorem~\ref{T:intro-main}.
\end{proof}

\bibliographystyle{abbrvnat}
\bibliography{ITEM-exact-v1}

\end{document}